\documentclass[gdplain]{geradwp}

%% %%%%%%%%%%%%%%%%%%%%%%%%%%%%%%%%%%%%%%%%%%%%%%%%%
%% %%% Packages par défaut du cahier  %%%%%%%%%%%%%%
%% %%% ----- NE PAS MODIFIER! ------- %%%%%%%%%%%%%%
%% %%% Working paper default packages %%%%%%%%%%%%%%
%% %%% ----- DO NOT MODIFY! --------- %%%%%%%%%%%%%%
%% %%%%%%%%%%%%%%%%%%%%%%%%%%%%%%%%%%%%%%%%%%%%%%%%%

\PassOptionsToPackage{hyphens}{url}

\usepackage[english]{babel}
\usepackage[square,numbers,comma]{natbib}%Gestion de la bibliographie
\usepackage{lastpage}% Numérote les pages par rapport à la dernière (forme k/n)
\usepackage{xspace}%   Commande \xspace, créant un espace insécable
\usepackage{fancyhdr}% Contrôle efficace des en-têtes
\usepackage{geometry}% Contrôle complet des dimensions sur la page
\usepackage{setspace}% Gestion des espaces horizontaux, interlignes, etc
\usepackage{array}%    Gestion des tabular, table, array, etc. plus précise
\usepackage{multirow}% Multi-lignes dans un tableau
\usepackage[utf8]{inputenc}%Encodage des accents dans le code source
\usepackage[T1]{fontenc}%Encodage des accents dans le PDF
\usepackage{lmodern}%  Police plus complète que celle de LaTeX
\usepackage[dvipsnames]{xcolor}
\usepackage{hyperref}% Gestion des références internes dans le document
\usepackage{caption}%  Gestion des références internes entre éléments
\usepackage{enumitem}% Gestion des itemize
\usepackage{graphicx}% Gestion des commandes \includegraphics
\usepackage{float}%    Commande [H] (float) dans l'emplacement des images
\usepackage{amsmath}%  Commandes vitales pour écrire des maths
\usepackage{amsthm}%   Environnement «théorème»
\usepackage{dsfont}%   Divers caractères en version «maths»
\usepackage{amssymb}%  Gère certains caractères mathématiques dans le PDF
\usepackage{stmaryrd}% Ajoute quelques symboles utiles en logique
\usepackage{mathtools}%Ajoute d'autres symboles mathématiques
\usepackage[ruled]{algorithm}%Commandes d'écriture d'algorithmes
\usepackage[noend]{algpseudocode}%Algos style Python, sans les lignes "End"
\usepackage{textcomp}% Divers symboles génériques (\perthousand, etc.)
\usepackage{gensymb}%  Divers symboles génériques (\ohm, \degree, etc.)
\usepackage{eurosym}%  Ajout des commandes \EUR{}, \EUR et \euro
\usepackage[backgroundcolor=yellow,linecolor=orange,textsize=scriptsize]{todonotes}%To-Do

%% %%%%%%%%%%%%%%%%%%%%%%%%%%%%%%%%%%%%%%%%%%%%%%%%%
%% %%% Options par défaut du cahier  %%%%%%%%%%%%%%%
%% %%% ----- NE PAS MODIFIER! ------ %%%%%%%%%%%%%%%
%% %%% Working paper default options %%%%%%%%%%%%%%%
%% %%% ----- DO NOT MODIFY! -------- %%%%%%%%%%%%%%%
%% %%%%%%%%%%%%%%%%%%%%%%%%%%%%%%%%%%%%%%%%%%%%%%%%%

\GDcoverpagewhitespace{6.8cm}
\hypersetup{colorlinks,%
citecolor={blue}, % Change for "black" with natbib
urlcolor={blue},
linkcolor={blue},
breaklinks={true}
}

%% Algorithm caption customizations
\makeatletter
\ifthenelse{\isundefined{\ALG@name}}{}%
{%
\renewcommand{\ALG@name}{\sffamily\footnotesize Algorithm}
}
\makeatother
%% Algorithm2e caption customizations
\ifthenelse{\isundefined{\SetAlCapNameFnt}}{}%
{% 
\SetAlCapNameFnt{\footnotesize}
\SetAlCapFnt{\sffamily\footnotesize}
}

%% %%%%%%%%%%%%%%%%%%%%%%%%%%%%%%%%%%%%%%%%%%%%%%%%%
%% %%%%%% Début - commandes de l'auteur %%%%%%%%%%%%
%% %%%%%%%% Start of author commands %%%%%%%%%%%%%%%
%% %%%%%%%%%%%%%%%%%%%%%%%%%%%%%%%%%%%%%%%%%%%%%%%%%

\bibliographystyle{plain}%                Citations au format [1] dans le texte
% Guillemets anglais
% Commande \guillemets renvoyant vers la bonne langue

% Écrit du texte dans une couleur choisie
% \newcommand{\col}[2]{#2}
%      Commande pour écrire du texte en rouge
%    Commande pour écrire du texte en bleu
%  Commande pour écrire du texte en vert
% \newcommand{\gray}[1]{\col{darkgray}{#1}}%    Commande pour écrire du texte en gris

\newcommand{\comment}[1]{}%               Ignore des paragraphes par le compilateur
%           Annule aisément la commande "comment"
\newcommand{\refbis}[2]{\ref{#1}.\ref{#1:#2}}
\newcolumntype{R}[1]{>{\raggedleft\arraybackslash}p{#1}}% Nouveau format de colonne (largeur spécifiée, texte centré à droite)
\newcolumntype{M}[1]{>{\centering\arraybackslash}m{#1}}% Nouveau format de colonne (largeur spécifiée, texte centré horizontal et vertical)

\newcommand{\Acal}{\mathcal{A}}  
\newcommand{\Bcal}{\mathcal{B}}  
  
\newcommand{\Dcal}{\mathcal{D}}  
\newcommand{\Ecal}{\mathcal{E}}

\newcommand{\Hcal}{\mathcal{H}}

\newcommand{\Kcal}{\mathcal{K}}  
  
\newcommand{\Mcal}{\mathcal{M}}  
\newcommand{\Ncal}{\mathcal{N}}  \newcommand{\Nbb}{\mathbb{N}}
\newcommand{\Ocal}{\mathcal{O}}  \newcommand{\Obb}{\mathbb{O}}
 \newcommand{\Pbf}{\mathbf{P}} 
  \newcommand{\Qbb}{\mathbb{Q}}
\newcommand{\Rcal}{\mathcal{R}}  \newcommand{\Rbb}{\mathbb{R}}
\newcommand{\Scal}{\mathcal{S}}  \newcommand{\Sbb}{\mathbb{S}}
\newcommand{\Tcal}{\mathcal{T}}  
\newcommand{\Ucal}{\mathcal{U}}

\newcommand{\Xcal}{\mathcal{X}}  
  
  \newcommand{\Zbb}{\mathbb{Z}}

% Définition d'un nom de solveur
\newcommand{\algoname}[1]{\texttt{#1}\xspace}%   Définition d'un nom d'algo
\newcommand{\dfo}{\algoname{DFO}}%       DFO
\newcommand{\dsm}{\algoname{DSM}}%       DSM
\newcommand{\rdsm}{\algoname{rDSM}}%     rDSM
\newcommand{\cdsm}{\algoname{cDSM}}%     cDSM
%     MADS
\newcommand{\discomads}{\algoname{DiscoMads}}% DiscoMADS
\def\poll{\algoname{poll}}
\def\covering{\algoname{covering}}
\def\revealing{\algoname{revealing}}
\def\search{\algoname{search}}
\def\update{\algoname{update}}

\algrenewcommand{\textproc}{\textit}% Noms des algorithmes en italique
\algrenewcommand{\Return}{\State\algorithmicreturn\xspace}% Écriture propre d'un Return
\algnewcommand{\Not}[1]{\textbf{not}(#1)}
\algnewcommand{\And}{\textbf{and}}

\newcommand{\defequal}{\triangleq}%                 Égal par définition
%       Écriture compacte d'une somme
%     Écriture compacte d'un produit
\renewcommand{\int}{\integ}%                        Ré-affectation de la commande "intégrale"
%     Écriture compacte d'une intégrale
\DeclareMathOperator*{\argmin}{argmin}%             Commande unifiée pour argmin
\DeclareMathOperator*{\argmax}{argmax}%             Commande unifiée pour argmax
\DeclareMathOperator*{\minimize}{minimize}%         Opérateur "minimize" en optimisation
%         Opérateur "maximize" en optimisation
\newcommand{\1}{\mathds{1}}%                        Indicatrice
% Valeur absolue
\newcommand{\norm}[1]{\left\lVert{#1}\right\rVert}% Norme
\newcommand{\textabs }[1]{\lvert{#1}\rvert}%        Valeur absolue compacte
\newcommand{\textnorm}[1]{\lVert{#1}\rVert}%        Norme compacte
%Partie entière
%  Partie plafond
%           Dérivée
%Dérivée partielle
%                         Exponentielle sous forme e^x
%Racine p-ème
\newcommand{\transpose}[1]{#1^\top}%                Opérateur de transposition de matrice
%             Opérateur de transinversion de matrice
%      Opérateur «grand O»
%          Opérateur «petit o»
%Limite
% Loi normale
\def\ll{\llbracket}% Borne gauche d'un intervalle d'entiers : [[
\def\rr{\rrbracket}% Borne droite d'un intervalle d'entiers : ]]
\renewcommand{\int}{\mathrm{int}}% Intérieur d'un ensemble
\newcommand{\cl}{\mathrm{cl}}%     Fermeture d'un ensemble
\def\partitioncup{\sqcup}%         Union disjointe
\newcommand{\round}{\mathrm{round}}%Arrondi
%   Argument d'un vecteur 2D
\newcommand{\dist}{\mathrm{dist}}%  Autre notation pour une distance
%Cardinal
\newcommand{\pointplusset}[2]{\{#1\}+#2}% Somme de Minkowski singleton + ensemble
% Négation du symbole \implies
\newcommand{\DcalS}{\Dcal_{\tt{S}}} \newcommand{\TcalS}{\Tcal_{\tt{S}}} \newcommand{\tS}{t_{\tt{S}}} 
\newcommand{\DcalC}{\Dcal_{\tt{C}}} \newcommand{\TcalC}{\Tcal_{\tt{C}}} \newcommand{\tC}{t_{\tt{C}}} \newcommand{\dC}{d_{\tt{C}}}
\newcommand{\DcalP}{\Dcal_{\tt{P}}} \newcommand{\TcalP}{\Tcal_{\tt{P}}} \newcommand{\tP}{t_{\tt{P}}} 
 \newcommand{\TcalR}{\Tcal_{\tt{R}}}  
\newcommand{\DcalO}{\Dcal_{\tt{O}}} \newcommand{\TcalO}{\Tcal_{\tt{O}}} \newcommand{\tO}{t_{\tt{O}}} 

\newcommand{\acc}{\Acal}

\newtheorem{assumption}{Assumption}
\newtheorem{proposition}{Proposition}

\newtheorem{definition}{Definition}
\newtheorem{theorem}{Theorem}
\newtheorem{remark}{Remark}
\newtheorem{example}{Example}

\newtheorem{assertion}{Assertion}
\newtheorem{property}{Property}

\newcommand{\fct}[5]{% Exemple d'utilisation : \fct{ f }{ x }{ \Rbb }{ f(x) }{ \Rbb }
    #1:\left\{\begin{array}{ccl}
        #3 & \to     & #5\\
        #2 & \mapsto & #4
    \end{array}\right.}

\newcommand{\compactarray}[1]{\begingroup\renewcommand{\arraystretch}{1.0}\begin{array}{c}#1\end{array}\endgroup}% commande technique 
\newcommand{\problemoptimfree}[3]{\underset{\compactarray{#2}}{#1} \quad #3}

\newcommand{\overbracebelow}[2]{\underset{{\overbrace{\begin{subarray}{r}#1\end{subarray}}}}{#2}}

% Redefinition of the "subarray" environment to accept right-aligned columns
% Credits to https://tex.stackexchange.com/a/667392
\ExplSyntaxOn
\NewDocumentCommand{\subarrayfill}{mm}{\str_if_in:nnT { #1 } { #2 } { \hfil }}
\ExplSyntaxOff
\makeatletter
\renewenvironment{subarray}[1]{
  \vcenter\bgroup
  \Let@ \restore@math@cr \default@tag
  \baselineskip\fontdimen10 \scriptfont\tw@
  \advance\baselineskip\fontdimen12 \scriptfont\tw@
  \lineskip\thr@@\fontdimen8 \scriptfont\thr@@
  \lineskiplimit\lineskip
  \ialign\bgroup
    \subarrayfill{cr}{#1}% add \hfil if center or right alignment
    $\m@th\scriptstyle##$%
    \subarrayfill{cl}{#1}% add \hfil if center or left alignment
    \crcr
}{\crcr\egroup\egroup}
\makeatother

% \addtolength{\topmargin}{-14.4pt}
% \setlength{\headheight}{26.4pt}
% \setlength{\footskip}{3.60004pt}
\setlist[enumerate]{noitemsep, topsep=-\parskip}
\setlength{\textfloatsep}{5pt}
\setlength{\intextsep}{5pt}

%% %%%%%%%%%%%%%%%%%%%%%%%%%%%%%%%%%%%%%%%%%%%%%%%%%
%% %%%%%%% Fin - commandes de l'auteur %%%%%%%%%%%%%
%% %%%%%%%%% End of author commands %%%%%%%%%%%%%%%%
%% %%%%%%%%%%%%%%%%%%%%%%%%%%%%%%%%%%%%%%%%%%%%%%%%%

%% %%%%%%%%%%%%%%%%%%%%%%%%%%%%%%%%%%%%%%%%%%%%%%%%%
%% %%%%%%%%%% Métadonnées du cahier  %%%%%%%%%%%%%%%
%% %%%%%%%%%% Working paper metadata %%%%%%%%%%%%%%%
%% %%%%%%%%%%%%%%%%%%%%%%%%%%%%%%%%%%%%%%%%%%%%%%%%%
\GDtitle{Convergence towards a local minimum by direct search methods with a covering step}
\GDmonth{Janvier}{January}
\GDyear{2024}
\GDnumber{TBD}
\GDauthorsShort{B. Hamel, K. H\'ebert}
\GDauthorsCopyright{Hamel, H\'ebert}
\GDpostpubcitation{Hamel, Benoit, Karine H\'ebert (2021). \og Un exemple de citation \fg, \emph{Journal of Journals}, vol. X issue Y, p. n-m}{https://www.gerad.ca/fr}
\GDrevised{Mai}{May}{2022}

\begin{document}

% \GDcoverpage

\begin{GDtitlepage}

\begin{GDauthlist}
\GDauthitem{Pierre-Yves Bouchet \ref{affil:polymtl}\GDrefsep\ref{affil:gerad}}
\GDauthitem{Charles Audet \ref{affil:polymtl}\GDrefsep\ref{affil:gerad}}
\GDauthitem{Loïc Bourdin \ref{affil:unilim}}
\end{GDauthlist}

\begin{GDaffillist}
\GDaffilitem{affil:polymtl}{École Polytechnique de Montréal, Montr\'eal (Qc), Canada, H3T 1J4}
\GDaffilitem{affil:gerad}{GERAD, Montr\'eal (Qc), Canada, H3T 1J4}
\GDaffilitem{affil:unilim}{XLIM Research Institute, University of Limoges, France}
\end{GDaffillist}

\begin{GDemaillist}
\GDemailitem{pierre-yves.bouchet@polymtl.ca}
\GDemailitem{charles.audet@gerad.ca}
\GDemailitem{loic.bourdin@unilim.fr}
\end{GDemaillist}

\end{GDtitlepage}

%% %%%%%%%%%%%%%%%%%%%%%%%%%%%%%%%%%%%%%%%%%%%%%%%%%%%%%%%
%% %%%%%%%%% Résumés, mots-clés, remerciements %%%%%%%%%%%
%% %%%%%%% Abstract, keywords, acknowledgements %%%%%%%%%%
%% %%%%%%%%%%%%%%%%%%%%%%%%%%%%%%%%%%%%%%%%%%%%%%%%%%%%%%%

\GDabstracts

\begin{GDabstract}{Abstract}
This paper introduces a new step to the \textit{Direct Search Method} (\dsm) to strengthen its convergence analysis.
By design, this so-called \textit{\covering step} may ensure that, for all refined points of the sequence of incumbent solutions generated by the resulting \cdsm (\textit{\covering \dsm}), the set of all evaluated trial points is dense in a neighborhood of that refined point.
We prove that this additional property guarantees that all refined points are local solutions to the optimization problem.
This new result holds true even for a discontinuous objective function, under a mild assumption that we discuss in details.
We also provide a practical construction scheme for the \covering step that works at low additional cost per iteration.
Finally, we show that the \covering step may be adapted to classes of algorithms differing from the \dsm.
\paragraph{Keywords:}
Discontinuous optimization, nonsmooth optimization, derivative-free optimization, Direct Search Method, convergence, local solution.
\end{GDabstract}

\begin{GDacknowledgements}
We thank Dr.\ Solène Kojtych for her insightful comments when we were investigating the properties of the \revealing step.
She deserves credit for Theorem~\ref{theorem:convergence_cdsm} since she also foresaw that the \revealing step could strengthen the convergence analysis of \dsm.
This work was funded by Audet's NSERC Canada Discovery Grant 2020--04448.
\end{GDacknowledgements}

%% %%%%%%%%%%%%%%%%%%%%%%%%%%%%%%%%%%%%%%%%%%%%%%%%%
%% %%%%%%%%%%%%%%%% Article %%%%%%%%%%%%%%%%%%%%%%%%
%% %%%%%%%%%%%%%%%%%%%%%%%%%%%%%%%%%%%%%%%%%%%%%%%%%

\clearpage\newpage
\GDarticlestart

\section{Introduction}
\label{section:intro}

Consider the optimization problem
\begin{equation}
    \label{problem:P}
    \tag{$\Pbf$}
    \problemoptimfree{\minimize}{x \in \Rbb^n}{f(x)},
\end{equation}
where~$f: \Rbb^n \to \Rbb\cup\{+\infty\}$ is possibly discontinuous.
Few \textit{derivative-free optimization} (\dfo) methods~\cite{AuHa2017,CoScVibook,KvSe20GLobalOptim,SeKv17GlobalOptim,StPa23BookDIRECT} are studied in this context.
This paper focuses on the \textit{Direct Search Method}~(\dsm)~\cite[Part~3]{AuHa2017}, which is supported by a thorough convergence analysis in the presence of discontinuities under light assumptions about~$f$~\cite{AuBoBo22,ViCu2012}; although others methods exist, such as~\cite{CoMo98PWDisc,WeBa14GlobalDisc} which assume additional structure on~$f$ or~\cite{BaUs21OneDimDisc,StSe13GlobalOptim} relying on \textit{space-filling curves}~\cite{Ha60} to reduce the problem to a one-dimensional expression.
The~\dsm addresses Problem~\eqref{problem:P} by generating sequences~$(x^k)_{k \in \Nbb}$ of incumbent solutions and~$(\delta^k)_{k \in \Nbb}$ of poll radii, where~$(x^k,\delta^k) \in \Rbb^n \times \Rbb_+^*$ for all~$k \in \Nbb$.
The literature about \dsm extracts \textit{refining subsequences} from~$(x^k,\delta^k)_{k \in \Nbb}$ and studies their associated \textit{refined points}.
A subsequence~$(x^k,\delta^k)_{k \in K^*}$, where~$K^* \subseteq \Nbb$ is infinite, is said to be \textit{refining}~\cite{AuDe03a,AuDe2006} if all iterations indexed by~$k \in K^*$ \textit{fail} (that is,~$x^{k+1} = x^k$),~$(\delta^k)_{k \in K^*}$ converges to zero and~$(x^k)_{k \in K^*}$ converges to a limit~$x^*$ named the \textit{refined point}.
It is proved in~\cite{ViCu2012} that, for all sets~$K^* \subseteq \Nbb$ indexing a refining subsequence, the corresponding refined point~$x^*$ satisfies a necessary optimality condition expressed in term of the Rockafellar derivative~\cite{Rock80a,ViCu2012} of~$f$ at~$x^*$, provided that~$(f(x^k))_{k \in K^*}$ converges to~$f(x^*)$.
Then, our previous work~\cite{AuBoBo22} extends~\cite{ViCu2012} to ensure this last requirement under the assumption that all refining subsequences admit the same refined point.
These results are valid for two standard classes of \dsm: the \textit{mesh}-based method~\cite[Part~3]{AuHa2017} and the \textit{sufficient decrease}-based method~\cite[Section~7.7]{CoScVibook}.

This paper has two main goals and two related auxiliary ones.
The first main goal is to strengthen the above convergence analysis, via the addition of a new step to the \dsm to ensure that all refined points are local solutions.
For this purpose, we introduce the \covering \dsm (\cdsm), relying on the so-called \covering step which aims to ensure that the set of all evaluated trial points is dense in a neighborhood of any refined point.
We refer to Property~\ref{property:DCP} for details.
Then, we prove that Property~\ref{property:DCP} implies that, for all~$K^* \subseteq \Nbb$ indexing a refining subsequence with refined point denoted by~$x^*$, either~$x^* \in X \defequal \{x \in \Rbb^n: f(x) \neq +\infty\}$ is a local solution to Problem~\eqref{problem:P}, or~$x^* \notin X$ is such that~$(f(x^k))_{k \in K^*}$ converges to the infimum of~$f$ over a neighborhood of~$x^*$.
This result is formalized in Theorem~\ref{theorem:convergence_cdsm}.
The related auxiliary goal is to propose a practical construction scheme for the \covering step ensuring Property~\ref{property:DCP} by design.
This scheme fits in both the mesh-based \cdsm and the sufficient decrease-based \cdsm, and the additional cost per iteration it induces is low.
Our second main goal is to show that the \covering step is compatible with many methods, to allow for a convergence analysis close to Theorem~\ref{theorem:convergence_cdsm}.
Theorem~\ref{theorem:convergence_algo_with_covering} formalizes this study.
Our related auxiliary goal is to prove that our assumption about Problem~\eqref{problem:P}, involved in Theorems~\ref{theorem:convergence_cdsm} and~\ref{theorem:convergence_algo_with_covering}, cannot be relaxed in general and is weaker than in former work.
This paper originates from the corresponding author's PhD thesis~\cite[Chapter~4]{BouchetPhD} (in French).

Note that the \covering step generalizes the \revealing step from~\cite{AuBaKo22,AuBoBo22}.
Actually, our initial motivation was to better study this \revealing step.
Its goal in~\cite{AuBaKo22} is to reveal local discontinuities, but we observed in~\cite{AuBoBo22} that, when the so-called \revealing \dsm (\rdsm) generates a unique refined point, the \revealing step provides the density of the trial points in a neighborhood of the refined point.
However,~\cite{AuBaKo22,AuBoBo22} fail to deduce the local optimality of the refined point.
Thus, the current work originally aimed to state this property.
Yet, we eventually found that the \revealing step admits a generalization providing the density of the set of trial points around an arbitrary number of refined points.
The formalization of this generalization and the study of its properties constitute the core of the present work.
Note that we decided to change the terminology because, in comparison with the name \revealing, we believe that the name \covering better captures what this step actually does.

This paper is organized as follows.
Section~\ref{section:main_result_1} formalizes the \cdsm and states Theorem~\ref{theorem:convergence_cdsm}.
Section~\ref{section:proof} proves Theorem~\ref{theorem:convergence_cdsm}.
Section~\ref{section:comments_covering_step} provides a construction scheme for the \covering step.
Section~\ref{section:main_result_2} states and proves Theorem~\ref{theorem:convergence_algo_with_covering}.
Section~\ref{section:comments_assumption_partition} discusses our assumption about Problem~\eqref{problem:P}.
Section~\ref{section:general_comments} discusses our work and its possible extensions.
In addition, Appendix~\ref{appendix:materials} contains supplementary materials and Appendix~\ref{appendix:propositions} contains proofs of some auxiliary results.

\noindent {\bf Notation:}
We denote by~$\Sbb^n$ the unit sphere of~$\Rbb^n$.
For all~$r \in \Rbb_+^*$ and all~$x \in \Rbb^n$, we denote by~$\Bcal_r(x)$ the open ball of radius~$r$ centered at~$x$, and by~$\Bcal_r \defequal \Bcal_r(0)$.
For all~$x \in \Rbb^n$ and all~$\Scal \subseteq \Rbb^n$, we denote by~$\pointplusset{x}{\Scal} \defequal \{x+s : s \in \Scal\}$,~$f(\Scal) \defequal \{f(s) : s \in \Scal\}$ and~$\dist(x,\Scal) \defequal \inf\{\textnorm{x-s} : s \in \Scal\}$.
% For all~$\delta \in \Rbb_+^*$ and all~$\Scal \subseteq \Rbb$, we define~$\delta\Scal \defequal \{\delta s : s \in \Scal\}$.
% we write~$X = \partitioncup_{i=1}^{N} X_i$ to express that~$X = \cup_{i=1}^{N} X_i$ and~$X_i \cap X_j = \emptyset$ for all~$(i,j) \in \ll1,N\rr^2$ with~$j \neq i$.}
For all~$\Scal \subseteq \Rbb^n$,~$\Scal$ is said to be \textit{ample} if~$\Scal \subseteq \cl(\int(\Scal))$, or \textit{locally thin} if there exists~$\Ncal \subseteq \Rbb^n$ open so that~$\Scal \cap \Ncal \neq \emptyset = \int(\Scal) \cap \Ncal$.
Note that a set is either ample or locally thin (see Proposition~\ref{proposition:not_ample_equiv_locally_thin}), and that the definition of an ample set is equivalent to that of a \textit{semi-open} set introduced in~\cite{Le63SemiOpen}.
For all~$x \in \Rbb^n$ and all~$\Scal \subseteq \Rbb^n$ nonempty and closed, we denote by~$\round(x,\Scal) \defequal \argmin \dist(x,\Scal)$.
For all collections~$(\Scal_i)_{i=1}^{N}$ of subsets of~$\Rbb^n$ (with~$N \in \Nbb^* \cup \{+\infty\}$), their union is denoted by~$\partitioncup_{i=1}^{N} \Scal_i$ when the sets are pairwise disjoint.
Finally, for all~$(a,b) \in \Zbb^2$, we denote by~$\ll a,b\rr \defequal \Zbb \cap [a,b]$.

\section{Formal \covering step and convergence result of \cdsm}
\label{section:main_result_1}

This section formalizes the \cdsm as Algorithm~\ref{algo:cdsm} and its convergence analysis in Theorem~\ref{theorem:convergence_cdsm}.
Theorem~\ref{theorem:convergence_cdsm} is based on Property~\ref{property:DCP} provided by the \covering step, and on Assumption~\ref{assumption:problem} regarding the objective function~$f$.
The proof of Theorem~\ref{theorem:convergence_cdsm} follows in Section~\ref{section:proof}.

The \cdsm matches the usual \dsm in most of its aspects.
The only novelty lies in the \covering step, with its parameter~$r \in \Rbb_+^*$, detailed in Section~\ref{section:comments_covering_step}.
Our choices for the other steps and parameters follow simple instances of mesh-based \dsm~\cite[Part~3]{AuHa2017} and sufficient decrease-based \dsm~\cite[Section~7.7]{CoScVibook} respectively, but they may be designed as in many such \dsm.
We discuss the \cdsm in Remark~\ref{remark:cdsm}.

\smallskip

\begin{algorithm}[ht]
    \caption{\cdsm (\covering \dsm) solving Problem~\eqref{problem:P}.}
    \label{algo:cdsm}
    \begin{algorithmic}
    \State \textbf{Initialization}: \vspace{0.1cm}
    \State \hfill \begin{minipage}{0.96\linewidth}
        set a covering radius~$r \in \Rbb_+^*$; set the trial points history~$\Hcal^0 \defequal \emptyset$; \\
        set the incumbent solution and poll radius~$(x^0,\delta^0) \in X \times \Rbb_+^*$; set~$\underline{\delta}^0 \defequal \delta^0$; \\
        set~$\tau \in {]}0,1{[} \cap \Qbb$, and set~$\Mcal: \Rbb_+ \to 2^{\Rbb^n}$ and~$\rho: \Rbb_+ \to \Rbb_+$ according to one of
        \begin{itemize}[topsep=0pt,partopsep=0pt,noitemsep]
            \item the \textit{mesh-based \dsm}:~$\Mcal(\nu) \defequal \min\{\nu,\frac{\nu^2}{\delta^0}\}\Zbb^n$ and~$\rho(\nu) \defequal 0$ for all~$\nu \in \Rbb_+$;
            \item the \textit{sufficient decrease-based \dsm}:~$\Mcal(\nu) \defequal \Rbb^n$ and~$\rho(\nu) \defequal \min\{\nu,\frac{\nu^2}{\delta^0}\}$ for all~$\nu \in \Rbb_+$.
        \end{itemize}
    \end{minipage} \vspace{0.2cm}
    \For{$k \in \Nbb$}:
    \State
        \covering \textbf{step}:
        \State \hfill
        \begin{minipage}{0.93\linewidth}
            set~$\DcalC^k \subseteq \Mcal(\underline{\delta}^k) \cap \cl(\Bcal_r)$ nonempty and finite;
            set~$\TcalC^k \defequal \pointplusset{x^k}{\DcalC^k}$; set~$\tC^k \in \argmin f(\TcalC^k)$; \\
            if~$f(\tC^k) < f(x^k)-\rho(\underline{\delta}^k)$, then set~$t^k \defequal \tC^k$ and~$\TcalS^k = \TcalP^k \defequal \emptyset$ and skip to the \update step;
        \end{minipage}
    \vspace{0.1cm}
    \State
        \search \textbf{step}:
        \State \hfill
        \begin{minipage}{0.93\linewidth}
            set~$\DcalS^k \subseteq \Mcal(\underline{\delta}^k)$ empty or finite;
            if~$\TcalS^k \defequal \pointplusset{x^k}{\DcalS^k}$ is nonempty, then set~$\tS^k \in \argmin f(\TcalS^k)$; \\
            if also~$f(\tS^k) < f(x^k)-\rho(\underline{\delta}^k)$, then set~$t^k \defequal \tS^k$ and~$\TcalP^k \defequal \emptyset$ and skip to the \update step;
        \end{minipage}
    \vspace{0.1cm}
    \State
        \poll \textbf{step}:
        \State \hfill
        \begin{minipage}{0.93\linewidth}
            set~$\DcalP^k \subseteq \Mcal(\underline{\delta}^k) \cap \cl(\Bcal_{\delta^k})$ a positive basis of~$\Rbb^n$;
            set~$\TcalP^k \defequal \pointplusset{x^k}{\DcalP^k}$;
            set~$\tP^k \in \argmin f(\TcalP^k)$; \\
            if~$f(\tP^k) < f(x^k)-\rho(\underline{\delta}^k)$, then set~$t^k \defequal \tP^k$, otherwise set~$t^k \defequal x^k$;
        \end{minipage}
    \vspace{0.1cm}
    \State \update \textbf{step}:
        \State \hfill
        \begin{minipage}{0.93\linewidth}
            set~$\Hcal^{k+1} \defequal \Hcal^k \cup \Tcal^k$, where~$\Tcal^k \defequal \TcalC^k \cup \TcalS^k \cup \TcalP^k$;
            set~$x^{k+1} \defequal t^k$; \\
            set~$\delta^{k+1} \defequal \frac{1}{\tau}\delta^k$ if~$x^k \neq t^k$ and~$\delta^{k+1} \defequal \tau\delta^k$ otherwise;
            set~$\underline{\delta}^{k+1} \defequal \min\{\underline{\delta}^k,\delta^{k+1}\}$.
        \end{minipage}
    \EndFor
    \end{algorithmic}
\end{algorithm}

Algorithm~\ref{algo:cdsm} generates a sequence~$(x^k,\delta^k,\Hcal^k)_{k \in \Nbb}$ from which we may define
\begin{equation*}
    \Rcal \defequal \left\{\underset{k \in K^*}{\lim} x^k : K^* \subseteq \Nbb ~\mbox{indexes a refining subsequence of}~ (x^k,\delta^k)_{k \in \Nbb}\right\}
    \qquad \mbox{and} \qquad
    \Hcal \defequal \underset{k \in \Nbb}{\cup} \Hcal^k,
\end{equation*}
and a carefully constructed \covering step ensures that these two sets satisfy the next Property~\ref{property:DCP}.
A practical construction scheme to indeed meet this property is given in Section~\ref{section:comments_covering_step/construction_schemes}.

\begin{property}[Dense covering provided by Algorithm~\ref{algo:cdsm} with a well designed \covering step]
    \label{property:DCP}
    The inclusion
    \begin{equation*}
        \Bcal_r(x^*) \subseteq \cl(\Hcal)
    \end{equation*}
    holds for all~$x^* \in \Rcal$, where~$r \in \Rbb_+^*$ is the \covering radius chosen in the \textbf{initialization} step.
\end{property}

Theorem~\ref{theorem:convergence_cdsm} also requires the following assumptions about~$f$.
Assumptions~\refbis{assumption:problem}{f_bound} and~\refbis{assumption:problem}{f_low} match usual assumptions used in the literature, while the unusual Assumption~\refbis{assumption:problem}{X} is discussed in Section~\ref{section:comments_assumption_partition}.

\begin{assumption}[on the objective function~$f$]
    \label{assumption:problem}
    The objective function~$f$ in Problem~\eqref{problem:P} is such that
    \begin{enumerate}[label=\alph*)]
        \item\label{assumption:problem:f_bound}
            $f$ is bounded below and has bounded sublevel sets;
        \item\label{assumption:problem:f_low}
            the restriction $f_{|X} : X \to \Rbb$ is lower semicontinuous;
        \item\label{assumption:problem:X}
            $X$ admits a partition~$X = \partitioncup_{i=1}^{N} X_i$ (where~$N \in \Nbb^* \cup \{+\infty\}$) such that, for all~$i \in \ll1,N\rr$,~$X_i$ is an \textit{ample continuity set of~$f$} (that is,~$X_i$ is ample and the restriction~$f_{|X_i} : X_i \to \Rbb$ is continuous).
    \end{enumerate}
\end{assumption}

\begin{theorem}
    \label{theorem:convergence_cdsm}
    Under Assumption~\ref{assumption:problem}, Algorithm~\ref{algo:cdsm} generates at least one refining subsequence and, if Property~\ref{property:DCP} holds, then, for all~$K^* \subseteq \Nbb$ indexing a refining subsequence, the corresponding refined point~$x^*$ satisfies
    \begin{equation*}
        \underset{k \in K^*}{\lim} f(x^k) \ = \ 
        \left\{\begin{array}{rl}
            \min f(\Bcal_r(x^*))         {~= f(x^*)} & \mbox{if}~ x^* \in X, \\
            \inf f(\Bcal_r(x^*)) \phantom{~= f(x^*)} & \mbox{if}~ x^* \notin X.
        \end{array}\right.
    \end{equation*}
\end{theorem}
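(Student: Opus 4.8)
The plan is to reduce the limit along any refining subsequence to a single number and then to bracket that number by $\inf f(\Bcal_r(x^*))$ from both sides, using Property~\ref{property:DCP} for the hard inequality and the ample-continuity-set structure to bridge the discontinuities of $f$.

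First I would record two structural facts. Since every successful iteration strictly lowers the incumbent value (by more than $\rho(\underline{\delta}^k)\ge0$) while every failed iteration leaves it unchanged, the sequence $(f(x^k))_{k\in\Nbb}$ is non-increasing; being bounded below by Assumption~\refbis{assumption:problem}{f_bound}, it converges to some $v^*\in\Rbb$, and therefore $\lim_{k\in K^*}f(x^k)=v^*$ for \emph{every} set $K^*$ indexing a refining subsequence. The existence of at least one such subsequence is the classical driving argument of \dsm: bounded sublevel sets keep the iterates in a compact set, and I would show $\underline{\delta}^k\to0$ by contradiction (in the sufficient-decrease instance a positive $\liminf$ forces each success to cut $f$ by at least a fixed positive amount, hence only finitely many successes; in the mesh instance the rationality of $\tau$ confines the incumbents to a lattice meeting the bounded sublevel set in finitely many points, again bounding the successes), after which an all-failing tail drives $\delta^k\to0$, a contradiction. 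Because $\underline{\delta}$ only drops at failures, the iterations attaining its successive record lows form an infinite family of failures along which $\delta^k\to0$, and compactness extracts a refined point $x^*$; moreover $\underline{\delta}^k\to0$ yields $\rho(\underline{\delta}^k)\to0$.

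Writing $m:=\inf f(\Bcal_r(x^*))$ (finite, since $f$ is bounded below and $x^k\in\Bcal_r(x^*)\cap X$ for large $k\in K^*$), the inequality $v^*\ge m$ is immediate: eventually $x^k\in\Bcal_r(x^*)$, so $f(x^k)\ge m$, and I pass to the limit. The engine for the reverse inequality is an elementary case check on the \covering/\search/\poll/\update logic, showing that no trial point can beat the incumbent by more than the forcing term, i.e. $f(t)\ge f(x^{k+1})-\rho(\underline{\delta}^k)\ge v^*-\rho(\underline{\delta}^k)$ for every $t\in\Tcal^k$. To prove $v^*\le m$, fix $y\in\Bcal_r(x^*)\cap X$; by Assumption~\refbis{assumption:problem}{X} it lies in some ample continuity set $X_i$, so I approximate $y$ by interior points $z\in\int(X_i)\cap\Bcal_r(x^*)$ with $f(z)\to f(y)$. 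Property~\ref{property:DCP} gives $\Bcal_r(x^*)\subseteq\cl(\Hcal)$; since removing a finite set from $\Hcal$ does not destroy density in an open (hence isolated-point-free) ball, in fact $\Bcal_r(x^*)\subseteq\cl\bigl(\bigcup_{k\ge N}\Tcal^k\bigr)$ for every $N$. Hence I may choose trial points $t_j\to z$ evaluated at iterations $k_j\to\infty$; for $j$ large these lie in a fixed neighborhood of $z$ contained in $\int(X_i)$, so $f(t_j)\to f(z)$ by continuity of $f_{|X_i}$. The trial-point bound then gives $f(z)=\lim_j f(t_j)\ge v^*$, and letting $z\to y$ yields $f(y)\ge v^*$; taking the infimum over $y$ gives $m\ge v^*$, so $v^*=m$, which is the claimed value when $x^*\notin X$. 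When $x^*\in X$ I additionally use lower semicontinuity (Assumption~\refbis{assumption:problem}{f_low}): from $x^k\to x^*$ in $X$, $f(x^*)\le\liminf_{k\in K^*}f(x^k)=v^*$, while $x^*\in\Bcal_r(x^*)\cap X$ forces $f(x^*)\ge m=v^*$; thus $f(x^*)=m=v^*$ and the infimum is attained at $x^*$, so $\min f(\Bcal_r(x^*))=f(x^*)$.

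The hard part is the upper bound $v^*\le m$, precisely because density of the evaluated points says nothing about function values once $f$ is discontinuous. The two devices that make it work are the decomposition of $X$ into ample continuity sets---which lets me slide from an arbitrary $y$ to nearby points where $f$ is continuous and the density of $\Hcal$ can be converted into convergence of \emph{values}---and the tail-density refinement $\Bcal_r(x^*)\subseteq\cl\bigl(\bigcup_{k\ge N}\Tcal^k\bigr)$, which guarantees the approximating trial points are evaluated arbitrarily late, so that the vanishing forcing term $\rho(\underline{\delta}^{k_j})$ cannot spoil the bound.
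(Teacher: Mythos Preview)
Your proposal is correct and follows essentially the same route as the paper: reduce to the single limit $v^*$ of the monotone sequence $(f(x^k))_{k\in\Nbb}$, get $v^*\ge\inf f(\Bcal_r(x^*))$ trivially, and for the reverse inequality combine the trial-point lower bound $f(t)\ge f(x^{k+1})-\rho(\underline{\delta}^k)$ with density of late trial points near any $y\in\Bcal_r(x^*)\cap X_i$ and continuity of $f_{|X_i}$; then finish the $x^*\in X$ case with lower semicontinuity. The only packaging differences are that the paper wraps the topological step into a dedicated lemma (its Proposition~\ref{proposition:ample_sets}\ref{proposition:ample_sets:dense_in_ample_minus_point}) yielding directly a sequence in $X_i\cap\Hcal\setminus\{y\}$ converging to $y$, whereas you do the equivalent two-stage argument (first pass to $z\in\int(X_i)$ by ampleness, then approximate $z$ by trial points); and the paper forces the iteration indices to diverge via ``infinitely many distinct values of $(y_\ell)$ versus finite $\Tcal^k$'', whereas you use the equivalent tail-density observation.
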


We conclude this section with two remarks about the \cdsm and Theorem~\ref{theorem:convergence_cdsm} respectively.

\begin{remark}
    \label{remark:cdsm}
    First, for ease of presentation, Algorithm~\ref{algo:cdsm} allows little freedom in the designs of the mesh~$\Mcal$, the sufficient decrease function~$\rho$ and the rule to update the poll radius~$\delta^k$.
    Nevertheless, their designs may follow any mesh-based \dsm~\cite[Part~3]{AuHa2017} and any sufficient decrease-based \dsm~\cite[Section~7.7]{CoScVibook}.
    A variant of Algorithm~\ref{algo:cdsm} with the generic definition of all these elements appears as Algorithm~\ref{algo:cdsm_generic} in Appendix~\ref{appendix:materials/algo}.
    Theorem~\ref{theorem:convergence_cdsm} remains valid when Algorithm~\ref{algo:cdsm} is replaced by Algorithm~\ref{algo:cdsm_generic}.
    Second, the mesh is defined in the space of directions in Algorithm~\ref{algo:cdsm}, while in the literature it is usually defined in the space of variables directly, but centered at the incumbent solution.
    This variation simplifies our presentation and is transparent.
    Third, the \covering step fits in the framework of the \search step.
    Consequently, the \cdsm inherits all the properties of the usual \dsm.
    % Its practical performance is likely similar.
    Last, we stress that the \cdsm relies on the smallest poll radius~$\underline{\delta}^k$ in addition to the current poll radius~$\delta^k$.
    This dependency in~$(\underline{\delta}^k)_{k \in \Nbb}$ is mandatory in the proof of Theorem~\ref{theorem:convergence_cdsm} (precisely, in Proposition~\ref{proposition:limit_f_inf_neighborhood}).
\end{remark}

\begin{remark}
    \label{remark:theorem}
    When Assumption~\ref{assumption:problem} holds, Property~\ref{property:DCP} ensures that all refined points are local solutions to Problem~\eqref{problem:P}, in the usual sense for those lying in~$X$ and in a generalized sense for the others.
    In practice, most \dsm usually generate exactly one refined point which is moreover a local solution to Problem~\eqref{problem:P}.
    Nevertheless, to our best knowledge, no \dsm from the literature ensures Property~\ref{property:DCP} (and thus satisfies Theorem~\ref{theorem:convergence_cdsm}).
    Then, at the time of writing, the only \dsm that is guaranteed to generate a local solution under Assumption~\ref{assumption:problem} is the \cdsm relying on the \covering step defined in Section~\ref{section:comments_covering_step}.
\end{remark}

\section{Proof of Theorem~\ref{theorem:convergence_cdsm}}
\label{section:proof}

\noindent{\bf Preliminary results.}
The \covering step is a specific \search step, thus Algorithm~\ref{algo:cdsm} inherits all the properties of the usual \dsm.
Hence, the next Proposition~\ref{proposition:refining_sequence} holds.
It is stated as~\cite[Theorem~8.1]{AuHa2017} for the mesh-based \dsm and as~\cite[Corollary~7.2]{CoScVibook} for the sufficient decrease-based \dsm.

\begin{proposition}
    \label{proposition:refining_sequence}
    Under Assumption~\refbis{assumption:problem}{f_bound}, Algorithm~\ref{algo:cdsm} generates at least one refining subsequence.
\end{proposition}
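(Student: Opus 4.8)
The plan is to exploit the observation already made in the preamble: the \covering step is a particular \search step, so Algorithm~\ref{algo:cdsm} is a bona fide mesh-based or sufficient decrease-based \dsm, and under Assumption~\refbis{assumption:problem}{f_bound} the conclusion is exactly the classical statement~\cite[Theorem~8.1]{AuHa2017} (mesh case) and~\cite[Corollary~7.2]{CoScVibook} (sufficient decrease case). The quickest route is therefore to check that Algorithm~\ref{algo:cdsm} meets the hypotheses of those theorems and invoke them. For completeness I would reconstruct the underlying argument in three steps. First I would confine the incumbents to a fixed compact set: since $\rho\ge 0$, every accepted trial point obeys $f(x^{k+1})<f(x^k)-\rho(\underline\delta^k)\le f(x^k)$ while failures leave $x^{k+1}=x^k$, so $(f(x^k))_{k\in\Nbb}$ is non-increasing and each $x^k$ lies in the sublevel set $\{x:f(x)\le f(x^0)\}$, which Assumption~\refbis{assumption:problem}{f_bound} renders bounded, hence contained in a compact set $\overline L$.

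Second, I would prove $\liminf_{k\to\infty}\delta^k=0$ by contradiction. Suppose $\delta^k\ge\delta_{\min}>0$ for all $k$; then $\underline\delta^k\ge\delta_{\min}$ too. In the sufficient decrease case this forces $\rho(\underline\delta^k)\ge\rho(\delta_{\min})>0$, so every success lowers $f$ by a fixed positive amount, and boundedness below of $f$ permits only finitely many successes. In the mesh case, $\underline\delta^k$ is non-increasing and takes only finitely many values of the form $\delta^0\tau^j\ge\delta_{\min}$, so $\Mcal(\underline\delta^k)$ is eventually a fixed lattice whose intersection with $\overline L$ is finite; since each success strictly decreases $f$ no incumbent is revisited, and again only finitely many successes occur. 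In both cases, after the last success every iteration fails, giving $\delta^{k+1}=\tau\delta^k\to 0$, contradicting $\delta^k\ge\delta_{\min}$.

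Third, I would extract the refining subsequence. Because $\liminf_k\delta^k=0$, there are infinitely many strict record indices $k_i$ with $\delta^{k_i}<\delta^j$ for all $j<k_i$, and $\delta^{k_i}\to 0$. Each inequality $\delta^{k_i}<\delta^{k_i-1}$ can only come from a failure at iteration $k_i-1$ (a success would increase the radius), so $x^{k_i}=x^{k_i-1}$ and $\delta^{k_i-1}=\delta^{k_i}/\tau\to 0$. Setting $K'=\{k_i-1:i\in\Nbb\}$ thus produces infinitely many failed iterations along which $\delta^k\to 0$; since $(x^k)_{k\in K'}\subseteq\overline L$ is bounded, a further subsequence $K^*\subseteq K'$ yields $x^k\to x^*$, which is precisely a refining subsequence.

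The main obstacle is the mesh-based instance of the second step: one must argue that a poll radius bounded away from zero pins $\Mcal(\underline\delta^k)$ to a single finite lattice inside $\overline L$. This rests on $\tau\in\Qbb$ and on $\underline\delta^k$ being non-increasing, and it is exactly the technical core of~\cite[Theorem~8.1]{AuHa2017}; it is also the reason rationality of $\tau$ is imposed in the \textbf{initialization}. The sufficient decrease variant is comparatively immediate, since the uniform lower bound on $\rho$ does all the work.
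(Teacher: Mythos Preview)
Your proposal is correct and matches the paper's approach exactly: the paper simply observes that the \covering step is a particular \search step, so Algorithm~\ref{algo:cdsm} is a genuine \dsm, and then invokes \cite[Theorem~8.1]{AuHa2017} and \cite[Corollary~7.2]{CoScVibook} without further argument. Your three-step reconstruction of those classical results is a welcome bonus that the paper does not provide; the only imprecision is that in the mesh case the finite set in which the incumbents eventually live is the \emph{translated} lattice $x^{k_0}+\Mcal(\delta_*)$ intersected with $\overline L$, not $\Mcal(\underline\delta^k)\cap\overline L$ itself, but you correctly flag this as the technical core handled in \cite[Theorem~8.1]{AuHa2017}.
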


The rest of this section relies on Proposition~\ref{proposition:limit_f_inf_neighborhood}, which shows what Property~\ref{property:DCP} provides and settles the ground for the proof of Theorem~\ref{theorem:convergence_cdsm}.
The proof of Proposition~\ref{proposition:limit_f_inf_neighborhood} involves an auxiliary topological claim that is proved in Proposition~\ref{proposition:ample_sets} in Appendix~\ref{appendix:propositions}.

\begin{proposition}\label{proposition:limit_f_inf_neighborhood}
    Under Assumption~\ref{assumption:problem}, if Algorithm~\ref{algo:cdsm} satisfies Property~\ref{property:DCP}, then, for all refined points~$x^*$, it holds that~$\lim_{k \in \Nbb} f(x^k) \leq f(x)$ for all~$x \in \Bcal_r(x^*)$.
\end{proposition}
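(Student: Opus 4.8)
The plan is to exploit the monotonicity of the incumbent values together with the density supplied by Property~\ref{property:DCP} and the ample continuity structure of Assumption~\refbis{assumption:problem}{X}. First I would observe that the sequence $(f(x^k))_{k \in \Nbb}$ is non-increasing: at a successful iteration the new incumbent strictly improves the sufficient-decrease threshold, whereas at a failing iteration $x^{k+1}=x^k$. Since $f$ is bounded below by Assumption~\refbis{assumption:problem}{f_bound}, this sequence converges; write $\ell \defequal \lim_{k \in \Nbb} f(x^k)$, so that $f(x^k) \geq \ell$ for every $k$. The goal then reduces to showing $\ell \leq f(x)$ for every $x \in \Bcal_r(x^*)$. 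For $x \notin X$ this is immediate, since $f(x)=+\infty$ while $\ell \leq f(x^0) < +\infty$ because $x^0 \in X$; so it remains to treat $x \in X$, which belongs to some ample continuity set $X_i$ of the partition.

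The crucial intermediate step is a \emph{uniform lower bound on the value of every evaluated trial point}: for all $k \in \Nbb$ and all $t \in \Tcal^k$, one has $f(t) \geq \ell - \rho(\underline{\delta}^k)$. I would prove this by inspecting a single iteration. The steps \covering, \search and \poll are executed in sequence, at most one of them triggering acceptance and then being the last one executed. If a step does not trigger acceptance, its trial-set minimizer fails the test, hence has value $\geq f(x^k) - \rho(\underline{\delta}^k)$, and since every point of a trial set dominates that minimizer, the same bound holds for all its points; as $f(x^k) \geq \ell$ this yields $f(t) \geq \ell - \rho(\underline{\delta}^k)$. If a step does trigger acceptance, its minimizer becomes $x^{k+1}$, so every point of that trial set has value $\geq f(x^{k+1}) \geq \ell$. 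Either way the claimed bound holds. I would also record that $\underline{\delta}^k \to 0$: a refining subsequence exists by Proposition~\ref{proposition:refining_sequence}, along it $\delta^k \to 0$, and $\underline{\delta}^k = \min_{j \leq k} \delta^j$ is non-increasing, so $\rho(\underline{\delta}^k) \to 0$ since $\rho(\nu) \leq \nu$.

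I would then combine density with continuity. Fix $\epsilon>0$ and choose $K$ with $\rho(\underline{\delta}^k)<\epsilon$ for all $k\geq K$. Discarding the finitely many points evaluated before iteration $K$ leaves the tail $\bigcup_{k\geq K}\Tcal^k$ still dense in $\Bcal_r(x^*)$ (removing a finite set from a set that is dense in an open subset of $\Rbb^n$ preserves density), and by the previous step all its points satisfy $f(t)\geq \ell-\epsilon$. Because $X_i$ is ample and $x\in X_i$, the auxiliary topological claim (Proposition~\ref{proposition:ample_sets}) provides interior points of $X_i$ arbitrarily close to $x$; shrinking around one of them yields an open ball $B\subseteq \int(X_i)\cap\Bcal_r(x^*)$ contained in an arbitrarily small neighborhood of $x$. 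On this ball the restriction $f_{|X_i}$, continuous at $x$, keeps $f$ below $f(x)+\epsilon$. Density of the tail then furnishes a trial point $t\in B$ evaluated at some iteration $k\geq K$; it lies in $X_i$, so $f(t)<f(x)+\epsilon$, while $f(t)\geq \ell-\epsilon$. Hence $\ell<f(x)+2\epsilon$, and letting $\epsilon\to0$ gives $\ell\leq f(x)$.

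The main obstacle is precisely the discontinuity of $f$: density alone produces trial points near $x$, but without continuity their values need not approach $f(x)$, so a naive estimate fails. The remedy, and the only delicate part of the argument, is to force the selected trial points into $\int(X_i)$, where $f$ is continuous; this is where ampleness of $X_i$ is indispensable, since it guarantees that the open set $\int(X_i)$ accumulates at $x$ and therefore carries a positive share of the dense trial history. The secondary care needed is to draw points from the tail of the history so that the sufficient-decrease slack $\rho(\underline{\delta}^k)$ is negligible, which is handled by the finite-truncation remark; note that in the mesh-based instance $\rho\equiv0$ and this slack vanishes identically, simplifying that part.
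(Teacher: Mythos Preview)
The proposal is correct and follows essentially the same approach as the paper: both use monotonicity of $(f(x^k))_{k\in\Nbb}$, the lower bound $f(t)\geq f(x^{k+1})-\rho(\underline{\delta}^k)$ for every trial point $t\in\Tcal^k$, ampleness of $X_i$ to reach $\int(X_i)$, and continuity of $f_{|X_i}$ together with Property~\ref{property:DCP}. The only cosmetic difference is packaging: the paper invokes Proposition~\refbis{proposition:ample_sets}{dense_in_ample_minus_point} to build a sequence $(y_\ell)\subset\Hcal\cap X_i\setminus\{x\}$ converging to $x$ and then argues that the evaluation indices $\kappa(\ell)\to+\infty$, whereas you truncate the history at a fixed $K$ (so that $\rho(\underline{\delta}^k)<\epsilon$), observe that the tail remains dense, and pick one trial point in a small ball inside $\int(X_i)\cap\Bcal_r(x^*)$; this sidesteps the index-tracking argument at the cost of the easy ``finite deletion preserves density in an open set'' remark.
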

\begin{proof}
    First,~$f^* \defequal \lim_{k \in \Nbb} f(x^k) \in \Rbb$ exists since~$(f(x^k))_{k \in \Nbb}$ decreases by construction and is bounded below by Assumption~\refbis{assumption:problem}{f_bound}.
    Second, let~$x^*$ be a refined point generated by Algorithm~\ref{algo:cdsm} satisfying Property~\ref{property:DCP} and take~$x \in \Bcal_r(x^*)$.
    The result holds if~$x \notin X$, so assume that~$x \in X_i$ for some~$i \in \ll1,N\rr$.
    Let~$(y_\ell)_{\ell \in \Nbb}$ converging to~$x$ with~$y_\ell \in \Bcal_r(x^*) \cap X_i  \cap \Hcal \setminus \{x\}$ for all~$\ell \in \Nbb$ (it exists by Proposition~\refbis{proposition:ample_sets}{dense_in_ample_minus_point} applied with~$\Scal_1 \defequal \Bcal_r(x^*)$,~$\Scal_2 \defequal X_i$ and~$\Scal_3 \defequal \Hcal$).
    Let~$\kappa(\ell) \defequal \min \{k \in \Nbb : y_\ell \in \Tcal^k\}$ for all~$\ell \in \Nbb$.
    Then~$(\kappa(\ell))_{\ell \in \Nbb}$ diverges to~$+\infty$, since~$\Tcal^k$ is finite for all~$k \in \Nbb$ and every subsequence of~$(y_\ell)_{\ell \in \Nbb}$ takes infinitely many values (since~$(y_\ell)_{\ell \in \Nbb}$ converges to~$x$ with~$y_\ell \neq x$ for all~$\ell \in \Nbb$).
    Moreover,~$(\underline{\delta}^{\kappa(\ell)})_{\ell \in \Nbb}$ converges to~$0$ since~$\liminf_{k \in \Nbb} \delta^k = 0$ as a consequence of Proposition~\ref{proposition:refining_sequence}.
    In addition, for all~$\ell \in \Nbb$ we have~$f(t^{\kappa(\ell)}) \geq f(x^{\kappa(\ell)}) - \rho(\underline{\delta}^{\kappa(\ell)})$ if iteration~$\kappa(\ell)$ fails and~$f(t^{\kappa(\ell)}) = f(x^{\kappa(\ell)+1})$ otherwise, and~$f(y_\ell) \geq f(t^{\kappa(\ell)})$ by construction.
    Hence, for all~$\ell \in \Nbb$ we have~$f(y_\ell) \geq f(x^{\kappa(\ell)+1}) -  \rho(\underline{\delta}^{\kappa(\ell)})$.
    The result follows by taking~$\ell \to +\infty$, since~$(f(y_\ell))_{\ell \in \Nbb}$ converges to~$f(x)$ by continuity of~$f_{|X_i}$ and~$(f(x^{\kappa(\ell)+1}))_{\ell \in \Nbb}$ converges to~$f^*$ and~$(\rho(\underline{\delta}^{\kappa(\ell)}))_{\ell \in \Nbb}$ converges to~$0$.
\end{proof}

\noindent{\bf Proof of Theorem~\ref{theorem:convergence_cdsm}.}
Consider that Assumption~\ref{assumption:problem} is satisfied.
Proposition~\ref{proposition:refining_sequence} states that at least one refining subsequence is generated.
Assume that Property~\ref{property:DCP} holds.
Let~$K^* \subseteq \Nbb$ indexing a refining subsequence,~$x^*$ denoting its refined point, and let~$f^* \defequal \lim_{k \in K^*} f(x^k) = \lim_{k \in \Nbb} f(x^k)$.
Let us show that~$f^* = \inf f(\Bcal_r(x^*))$.
We have~$f^* \geq \inf f(\Bcal_r(x^*))$ since~$x^k \in \Bcal_r(x^*)$ for all~$k \in K^*$ large enough, and~$f^* \leq \inf f(\Bcal_r(x^*))$ is proved by contradiction: if~$f^* > \inf f(\Bcal_r(x^*))$, then there exists~$x^\sharp \in \Bcal_r(x^*)$ such that~$f^* > f(x^\sharp)$, but then~$f^* > f(x^\sharp) \geq f^*$ by Proposition~\ref{proposition:limit_f_inf_neighborhood}.
Note that this already concludes the proof of the case where~$x^* \notin X$.
Now assume that~$x^* \in X$.
Then~$f^* \geq f(x^*)$ by Assumption~\refbis{assumption:problem}{f_low} and~$f^* \leq f(x^*)$ by Proposition~\ref{proposition:limit_f_inf_neighborhood}, so~$f(x^*) = f^* = \inf f(\Bcal_r(x^*)) = \min f(\Bcal_r(x^*))$.
\hfill \qed

\section{Discussion on the \covering step and Property~\ref{property:DCP}}
\label{section:comments_covering_step}

This section discusses the \covering step.
Section~\ref{section:comments_covering_step/invalid_revealing} highlights the differences between the \covering step and the \revealing step it is inspired from.
Section~\ref{section:comments_covering_step/sufficient_condition_schemes} provides a sufficient condition about a construction scheme for the \covering step to ensure Property~\ref{property:DCP}.
% This condition may be checked \textit{a priori} (that is, prior to the execution of \cdsm).
% This contrasts with a direct check that Property~\ref{property:DCP} holds, which is only possible \textit{a posteriori} (that is, after the execution of the \cdsm).
% Hence, it is presumably easier to check this condition a priori than to verify a posteriori that Property~\ref{property:DCP} holds.
Section~\ref{section:comments_covering_step/construction_schemes} provides a tractable construction scheme for the \covering step which checks this sufficient condition.

\subsection{The \revealing step provides Property~\ref{property:DCP} a posteriori in some cases only}
\label{section:comments_covering_step/invalid_revealing}

As stated in Section~\ref{section:intro}, the \covering step is inspired from the \revealing step in a \dsm that we hereafter call the \revealing \dsm (\rdsm)~\cite[Algorithm~1]{AuBoBo22}.
The \rdsm satisfies Property~\ref{property:DCP} when exactly one refined point is generated~\cite[Lemma~2]{AuBoBo22}.
Nevertheless, we show in this section that the \rdsm may fail to ensure Property~\ref{property:DCP} when more than one refined point is generated.

Expressed in our notation, the \revealing step in~\cite{AuBoBo22} relies on a sequence~$(\Dcal_\ell)_{\ell \in \Nbb}$ with~$\Dcal_\ell \subset \cl(\Bcal_r)$ finite for all~$\ell \in \Nbb$ and such that~$\Bcal_r \subseteq \cl(\cup_{\ell \in \Nbb} \Dcal_\ell)$.
For all~$k \in \Nbb$, it defines~$\DcalC^k \defequal \round(\Dcal_{\ell(k)}, \Mcal(\underline{\delta}^k))$ as the rounding of~$\Dcal_{\ell(k)}$ onto~$\Mcal(\underline{\delta}^k)$, where~$\ell(k)$ denotes the number of iterations indices~$u < k$ such that~$\underline{\delta}^{u+1} < \underline{\delta}^u$.
Thus, the \revealing step ensures that
\begin{equation*}
    \Bcal_r \subseteq \cl\left(\underset{k \in \Nbb}{\cup} \DcalC^k\right)
\end{equation*}
when the \rdsm generates at least one refining subsequence, which is certified a priori by Proposition~\ref{proposition:refining_sequence}.
Nevertheless, this property states only the dense intersection of the trial directions history with~$\Bcal_r$.
This may not lead to Property~\ref{property:DCP} when the \rdsm generates more than one refined point.
The following Example~\ref{example:revealing_multiple_refined}, illustrated in Figure~\ref{figure:revealing_fails}, confirms this observation.

\begin{example}
    \label{example:revealing_multiple_refined}
    Consider the objective function $f: \Rbb^2 \to \Rbb$ defined by~$f(x) \defequal \textnorm{x}_{\infty}$ for all~$x \in \Rbb^2$ and the algorithmic parameters
        $r \defequal 1$,
        $x^0 \defequal -3\1$ where~$\1 \defequal (1,1)$, 
        $\delta^0 \defequal 1$, 
        $\Mcal(\nu) \defequal \min\{\nu,\nu^2\}\Zbb^2$ and~$\rho(\nu) \defequal 0$ for all~$\nu \in \Rbb_+^*$, 
        and~$\tau \defequal \frac{1}{2}$.
    For simplicity, we do not increase the \poll radius in case of a successful iteration (that is, we force~$\delta^{k+1} \defequal \delta^k$ if~$x^k \neq t^k$).
    Define the \poll step as~$\DcalP^k \defequal \{(\pm\delta^k,0),(0,\pm\delta^k)\}$ for all~$k \in \Nbb$.
    Define the \revealing step such that the sequence~$(\Dcal_\ell)_{\ell \in \Nbb}$ satisfies
    \begin{equation*}
        \forall q \in \Nbb, \qquad
        \Dcal_{4q}   \subset \Rbb_+\times\Rbb_+, \quad
        \Dcal_{4q+1} \subset \Rbb_-\times\Rbb_+, \quad
        \Dcal_{4q+2} \subset \Rbb_-\times\Rbb_-, \quad
        \Dcal_{4q+3} \subset \Rbb_+\times\Rbb_-.
    \end{equation*}
    Define the \search step at each iteration~$k \in \Nbb$ so that~$\TcalS^k \defequal \emptyset$ if~$k \notin 3\Nbb$, and so that~$\TcalS^{3q} \defequal \{\tS^{3q}\}$ for all~$k = 3q \in 3\Nbb$, where
    \begin{equation*}
        \tS^{3q} \defequal \left(-1\right)^q \left(1+2^{-q}\right)\1.
    \end{equation*}
    
    This instance of \rdsm has a predictable behavior.
    Using induction, one can show that 
    \begin{equation*}
        \forall q \in \Nbb, \quad
        \left\{\begin{array}{llll}
                x^{3q\phantom{+1}}      = (-1)^{q-1}\left(1+2^{-(q-1)}\right)\1, &
                \delta^{3q\phantom{+1}} = \phantom{\frac{1}{2}}4^{-q}, &
                \DcalC^{3q\phantom{+1}} = \Dcal_{2q}, &
                \mbox{\search success,}
            \\
                x^{3q+1}                = (-1)^{q\phantom{-1}}\left(1+2^{-q}\right)\1, &
                \delta^{3q+1}           = \phantom{\frac{1}{2}}4^{-q}, &
                \DcalC^{3q+1}           = \Dcal_{2q}, &
                \mbox{iteration fails,}
            \\
                x^{3q+2}                = (-1)^{q\phantom{-1}}\left(1+2^{-q}\right)\1, &
                \delta^{3q+2}           = \frac{1}{2}4^{-q}, &
                \DcalC^{3q+2}           = \Dcal_{2q+1}, &
                \mbox{iteration fails.}
        \end{array}\right.
    \end{equation*}
    Thus, there are two refined points~$x^*_+  = \1$ and~$x^*_- = -\1$, and none is a local minimizer of~$f$.
    However, this does not contradict Theorem~\ref{theorem:convergence_cdsm} because Property~\ref{property:DCP} is not satisfied.
    Indeed, it holds that
    \begin{equation*}
        \cl(\Hcal) ~\cap~ {]} -1 , 1 {[}~ ^2 = \emptyset
        \quad \mbox{while} \quad
        \forall r \in \Rbb_+^*,~
        \Bcal_r(x^*_+) = {]}  1-r ,  1+r {[}~ ^2
        ~\mbox{and}~
        \Bcal_r(x^*_-) = {]} -1-r , -1+r {[}~ ^2.
    \end{equation*}
\end{example}

\begin{figure}[!hb]
    \centering
    \includegraphics[width=0.5\linewidth]{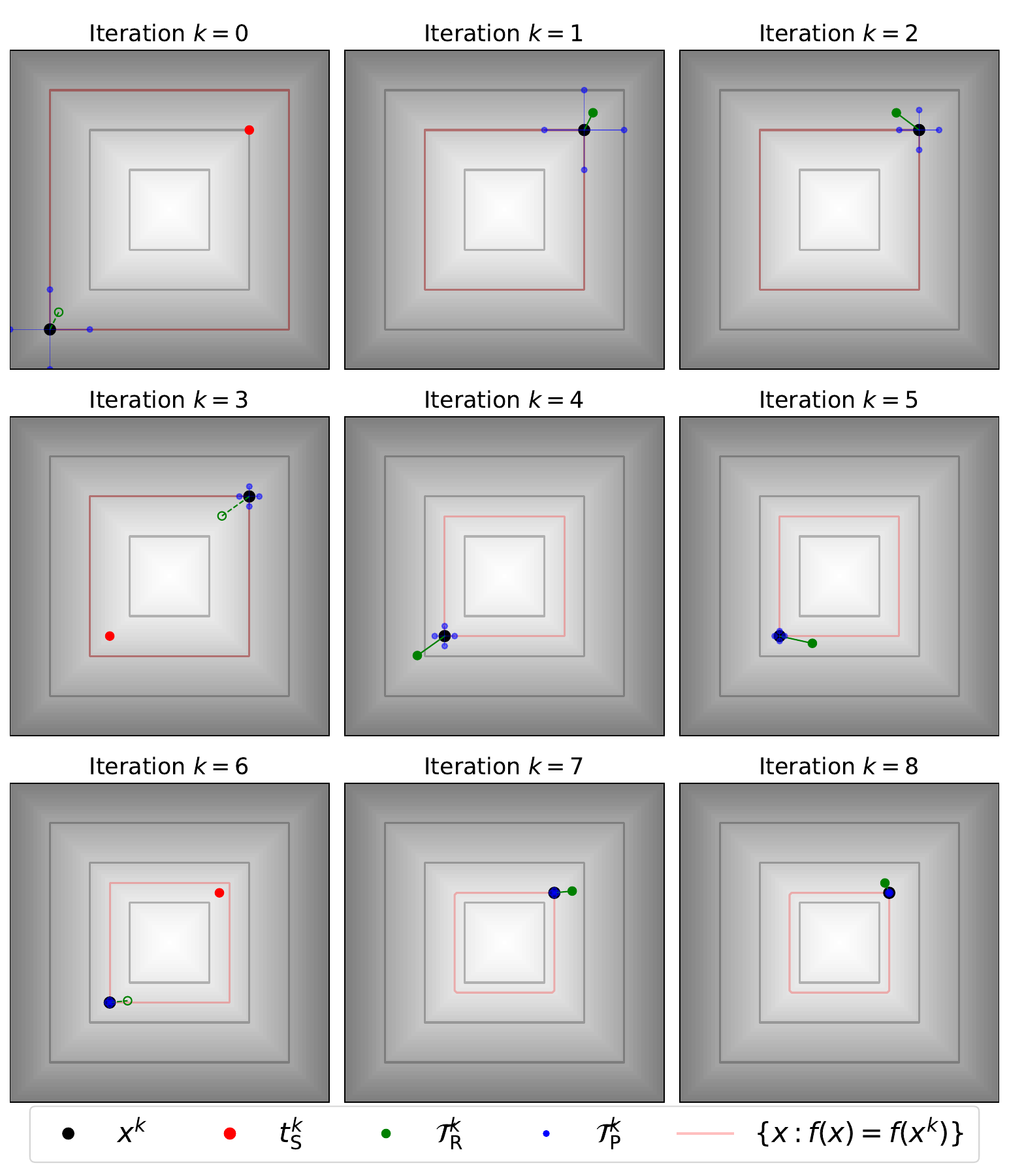}
    \caption{First eight iterations of the \rdsm in the context of Example~\ref{example:revealing_multiple_refined}. The set~$\TcalR^k$ of \revealing trial points is dotted at each iteration~$k \in 3\Nbb$ to highlight that these points are actually not evaluated by the \rdsm. Indeed, the \search step trial point~$\tS^k$ is evaluated before and makes the iteration a success.}
    \label{figure:revealing_fails}
\end{figure}

Example~\ref{example:revealing_multiple_refined} shows that the \revealing step from~\cite{AuBoBo22} focuses only on the asymptotic density of the trial directions, and that this may not translate to the density of the trial points in some neighborhoods of the refined points.
The original instance of the \revealing step provided in~\cite{AuBaKo22} differs from those in~\cite{AuBoBo22} (in~\cite{AuBaKo22}, the \revealing step draws at each iteration a direction in~$\cl(\Bcal_r)$ according to the independent uniform distribution), but it also fails to provide Property~\ref{property:DCP} when more than one refined point exists.
Accordingly, in Section~\ref{section:comments_covering_step/sufficient_condition_schemes}, we study schemes for the \covering step that ensure instead the dense intersection of the trial points with a neighborhood of all refined points.

\subsection{Sufficient condition to ensure a priori that the \cdsm satisfies Property~\ref{property:DCP}}
\label{section:comments_covering_step/sufficient_condition_schemes}

This section focuses on \covering step instances relying only, at each iteration~$k \in \Nbb$, on the current couple~$(x^k,\underline{\delta}^k)$ and the current history~$\Hcal^k \defequal \cup_{\ell < k} \Tcal^\ell$ (where~$\Tcal^\ell$ denotes the set of all trial points evaluated at iteration~$\ell \in \Nbb$).
Proposition~\ref{proposition:sufficient_condition_covering} proves that, if the \covering step relies on a \covering oracle from Definition~\ref{definition:oracle}, illustrated in Figure~\ref{figure:covering_oracle}, then all executions of the \cdsm satisfy Property~\ref{property:DCP}.

\begin{definition}[\covering oracle]
    \label{definition:oracle}
    For all~$r \in \Rbb_+^*$, a function~$\Obb: \Rbb^n \times 2^{\Rbb^n} \to 2^{\Rbb^n}$ is said to be a \textit{\covering oracle} of radius~$r$ if there exists~$\beta \in {]}0,1]$ such that, for all points~$x \in \Rbb^n$ and all sets~$\emptyset \neq \Scal \subseteq \Rbb^n$,
    \begin{equation*}
        \Obb(x,\Scal) ~\mbox{is compact}
        ~\mbox{and}~
        \emptyset \neq \Obb(x,\Scal) \subseteq \cl(\Bcal_r)
        \qquad \mbox{and} \qquad
        \dfrac
            {\max_{d \in \Obb(x,\Scal)}~ \dist\left(x+d, \Scal\right)}
            {\max_{d \in \cl(\Bcal_r)} ~ \dist\left(x+d, \Scal\right)}
        \ \geq \ \beta.
    \end{equation*}
\end{definition}

\begin{figure}[!hb]
    \centering
    \includegraphics[width=0.70\linewidth, trim=05 05 05 05, clip]{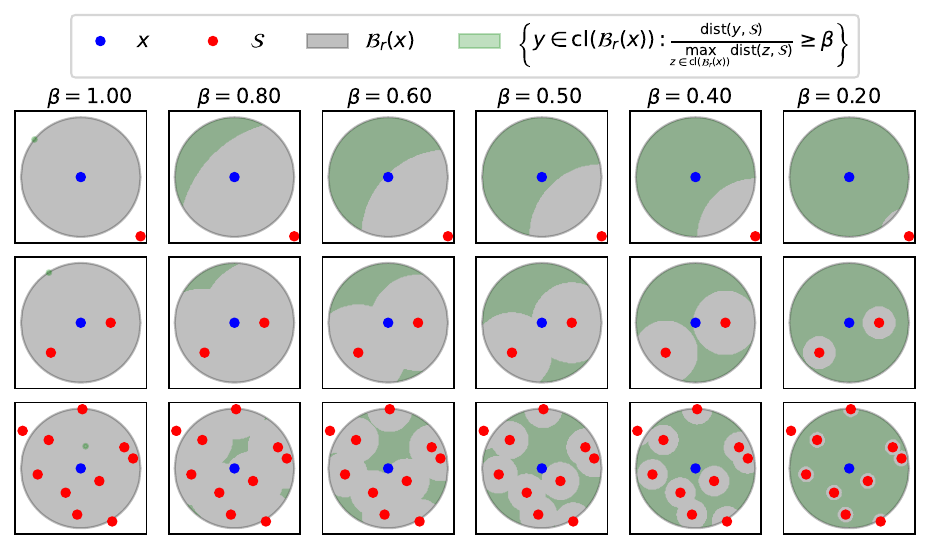}
    \caption{Illustration of Definition~\ref{definition:oracle}.
    A function~$\Obb$ satisfies Definition~\ref{definition:oracle} at~$(x,\Scal)$ for the value~$\beta$ if~$\Obb(x,\Scal)$ contains at least one direction~$d$ such that~$x+d$ lies in the set represented in green.}
    \label{figure:covering_oracle}
\end{figure}

\begin{proposition}
    \label{proposition:sufficient_condition_covering}
    For all functions~$\Obb$ satisfying Definition~\ref{definition:oracle}, if Algorithm~\ref{algo:cdsm} defines its \covering step as
    \begin{equation*}
        \forall k \in \Nbb, \qquad
        \DcalC^k \defequal \round\left( \Obb\left(x^k, \Hcal^k\right), \Mcal(\underline{\delta}^k) \right),
    \end{equation*}
    then it ensures Property~\ref{property:DCP}.
\end{proposition}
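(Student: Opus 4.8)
The plan is to argue by contradiction, exploiting the fact that whenever a genuine gap survives near a refined point, the oracle forces every covering step to probe a point that is quantifiably far from the current history. Fix a refined point $x^* \in \Rcal$ together with a set $K^* \subseteq \Nbb$ indexing a refining subsequence converging to $x^*$, and suppose toward a contradiction that Property~\ref{property:DCP} fails, so that there exists $z \in \Bcal_r(x^*)$ with $\gamma \defequal \dist(z, \cl(\Hcal)) > 0$. Since $\Hcal^k \subseteq \Hcal$ for every $k$, this gives $\dist(z, \Hcal^k) \geq \gamma$ for all $k$. Because $\norm{z - x^*} < r$ and $x^k \to x^*$ along $K^*$, for all large $k \in K^*$ the displacement $z - x^k$ lies in $\Bcal_r \subseteq \cl(\Bcal_r)$, whence
\[
    \max_{d \in \cl(\Bcal_r)} \dist(x^k + d, \Hcal^k) \ \geq \ \dist(z - x^k + x^k, \Hcal^k) \ = \ \dist(z, \Hcal^k) \ \geq \ \gamma .
\]

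First I would convert the scale-free ratio of Definition~\ref{definition:oracle} into an absolute distance bound. Applying the oracle with $\Scal \defequal \Hcal^k$ (nonempty for $k \geq 1$) produces a direction $d_k \in \Obb(x^k, \Hcal^k)$ with $\dist(x^k + d_k, \Hcal^k) \geq \beta\gamma$, where $\beta \in {]}0,1]$ is the constant from the definition. The covering step then evaluates $p_k \defequal x^k + \hat d_k$, where $\hat d_k$ is the rounding of $d_k$ onto $\Mcal(\underline{\delta}^k)$ and hence lies in $\DcalC^k$, so $p_k \in \TcalC^k \subseteq \Hcal^{k+1}$. The rounding error $\norm{\hat d_k - d_k}$ is at most the mesh coarseness $m_k$, which is $0$ in the sufficient decrease-based case and, up to a dimensional constant, $\min\{\underline{\delta}^k, (\underline{\delta}^k)^2/\delta^0\}$ in the mesh-based case. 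Since $(\delta^k)_{k \in K^*} \to 0$ by definition of a refining subsequence and $\underline{\delta}^k \leq \delta^k$, we have $\underline{\delta}^k \to 0$ along $K^*$ and thus $m_k \to 0$ along $K^*$. Therefore, for all large $k \in K^*$,
\[
    \dist(p_k, \Hcal^k) \ \geq \ \dist(x^k + d_k, \Hcal^k) - \norm{\hat d_k - d_k} \ \geq \ \beta\gamma - m_k \ \geq \ \tfrac{1}{2}\beta\gamma .
\]

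Finally I would extract a contradiction from total boundedness. For any two indices $k < k'$ in $K^*$ that are both large, the point $p_k$ was recorded into the history at iteration $k$, so $p_k \in \Hcal^{k+1} \subseteq \Hcal^{k'}$; combined with the previous bound this yields $\norm{p_{k'} - p_k} \geq \dist(p_{k'}, \Hcal^{k'}) \geq \tfrac{1}{2}\beta\gamma$. Hence the infinite family $\{p_k : k \in K^*,\ k \text{ large}\}$ is pairwise $\tfrac{1}{2}\beta\gamma$-separated. Yet each $p_k$ lies in the bounded ball $\cl(\Bcal_{r+1}(x^*))$, since $x^k \to x^*$ and $\norm{\hat d_k} \leq r + m_k$. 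An infinite $\tfrac{1}{2}\beta\gamma$-separated subset of a bounded region of $\Rbb^n$ cannot exist, which is the desired contradiction; therefore $\Bcal_r(x^*) \subseteq \cl(\Hcal)$, and Property~\ref{property:DCP} holds.

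The main obstacle is the middle step: turning the oracle's relative guarantee into an absolute bound while simultaneously controlling the vanishing rounding error. The delicate part is that the lower bound $\gamma$ on the denominator $\max_{d \in \cl(\Bcal_r)} \dist(x^k + d, \Hcal^k)$ must hold uniformly over all large $k \in K^*$ — this is exactly where both the persistence of the gap at $z$ (via $\dist(z,\Hcal^k)\geq\gamma$) and the convergence $x^k \to x^*$ (to place $z-x^k$ inside $\cl(\Bcal_r)$) are needed — and that $m_k \to 0$ must be checked along the very same subsequence so that $\beta\gamma - m_k$ stays bounded away from zero. Once these two estimates are aligned, the separated-points argument closes the proof with no further effort.
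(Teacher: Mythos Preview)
Your proof is correct and follows essentially the same approach as the paper: both hinge on showing that the covering trial points cannot form an infinite pairwise-separated subset of a bounded ball, invoking Bolzano--Weierstrass (equivalently, total boundedness) for the contradiction. The paper packages this as a direct argument---bounding $\max_{d\in\Bcal_r}\dist(x^*+d,\Hcal)$ by three vanishing terms, with the separated-sequence contradiction tucked inside the proof that $\min_{\ell<k}\norm{\tC^k-\tC^\ell}\to 0$---whereas you run the contradiction at the top level from a hypothetical uncovered point $z$; the ingredients (oracle ratio bound, vanishing mesh rounding, compact containment of the covering points) and their interplay are identical.
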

\begin{proof}
    Consider the framework of Proposition~\ref{proposition:sufficient_condition_covering}.
    Let~$x^* \in \Rcal$ and~$K^*\subseteq \Nbb$ indexing an associated refining subsequence, such that~$\max_{d \in \Rbb^n} \textnorm{d-\round(d,\Mcal(\underline{\delta}^k))} \leq r$ and~$x^k \in \cl(\Bcal_r(x^*))$ for all~$k \in K^*$.
    For all~$k \in \Nbb$, let~$d_\Obb^k \in \Obb(x^k,\Hcal^k)$ such that~$\dist(x^k+d_\Obb^k,\Hcal^k) = \max_{d \in \Obb(x^k,\Hcal^k)} \dist(x^k+d,\Hcal^k)$.
    Let also~$\dC^k \defequal \round(d_\Obb^k,\Mcal(\underline{\delta}^k)) \in \DcalC^k$ and~$\tC^k \defequal x^k+\dC^k \in \TcalC^k \subseteq \Tcal^k$.
    Observe that, when~$k \to +\infty$ in~$K^*$,
    \begin{equation*}
        \norm{x^*-x^k} \to 0, \qquad
        \norm{d_\Obb^k-\dC^k} \to 0
        \qquad \mbox{and} \qquad
        \min_{\ell < k} \norm{\tC^k-\tC^\ell} \to 0.
    \end{equation*}
    The first two claims result from~$K^*$ indexing a refining subsequence.
    The third claim is proved by contradiction.
    Assume that there exists~$K \subseteq K^*$ infinite and~$\varepsilon > 0$ such that~$\min_{\ell < k} \norm{\tC^k-\tC^\ell} \geq \varepsilon$ for all~$k \in K$.
    Then,~$\textnorm{\tC^k-\tC^\ell} \geq \varepsilon$ for all~$(k,\ell) \in K^2$ with~$\ell < k$.
    Nevertheless, for all~$k \in K$, it holds that~$\textnorm{\tC^k-x^*} = \textnorm{x^k+\dC^k+d_\Obb^k-d_\Obb^k-x^*} \leq \textnorm{x^k-x^*} + \textnorm{d_\Obb^k} + \textnorm{\dC^k-d_\Obb^k} \leq 3r$, so all elements of~$(\tC^k)_{k \in K}$ lie in the compact set~$\cl(\Bcal_{3r}(x^*))$.
    This situation contradicts the Bolzano-Weierstrass theorem applied to~$(\tC^k)_{k \in K}$.
    Then, to conclude the proof, remark that, for all~$k \in K^*$,
    \begin{eqnarray*}
        \max_{d \in \Bcal_r} \dist\left(x^*+d, \Hcal\right)
        & \leq & \max_{d \in \cl(\Bcal_r)} \dist\left(x^*+d, \Hcal^k\right) \\
        & \leq & \norm{x^*-x^k} + \max_{d \in \cl(\Bcal_r)} \dist\left(x^k+d, \Hcal^k\right) \\
        & \leq & \norm{x^*-x^k} + \frac{1}{\beta} \max_{d \in \Obb(x^k,\Hcal^k)} \dist\left(x^k+d,\Hcal^k\right) \\
        &   =  & \norm{x^*-x^k} + \frac{1}{\beta} \dist\left(x^k+d_\Obb^k,\Hcal^k\right) \\
        & \leq & \norm{x^*-x^k} + \frac{1}{\beta} \left( \norm{d_\Obb^k-\dC^k} + \dist\left(x^k+\dC^k,\Hcal^k\right) \right) \\
        &   =  & \norm{x^*-x^k} + \frac{1}{\beta}\norm{d_\Obb^k-\dC^k} + \frac{1}{\beta}\dist\left(\tC^k, \underset{\ell < k}{\cup} \Tcal^\ell\right) \\
        & \leq & \norm{x^*-x^k} + \frac{1}{\beta}\norm{d_\Obb^k-\dC^k} + \frac{1}{\beta}\min_{\ell < k} \norm{\tC^k-\tC^\ell}.
    \end{eqnarray*}
    Then, taking the limit as~$k \to +\infty$ in~$K^*$ leads to the desired result that~$\Bcal_r(x^*) \subseteq \cl(\Hcal)$.
\end{proof}

Proposition~\ref{proposition:sufficient_condition_covering} depends only on the oracle~$\Obb$ driving the \covering step, so it may be checked a priori (prior to the execution of the \cdsm).
In contrast, Property~\ref{property:DCP} must be checked a posteriori, as this requires the whole sequence~$(x^k,\delta^k,\Hcal^k)_{k \in \Nbb}$ to compute~$\Rcal$ and~$\Hcal$ and check Property~\ref{property:DCP}.
Thus, Proposition~\ref{proposition:sufficient_condition_covering} allows for a simple way to ensure a priori that Property~\ref{property:DCP} will be satisfied.
The construction scheme for the \covering step we introduce in Section~\ref{section:comments_covering_step/construction_schemes} relies on Definition~\ref{definition:oracle} and Proposition~\ref{proposition:sufficient_condition_covering}.

\subsection{Construction of a \covering step instance usable in practice}
\label{section:comments_covering_step/construction_schemes}

In practice, an instance of the \covering step must satisfy two criteria.
For theoretical consistency, the resulting trial points history~$\Hcal$ must satisfy Property~\ref{property:DCP}, and for practical efficiency, the number of \covering trial points must be small at each iteration.
This section proposes a scheme that meets these two criteria.
We follow the guideline from Section~\ref{section:comments_covering_step/sufficient_condition_schemes}, that is,~$\DcalC^k \defequal \round(\Obb\left(x^k, \Hcal^k\right), \Mcal(\underline{\delta}^k))$ at each~$k \in \Nbb$, where~$\Obb$ is a \covering oracle from Definition~\ref{definition:oracle}.
Our scheme designs a tractable expression for~$\Obb$, valid for all \covering radii~$r \in \Rbb_+^*$.

The baseline scheme we suggest for the \covering step relies on the following oracle:
\begin{equation}
    \label{oracle:scheme_covering}
    \forall x \in \Rbb^n, \quad
    \forall \Scal \subseteq \Rbb^n, \qquad
    \Obb(x,\Scal) \defequal \underset{d \in \cl(\Bcal_r)}{\argmax}~ \dist\left(x+d, \Scal\right).
\end{equation}
Oracle~\ref{oracle:scheme_covering} is a \covering oracle, as it satisfies Definition~\ref{definition:oracle} with~$\beta \defequal 1$.
Then, Proposition~\ref{proposition:sufficient_condition_covering} claims that a \covering step instance relying on Oracle~\ref{oracle:scheme_covering} ensures Property~\ref{property:DCP}.
At each~$k \in \Nbb$, this instance selects~$\DcalC^k$ as the set of all~$\dC^k \in \cl(\Bcal_r)$ such that~$\tC^k \defequal x^k+\dC^k$ is the farthest possible from the set~$\Hcal^k$ of all past trial points (or, in the case of the mesh-based \cdsm,~$\DcalC^k$ is the rounding of all these directions~$\dC^k$ onto the mesh).
Moreover, Property~\ref{property:DCP} remains valid if we actually compute only one such direction.
The computation of~$\Obb$ is costly when~$\Scal$ contains many elements, but it may be alleviated.

Computing Oracle~\ref{oracle:scheme_covering} is a continuous and piecewise affine problem, for all~$(x,\Scal) \in \Rbb^n \times 2^{\Rbb^n}$.
This problem admits smooth surrogates, such as~$\overline{\Obb}(x,\Scal) \defequal \argmax_{d \in \cl(\Bcal_r)} \Sigma_{s \in \Scal} \frac{-1}{\norm{x+d-s}}$.
Moreover, the computation of~$\Obb(x^k,\Hcal^k)$ at each iteration~$k \in \Nbb^*$ may start from the point~$\tC^{k-1}$ calculated at the preceding iteration.
Also, in the case of the mesh-based \cdsm, the set~$\Obb(x,\Scal)$ is rounded onto the mesh, so it may be more relevant to restrict the search to direction lying on the mesh directly.
For all~$(x,\Scal) \in \Rbb^n \times 2^{\Rbb^n}$ and all mesh parameters~$\nu \in \Rbb_+^*$, we may seek for the directions~$d \in \Mcal(\nu) \cap \cl(\Bcal_r)$ maximizing~$\dist(x+d,\Scal)$.
This problem is combinatorial because of the discrete nature of the mesh, but it may be solved using a \textit{distance transform algorithm} such as~\cite{Meijster2000}.
This algorithm works in a number of operations linear with the cardinality of~$\Mcal(\nu) \cap \cl(\Bcal_r)$, and most are feasible in parallel.

Oracle~\ref{oracle:scheme_covering} may be adapted to ease its computation.
Let~$\alpha \in {]}0,1]$ and define
\begin{equation}
    \label{oracle:scheme_covering_weakened}
    \Obb_\alpha(x,\Scal) \defequal \left\{
        d_\alpha \in \cl(\Bcal_r) :
        \dfrac
            {\dist\left(x+d_\alpha, \Scal\right)}
            {\underset{d \in \cl(\Bcal_r)}{\max} \dist\left(x+d, \Scal\right)}
        \geq \alpha
    \right\},
\end{equation}
for all~$(x,\Scal) \in \Rbb^n \times 2^{\Rbb^n}$ with~$\Scal \neq \emptyset$.
Oracle~\ref{oracle:scheme_covering_weakened} satisfies Definition~\ref{definition:oracle} with~$\beta \defequal \alpha$, then it is a \covering oracle.
The set~$\Obb_\alpha(x,\Scal)$ usually contains infinitely many elements, but recall that in practice we do not need to compute more than one.
A simple heuristic approach to localize such an element~$d_\alpha$ may use a grid search on a grid thin enough, or some \textit{space-filling sequences} such as the Halton sequence~\cite{Ha60}.
It is presumably easier to compute an element of the set defined by Oracle~\ref{oracle:scheme_covering_weakened} than one of the set defined by Oracle~\ref{oracle:scheme_covering}, especially when~$\alpha$ is chosen close to~$0$ and the points in~$\Scal$ are well spread into~$\cl(\Bcal_r(x))$.

Oracles~\ref{oracle:scheme_covering} and~\ref{oracle:scheme_covering_weakened} may be altered for some marginal gain, by setting~$\delta r \in \Rbb_+^*$ small and considering
\begin{equation}
    \label{oracle:scheme_covering_truncated}
    \Obb^{\delta r}(x,\Scal) \defequal \Obb\left(x, \Scal \cap \cl(\Bcal_{r+\delta r}(x))\right)
    \qquad \mbox{and} \qquad
    \Obb^{\delta r}_\alpha(x,\Scal) \defequal \Obb_\alpha\left(x, \Scal \cap \cl(\Bcal_{r+\delta r}(x))\right)
\end{equation}
respectively, for all~$x \in \Rbb^n$ and all~$\Scal \subseteq \Rbb^n$ intersecting~$\cl(\Bcal_{r+\delta r}(x))$.
These alterations reduce the number of elements to consider in the computation of the point-set distance, and they remain \covering oracles (Definition~\ref{definition:oracle} holds for~$\Obb^{\delta r}$ with~$\beta \defequal \frac{\delta r}{2r+\delta r}$, and for~$\Obb^{\delta r}_\alpha$ with~$\beta \defequal \alpha\frac{\delta r}{2r+\delta r}$).

As desired, the four schemes above ensure a priori that any \cdsm relying on them satisfies Property~\ref{property:DCP} a posteriori, and that the additional cost per iteration induced by the \covering step is small.
Indeed, our schemes construct a \covering step instance evaluating~$1$ point per iteration, while for comparison the \poll step considers at least~$n+1$ points per iteration since it relies on a positive basis of~$\Rbb^n$.
Moreover, in a blackbox context, the cost to compute Oracle~\ref{oracle:scheme_covering} or any of its alterations may be negligible anyway, since the bottleneck in this context is the cost to evaluate~$f(x)$ for any~$x \in X$ while the computation of Oracle~\ref{oracle:scheme_covering} involves no call to the objective function.

Let us stress that, in practice, we may perform a \revealing step such as in~\cite{AuBaKo22,AuBoBo22} instead of a \covering step relying on a \covering oracle.
We refer to Section~\ref{section:comments_covering_step/invalid_revealing} for details about the \revealing step.
Indeed, the \revealing step ensures Property~\ref{property:DCP} when the algorithm generates exactly one refined point~\cite[Lemma~2]{AuBoBo22}, which is the usual behavior observed in practice.
Moreover, the computational cost required by the \revealing step is almost null.
Nevertheless, despite its more expensive cost, Oracle~\ref{oracle:scheme_covering} ensures that the trial points are well spread in a neighborhood of the current incumbent solution at each iteration.
This contrasts with the \revealing step, which offers no such guarantee.

Let us discuss also the \covering radius~$r$.
All~$r \in \Rbb_+^*$ are accepted, but fine-tuning this value is a problem-dependent concern.
Two default choices when no information about Problem~\eqref{problem:P} is available are~$r \defequal \frac{\delta^0}{10}$ or~$r \defequal \delta^0$.
An extreme case occurs when~$r$ is larger than the diameter of the sublevel set of~$f$ of level~$f(x^0)$ (Assumption~\refbis{assumption:problem}{f_bound} ensures that this diameter is finite).
In this case, Theorem~\ref{theorem:convergence_cdsm} ensures that the \cdsm returns a global solution to Problem~\eqref{problem:P}.
However, an overly large value of~$r$ is impracticable.
The larger~$r$ is, the more likely the \cdsm eventually escapes poor local solutions, while in contrast, the smaller~$r$ is, the faster the \covering step covers~$\Bcal_r(x^*)$ well.
In order to ensure that all points in~$\Bcal_r(x^*)$ are at distance at most~$0 <\varepsilon \leq r$ of a \covering trial point, the \covering step must generate at least~$(\frac{r}{\varepsilon})^n$ trial points in~$\Bcal_r(x^*)$ (and space them evenly).
For a fixed value of~$\varepsilon$, this number rapidly grows as~$r$ increases.
Then, the value of~$r$ impacts the reliability of the \cdsm as follows: the \cdsm asymptotically covers any ball of radius~$r$ it visits infinitely often with smaller balls of any radius~$0 < \varepsilon \leq r$, but to do so, each of these balls must be visited during at least~$(\frac{r}{\varepsilon})^n$ iterations.
One may also consider a sequence~$(r^k)_{k \in \Nbb}$ instead of a fixed~$r$, provided that~$0 < \underline{r} \defequal \inf_{k \in \Nbb} r^k < +\infty$.
In that case, one must replace~$\Bcal_r(x^*)$ by~$\Bcal_{\underline{r}}(x^*)$ in Property~\ref{property:DCP} and Theorem~\ref{theorem:convergence_cdsm}.
% This alteration of Theorem~\ref{theorem:convergence_cdsm} is proved by a rewording of Section~\ref{section:proof} where~$\Bcal_r(x^*)$ is replaced accordingly.

\section{Formal \covering step suited to many classes of \dfo methods}
\label{section:main_result_2}

Section~\ref{section:main_result_1} states Theorem~\ref{theorem:convergence_cdsm} for the \dsm only.
Nevertheless, it is possible to preserve a similar theorem by considering many other \dfo algorithms differing from \dsm, provided that they are enhanced with a \covering step.
More precisely, a convergence result similar to Theorem~\ref{theorem:convergence_cdsm} holds when the \cdsm is replaced by any algorithm fitting in the broad algorithmic framework given as Algorithm~\ref{algo:algo_with_covering}.

\smallskip

\begin{algorithm}[ht]
    \caption{Generic \dfo algorithm with a \covering step to solve Problem~\eqref{problem:P}.}
    \label{algo:algo_with_covering}
    \begin{algorithmic}
    \State \textbf{Initialization}: \vspace{0.1cm}
    \State \hfill \begin{minipage}{0.96\linewidth}
        set a covering radius~$r \in \Rbb_+^*$ and a \covering oracle~$\Obb$ (that is,~$\Obb$ satisfying Definition~\ref{definition:oracle}); \\
        set the incumbent solution~$x^0 \in X$ and the trial points history~$\Hcal^0 \defequal \emptyset$.
    \end{minipage} \vspace{0.2cm}
    \For{$k \in \Nbb$}:
    \State
        \covering \textbf{step}:
        \State \hfill
        \begin{minipage}{0.93\linewidth}
            set~$\DcalC^k \defequal \Obb(x^k,\Hcal^k)$;
            set~$\TcalC^k \defequal \pointplusset{x^k}{\DcalC^k}$ and~$\tC^k \in \argmin f(\TcalC^k)$; \\
            if~$f(\tC^k) < f(x^k)$, then set~$t^k \defequal \tC^k$ and~$\TcalO^k \defequal \emptyset$ and skip to the \update step;
        \end{minipage}
    \vspace{0.1cm}
    \State
        \algoname{optional} \textbf{step}:
        \State \hfill
        \begin{minipage}{0.93\linewidth}
            set~$\DcalO^k \subseteq \Rbb^n$ empty or finite;
            if~$\TcalO^k \defequal \pointplusset{x^k}{\DcalO^k}$ is nonempty, then set~$\tO^k \in \argmin f(\TcalO^k)$; \\
            if~$f(\tO^k) < f(x^k)$, then set~$t^k \defequal \tO^k$, otherwise set~$t^k \defequal x^k$;
        \end{minipage}
    \vspace{0.1cm}
    \State \update \textbf{step}:
        \State \hfill
        \begin{minipage}{0.93\linewidth}
            set~$x^{k+1} \defequal t^k$;
            set~$\Hcal^{k+1} \defequal \Hcal^k \cup \TcalC^k \cup \TcalO^k$.
        \end{minipage}
    \EndFor
    \end{algorithmic}
\end{algorithm}

Algorithm~\ref{algo:algo_with_covering} generates a sequence~$(x^k,\Hcal^k)_{k \in \Nbb}$ from which we may define
\begin{equation*}
    \Acal \defequal \left\{\underset{k \in K^*}{\lim} x^k : K^* \subseteq \Nbb ~\mbox{indexes a converging subsequence of}~ (x^k)_{k \in \Nbb}\right\}
    \qquad \mbox{and} \qquad
    \Hcal \defequal \underset{k \in \Nbb}{\cup} \Hcal^k,
\end{equation*}
and an equivalent of Property~\ref{property:DCP} holds, since the \covering step follows a \covering oracle.
More precisely, it holds that~$\Bcal_r(x^*) \subset \cl(\Hcal)$ for all~$x^* \in \Acal$.

Unlike the \cdsm, Algorithm~\ref{algo:algo_with_covering} does not project the oracle trial points onto a mesh and does not discriminate the trial points via a sufficient decrease function.
Moreover, it involves no sequence of poll radii and no mandatory \poll step, so it cannot be studied via the notion of refining subsequence.
Nevertheless, its convergence analysis, formalized in Theorem~\ref{theorem:convergence_algo_with_covering}, is similar to those of the \cdsm.
Also, Theorem~\ref{theorem:convergence_algo_with_covering} is valid for the \cdsm, provided that its \covering step is defined exactly as in Algorithm~\ref{algo:algo_with_covering}.

\begin{theorem}
    \label{theorem:convergence_algo_with_covering}
    Under Assumption~\ref{assumption:problem}, all~$x^* \in \acc \neq \emptyset$ generated by Algorithm~\ref{algo:algo_with_covering} and all~$K^* \subseteq \Nbb$ such that~$(x^k)_{k \in K^*}$ converges to~$x^*$ satisfy
    \begin{equation*}
        \underset{k \in K^*}{\lim} f(x^k) \ = \ 
        \left\{\begin{array}{rl}
            \min f(\Bcal_r(x^*))         {~= f(x^*)} & \mbox{if}~ x^* \in X, \\
            \inf f(\Bcal_r(x^*)) \phantom{~= f(x^*)} & \mbox{if}~ x^* \notin X.
        \end{array}\right.
    \end{equation*}
\end{theorem}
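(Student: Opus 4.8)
The plan is to transpose the proof of Theorem~\ref{theorem:convergence_cdsm} to the setting of Algorithm~\ref{algo:algo_with_covering}, exploiting that the simplifications (no mesh projection, no sufficient decrease function~$\rho$, and convergent subsequences in place of refining subsequences) make several steps lighter rather than harder. First I would record the elementary facts. The sequence~$(f(x^k))_{k \in \Nbb}$ is non-increasing, since the \update step sets~$x^{k+1} \defequal t^k$ with~$f(t^k) < f(x^k)$ whenever a trial point is accepted and~$t^k \defequal x^k$ otherwise; being also bounded below by Assumption~\refbis{assumption:problem}{f_bound}, it converges to some~$f^* \in \Rbb$. Moreover every iterate lies in the sublevel set~$\{x \in \Rbb^n : f(x) \leq f(x^0)\}$, which is bounded by Assumption~\refbis{assumption:problem}{f_bound}, so~$(x^k)_{k \in \Nbb}$ is bounded and~$\acc \neq \emptyset$ by the Bolzano--Weierstrass theorem. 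Finally, for any~$x^* \in \acc$ and any~$K^*$ with~$(x^k)_{k \in K^*} \to x^*$, one has~$\lim_{k \in K^*} f(x^k) = f^*$ because the full sequence already converges; thus the limit in the statement depends on~$x^*$ only.

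Next I would establish the density~$\Bcal_r(x^*) \subseteq \cl(\Hcal)$ for every~$x^* \in \acc$ (the analogue of Property~\ref{property:DCP}). This follows from the argument of Proposition~\ref{proposition:sufficient_condition_covering}: choosing~$d_\Obb^k \in \Obb(x^k,\Hcal^k)$ that maximizes~$\dist(x^k+d,\Hcal^k)$ and~$\tC^k \defequal x^k + d_\Obb^k$, the covering-oracle inequality with constant~$\beta$ bounds~$\max_{d \in \cl(\Bcal_r)} \dist(x^k+d, \Hcal^k)$ by~$\tfrac{1}{\beta}\dist(\tC^k, \Hcal^k) \leq \tfrac{1}{\beta}\min_{\ell<k}\norm{\tC^k - \tC^\ell}$, and a Bolzano--Weierstrass argument forces~$\min_{\ell<k}\norm{\tC^k - \tC^\ell} \to 0$ along~$K^*$; here the rounding term of Proposition~\ref{proposition:sufficient_condition_covering} simply vanishes because Algorithm~\ref{algo:algo_with_covering} sets~$\DcalC^k \defequal \Obb(x^k,\Hcal^k)$ directly. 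Passing to the limit along~$K^*$ (using~$\norm{x^* - x^k} \to 0$) yields the inclusion.

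The core of the proof is the analogue of Proposition~\ref{proposition:limit_f_inf_neighborhood}, namely~$f^* \leq f(x)$ for every~$x \in \Bcal_r(x^*)$. I would argue exactly as there: the claim is trivial when~$x \notin X$, so suppose~$x \in X_i$; the density above together with Proposition~\refbis{proposition:ample_sets}{dense_in_ample_minus_point} (applied with~$\Scal_1 \defequal \Bcal_r(x^*)$,~$\Scal_2 \defequal X_i$,~$\Scal_3 \defequal \Hcal$) provides a sequence~$(y_\ell)_{\ell \in \Nbb} \to x$ with~$y_\ell \in \Bcal_r(x^*) \cap X_i \cap \Hcal \setminus \{x\}$. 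Setting~$\kappa(\ell) \defequal \min\{k : y_\ell \in \Tcal^k\}$, the finiteness of each~$\Tcal^k$ and the fact that~$(y_\ell)$ takes infinitely many values force~$\kappa(\ell) \to +\infty$. The key estimate~$f(y_\ell) \geq f(x^{\kappa(\ell)+1})$ then follows from a short case analysis on whether~$y_\ell \in \TcalC^{\kappa(\ell)}$ or~$y_\ell \in \TcalO^{\kappa(\ell)}$: since~$\tC^{\kappa(\ell)}$ (resp.~$\tO^{\kappa(\ell)}$) minimizes~$f$ over~$\TcalC^{\kappa(\ell)}$ (resp.~$\TcalO^{\kappa(\ell)}$) and the \update step keeps~$f(x^{\kappa(\ell)+1}) \leq f(x^{\kappa(\ell)})$, one obtains~$f(y_\ell) \geq f(x^{\kappa(\ell)+1})$ in every case, now with no~$\rho$-buffer to carry along. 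Letting~$\ell \to +\infty$ and using continuity of~$f_{|X_i}$ on the left and~$f(x^{\kappa(\ell)+1}) \to f^*$ on the right gives~$f(x) \geq f^*$.

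Finally I would conclude verbatim as in the proof of Theorem~\ref{theorem:convergence_cdsm}. The inequality~$f^* \geq \inf f(\Bcal_r(x^*))$ holds because~$x^k \in \Bcal_r(x^*)$ for~$k \in K^*$ large, and the reverse inequality follows by contradiction from the previous paragraph; this settles the case~$x^* \notin X$. When~$x^* \in X$, lower semicontinuity of~$f_{|X}$ (Assumption~\refbis{assumption:problem}{f_low}) gives~$f^* \geq f(x^*)$, while the previous paragraph gives~$f^* \leq f(x^*)$, so~$f(x^*) = f^* = \inf f(\Bcal_r(x^*)) = \min f(\Bcal_r(x^*))$. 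I expect the main obstacle to be the estimate~$f(y_\ell) \geq f(x^{\kappa(\ell)+1})$ together with~$\kappa(\ell) \to +\infty$: one must confirm that the acceptance rule of Algorithm~\ref{algo:algo_with_covering} still forces every evaluated trial point to dominate the next incumbent value (now without the buffer~$\rho$), and that each~$\Tcal^k$ is finite so that~$\kappa(\ell)$ indeed diverges. Should a covering oracle return an infinite set, one would instead witness the density through the single farthest point~$\tC^k$ per iteration, exactly as in Proposition~\ref{proposition:sufficient_condition_covering}, and then select the~$y_\ell$ among these points so that~$\kappa(\ell)$ is again forced to~$+\infty$.
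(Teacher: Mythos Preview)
Your proposal is correct and follows essentially the same route as the paper: the authors simply set~$\Mcal(\nu)\defequal\Rbb^n$, $\rho(\nu)\defequal 0$, $\underline{\delta}^k\defequal 0$, note that Assumption~\refbis{assumption:problem}{f_bound} gives~$\acc\neq\emptyset$, and then declare that the proofs of Proposition~\ref{proposition:sufficient_condition_covering}, Proposition~\ref{proposition:limit_f_inf_neighborhood} and Theorem~\ref{theorem:convergence_cdsm} go through verbatim with converging subsequences in place of refining ones. Your write-up spells these re-readings out explicitly, which is exactly what the paper leaves implicit.

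One small remark on your closing caveat. The worry that a \covering oracle may return an infinite set (so that~$\Tcal^k$ need not be finite and~$\kappa(\ell)\to+\infty$ might fail) is harmless here precisely because~$\rho\equiv 0$: your own inequality~$f(y_\ell)\geq f(x^{\kappa(\ell)+1})$ already yields~$f(y_\ell)\geq f^*$ for every~$\ell$, since~$(f(x^k))_{k\in\Nbb}$ decreases to~$f^*$, and letting~$\ell\to+\infty$ gives~$f(x)\geq f^*$ without ever invoking~$\kappa(\ell)\to+\infty$. So the divergence of~$\kappa(\ell)$, while used in the paper's Proposition~\ref{proposition:limit_f_inf_neighborhood} to kill the~$\rho(\underline{\delta}^{\kappa(\ell)})$ term, is simply not needed in Algorithm~\ref{algo:algo_with_covering}. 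Your proposed workaround (restricting the~$y_\ell$ to the single farthest points~$\tC^k$) is therefore unnecessary; it would also be delicate, since those points alone are not shown to be dense in~$\Bcal_r(x^*)\cap X_i$ --- Proposition~\ref{proposition:sufficient_condition_covering} bounds distances to~$\Hcal^k$, not to~$\{\tC^\ell:\ell<k\}$.
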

\begin{proof}
    Consider Algorithm~\ref{algo:algo_with_covering} under Assumption~\ref{assumption:problem}.
    First, Algorithm~\ref{algo:algo_with_covering} generates~$\acc \neq \emptyset$ since all elements of~$(x^k)_{k \in \Nbb}$ lie in the closure of the sublevel set of~$f$ of level~$f(x^0)$, which is compact because of Assumption~\refbis{assumption:problem}{f_bound}.
    Second, we match the notation of Section~\ref{section:proof} by denoting by~$\Mcal(\nu) \defequal \Rbb^n$ and~$\rho(\nu) \defequal 0$ for all~$\nu \in \Rbb_+$, and we set~$\underline{\delta}^k \defequal 0$ for all~$k \in \Nbb$.
    % We also let~$x^*$ be an accumulation point of~$(x^k)_{k \in \Nbb}$ and~$K^* \subseteq \Nbb$ indexing an associated converging subsequence.
    This proof consists in reading former propositions (and their proofs) to observe that they remain valid by replacing a refining subsequence by a converging one in this setting.
    The proof of Proposition~\ref{proposition:sufficient_condition_covering} requires a set~$K^* \subseteq \Nbb$ indexing a refining sequence only to ensure that~$(\underline{\delta}^k)_{k \in K^*}$ converges to~$0$, so it remains valid in our current setting.
    Then, Proposition~\ref{proposition:sufficient_condition_covering} claims that the trial points history~$\Hcal$ satisfies~$\Bcal_r(x^*) \subseteq \cl(\Hcal)$ for all~$x^* \in \Acal$.
    Similarly, the proofs of Proposition~\ref{proposition:limit_f_inf_neighborhood} and of Theorem~\ref{theorem:convergence_cdsm} remain valid for our current setting, since again a set~$K^* \subseteq \Nbb$ indexing a refining sequence is required in these proofs only to ensure that~$(\underline{\delta}^k)_{k \in K^*}$ converges to~$0$.
    Hence, Theorem~\ref{theorem:convergence_algo_with_covering} is proved by reading the proof of Theorem~\ref{theorem:convergence_cdsm} in the current setting.
\end{proof}

In practice, Algorithm~\ref{algo:algo_with_covering} allows skipping the \covering step and allows a perturbation of the \covering oracle (for example, by a projection onto a mesh).
However, the \covering step must be performed infinitely often, in the sense that for all~$x^* \in \Acal$, there must exists~$K^* \subseteq \Nbb$ such that~$(x^k)_{k \in K^*}$ converges to~$x^*$ and the \covering step is performed at all iterations~$k \in K^*$.
Moreover, the \covering oracle must not be excessively perturbed by, for example, rounding infinitely often onto a coarse mesh.
Precisely, if Algorithm~\ref{algo:algo_with_covering} relies on a positive sequence~$(\nu^k)_{k \in \Nbb}$ and sets~$\DcalC^k \defequal \round(\Obb(x^k,\Hcal^k), \Mcal(\nu^k))$ or accepts~$x^{k+1} \defequal \tC^k$ only if~$f(\tC^k) < f(x^k) - \rho(\nu^k)$ at each iteration~$k \in \Nbb$, then it is mandatory that~$(\nu^k)_{k \in \Nbb}$ decreases to~$0$.
This echoes our comment in Remark~\ref{remark:cdsm} that the \cdsm relies on the sequence of smallest poll radii~$(\underline{\delta}^k)_{k \in \Nbb}$ in addition to those of the current poll radii~$(\delta^k)_{k \in \Nbb}$.
Indeed, the mesh-based \cdsm rounds the \covering oracle into the mesh parameterized by~$(\underline{\delta}^k)_{k \in \Nbb}$ and the sufficient decrease-based \cdsm discriminates trial points by a sufficient decrease depending on~$(\underline{\delta}^k)_{k \in \Nbb}$, and (unlike the sequence~$(\delta^k)_{k \in \Nbb}$) the sequence~$(\underline{\delta}^k)_{k \in \Nbb}$ is guaranteed to converge to~$0$ under Assumption~\ref{assumption:problem}.

We conclude this section by stressing that Theorem~\ref{theorem:convergence_algo_with_covering} is applicable in wide range of situations.
Indeed, Section~\ref{section:comments_assumption_partition/tight} below shows that Assumption~\ref{assumption:problem} is quite weak, and Section~\ref{section:comments_covering_step/construction_schemes} provides an explicit scheme for the \covering oracle compatible with many \dfo algorithms.
Moreover, Theorem~\ref{theorem:convergence_algo_with_covering} shows that the \covering step is, roughly speaking, a self-sufficient algorithmic step that ensures the local optimality of all accumulation points returned by any algorithm it is fitted into.

\section{Discussion on Assumption~\refbis{assumption:problem}{X}}
\label{section:comments_assumption_partition}

This section discusses our novel assumption describing the continuity sets of~$f$, that is, Assumption~\refbis{assumption:problem}{X}.
In Section~\ref{section:comments_assumption_partition/comparison_prior_work}, we prove that Assumption~\refbis{assumption:problem}{X} is strictly weaker than similar assumptions considered in former work~\cite{AuBaKo22,AuBoBo22,ViCu2012}.
In Section~\ref{section:comments_assumption_partition/tight}, we show that Assumption~\refbis{assumption:problem}{X} is tight.

\subsection{Comparison of Assumption~\refbis{assumption:problem}{X} with similar assumptions in prior work}
\label{section:comments_assumption_partition/comparison_prior_work}

In this section, we compare Assumption~\refbis{assumption:problem}{X} to similar assumptions considered by former work~\cite{AuBaKo22,AuBoBo22,ViCu2012}.
Precisely, we show that Assumption~\refbis{assumption:problem}{X} is strictly weaker than either~\cite[Assumption~4.4]{AuBaKo22} and~\cite[Assumption~1]{AuBoBo22}.
The work~\cite{ViCu2012} is not considered since~\cite{AuBoBo22} is an extension of it.

First, let us compare Assumption~\refbis{assumption:problem}{X} to~\cite[Assumption~4.4]{AuBaKo22}, recalled below as Assumption~\ref{assumption:AuBaKo}.
We prove in Proposition~\ref{proposition:assumption_current_weaker_AuBaKo} that Assumption~\refbis{assumption:problem}{X} is strictly weaker than Assumption~\ref{assumption:AuBaKo}.

\begin{assumption}[Assumption 4.4 in~\cite{AuBaKo22}]
    \label{assumption:AuBaKo}
    There exists~$N \in \Nbb^* \cup \{+\infty\}$~nonintersecting open sets~$X_i$ such that~$\cl(X) = \cup_{i=1}^{N} \cl(X_i)$ and~$f_{|X_i}$ is continuous for all~$i \in \ll1,N\rr$ and, for all~$x \in X$, there exists~$j \in \ll1,N\rr$ such that~$x \in \cl(X_j)$ and~$f_{|X_j \cup \{x\}}$ is continuous.
\end{assumption}

\begin{proposition}
    \label{proposition:assumption_current_weaker_AuBaKo}
    Assumption~\refbis{assumption:problem}{X} is strictly weaker than Assumption~\ref{assumption:AuBaKo}.
\end{proposition}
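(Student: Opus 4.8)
The plan is to prove a two-part statement: first, that Assumption~\refbis{assumption:problem}{X} holds whenever Assumption~\ref{assumption:AuBaKo} holds (weaker), and second, that the converse fails (strictly weaker), by exhibiting an explicit counterexample. For the implication, suppose Assumption~\ref{assumption:AuBaKo} provides nonintersecting open sets~$(X_i)_{i=1}^N$ with~$\cl(X) = \cup_{i=1}^N \cl(X_i)$, each~$f_{|X_i}$ continuous, and the pointwise attachment property. The difficulty is that the~$X_i$ from Assumption~\ref{assumption:AuBaKo} are open but need not cover~$X$ exactly, nor partition it: a point~$x \in X$ may lie on the boundary of several~$\cl(X_j)$, or in none of the open sets~$X_i$. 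So I cannot simply reuse the~$X_i$ as the partition blocks. Instead, I would build a genuine partition of~$X$ from them.

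First I would set~$Y_i \defequal X_i \cap X$ for each~$i$; each~$Y_i$ is ample (being the intersection of the open set~$X_i$ with~$X$, and the attachment condition guarantees~$f$ behaves continuously on the relevant closures), and~$f_{|Y_i}$ inherits continuity from~$f_{|X_i}$. The sets~$Y_i$ need not be disjoint, so next I would disjointify them in a fixed order: define~$Z_1 \defequal Y_1$ and~$Z_i \defequal Y_i \setminus \cup_{j<i} Y_j$ for~$i \geq 2$. These~$Z_i$ are pairwise disjoint, their union is~$\cup_i Y_i$, and~$f_{|Z_i}$ stays continuous as a restriction of~$f_{|Y_i}$. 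The key technical step is to check that the~$Z_i$ remain \emph{ample} after removing the overlaps, i.e.\ that~$Z_i \subseteq \cl(\int(Z_i))$; since removing a set from an open-ish ample set can destroy ampleness, I would lean on the openness of the~$X_i$ and argue that the removed portion lies in the union of lower-indexed closed sets, so interior points survive densely — here Proposition~\ref{proposition:ample_sets} or the ample/locally-thin dichotomy from the Notation section should do the bookkeeping. Finally I must verify the union of the~$Z_i$ is all of~$X$: given~$x \in X$, the attachment clause of Assumption~\ref{assumption:AuBaKo} supplies a~$j$ with~$x \in \cl(X_j)$ and~$f_{|X_j \cup \{x\}}$ continuous, which lets me place~$x$ into the appropriate block (possibly augmenting finitely many blocks by attached boundary points while preserving ampleness and continuity), so that~$X = \partitioncup_i Z_i$ is the desired partition.

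For strictness, I would produce a function~$f$ satisfying Assumption~\refbis{assumption:problem}{X} but violating Assumption~\ref{assumption:AuBaKo}. The natural obstruction in Assumption~\ref{assumption:AuBaKo} is its requirement that the blocks be \emph{open} and that~$\cl(X)$ be covered by the~$\cl(X_i)$ while each~$x \in X$ attaches continuously to some open block. A good candidate is an~$X$ containing an ample but not-open continuity piece — for instance a half-open region or a set like a filled square with one face included — on which~$f$ is continuous, but arranged so that no decomposition into \emph{open} sets with the attachment property exists, because a boundary point of~$X$ would be forced to attach to an open block on which~$f$ has a different limit. I would take~$f$ piecewise constant or piecewise linear on two ample regions meeting along a shared face that belongs to~$X$, with the face assigned a value matching only one side; then Assumption~\refbis{assumption:problem}{X} is satisfied by the two ample pieces, but the face point cannot attach continuously to the open interior on the mismatched side, breaking Assumption~\ref{assumption:AuBaKo}.

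I expect the main obstacle to be the ampleness-preservation step in the disjointification: ensuring that~$Z_i \subseteq \cl(\int(Z_i))$ after subtracting the lower-indexed sets, and that attaching stray boundary points~$x$ does not spoil ampleness or continuity of the block receiving them. This is precisely where the openness of the~$X_i$ in Assumption~\ref{assumption:AuBaKo} is essential and must be used carefully, likely invoking Proposition~\ref{proposition:ample_sets} and the dichotomy that a set is either ample or locally thin. The counterexample, by contrast, should be routine once the right geometry is chosen; the only care needed is to confirm explicitly that \emph{every} admissible open decomposition fails the attachment condition, rather than just the most obvious one.
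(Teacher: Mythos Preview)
Your counterexample does not work. In Assumption~\ref{assumption:AuBaKo} the attachment clause requires only that \emph{some} index~$j$ exist with~$x \in \cl(X_j)$ and~$f_{|X_j \cup \{x\}}$ continuous, not that this hold for every~$j$ with~$x \in \cl(X_j)$. In your two-piece example with a shared face assigned the value of one side, every face point attaches continuously to the open interior on the \emph{matching} side, so Assumption~\ref{assumption:AuBaKo} is satisfied. The paper's counterexample is of a different nature: it takes~$X = [-1,1]\setminus\{0\}$ with countably many half-open continuity intervals accumulating at~$0$, so that~$0 \in \cl(X)$ but any open continuity set of~$f$ is trapped inside one of the open intervals~$\int(X_i)$, whence the union of their closures misses~$0$ and the covering condition~$\cl(X) = \cup_i \cl(X_i)$ fails. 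The obstruction is therefore an accumulation phenomenon at a point of~$\cl(X)\setminus X$, not a mismatch across a single face.

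For the implication, your plan carries the right idea but has two slips. First, the~$X_i$ in Assumption~\ref{assumption:AuBaKo} are already nonintersecting, so your~$Y_i = X_i \cap X = X_i$ are already disjoint and the disjointification step is vacuous; the worry about~$Z_i$ losing ampleness never arises. Second, the real work is the attachment of boundary points~$x \in X \setminus \cup_i X_i$, and here you still owe a proof that the augmented block~$Y_j \supseteq X_j$ has~$f_{|Y_j}$ continuous: a sequence in~$Y_j$ converging to~$x \in Y_j$ may consist of \emph{other} attached boundary points rather than points of~$X_j$, so the hypothesis~$f_{|X_j \cup \{x\}}$ continuous does not apply directly and a diagonal argument is needed. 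The paper handles both issues at once by defining~$I(x) \defequal \min\{i : f_{|X_i \cup \{x\}}~\text{is continuous}\}$ and setting~$Y_i \defequal \{x \in X : I(x) = i\}$; ampleness then follows from~$X_i \subseteq Y_i \subseteq \cl(X_i)$, and continuity of~$f_{|Y_i}$ is checked via the diagonal argument in Proposition~\ref{proposition:AuBaKo_implies_our_technical}.
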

\begin{proof}
    Suppose that Assumption~\ref{assumption:AuBaKo} holds.
    Denote by~$(X_i)_{i=1}^{N}$ the family it provides.
    For all~$i \in \ll1,N\rr$, let~$\cl_f(X_i) \defequal \{x \in \cl(X_i) : f_{|X_i \cup \{x\}}$ is continuous$\}$.
    Let~$I(x) \defequal \min \{i \in \ll1,N\rr : x \in \cl_f(X_i)\}$ for all~$x \in X$.
    Then let~$Y_i \defequal \{x \in X : I(x) = i\}$ for every~$i \in \ll1,N\rr$.
    Thus~$(Y_i)_{i=1}^{N}$ satisfies all the requirements in Assumption~\refbis{assumption:problem}{X} (see Proposition~\ref{proposition:AuBaKo_implies_our_technical}), so Assumption~\refbis{assumption:problem}{X} is weaker than Assumption~\ref{assumption:AuBaKo}.
    Now, to prove that Assumption~\ref{assumption:AuBaKo} is not weaker than Assumption~\refbis{assumption:problem}{X}, consider the case
    \begin{equation*}
        \fct{f}
            {x}
            {X \defequal [-1,1] \setminus \{0\}}
            {\dfrac{1}{i} ~\mbox{if}~ \textabs{x} \in \left]\dfrac{1}{i+1},\dfrac{1}{i}\right] ~\mbox{for some}~ i \in \Nbb^*.}
            {\Rbb}
    \end{equation*}
    The continuity sets of~$f$ are~$X_i \defequal [\frac{-1}{i},\frac{-1}{i+1}{[} \cup {]}\frac{1}{i+1},\frac{1}{i}]$ for all~$i \in \Nbb^*$.
    Assumption~\refbis{assumption:problem}{X} holds since~$X_i$ is ample for all~$i \in \Nbb^*$.
    Nevertheless Assumption~\ref{assumption:AuBaKo} does not hold since the continuity sets must be adapted as~$Y_i \defequal \int(X_i)$ for all~$i \in \Nbb^*$ to be open, but then~$\cl(X) = [-1,1] \neq \cup_{i=1}^{\infty} \cl(Y_i) = [-1,1] \setminus \{0\}$.
\end{proof}

Second, let us compare Assumption~\refbis{assumption:problem}{X} to~\cite[Assumption~1]{AuBoBo22}, reformulated below in Assumption~\ref{assumption:AuBoBo}.
We prove in Proposition~\ref{proposition:assumption_current_weaker_AuBoBo} that Assumption~\refbis{assumption:problem}{X} is strictly weaker than Assumption~\ref{assumption:AuBoBo}.

\begin{assumption}[Global reformulation of Assumption 1 in~\cite{AuBoBo22}]
    \label{assumption:AuBoBo}
    The set~$X$ admits a partition~$X = \partitioncup_{i=1}^{N} X_i$ (where~$N \in \Nbb^* \cup \{+\infty\}$) such that, for all~$i \in \ll1,N\rr$,~$X_i$ is a \textit{continuity set of~$f$ with the interior cone property} (that is,~$X_i$ satisfies Definition~\ref{definition:ICP} below and~$f_{|X_i} : X_i \to \Rbb$ is continuous).
\end{assumption}

\begin{definition}[Interior cone property and exterior cone property]
    \label{definition:ICP}
    A set~$\Scal \subseteq \Rbb^n$ is said to have the \textit{interior cone property} (ICP) if
    \begin{equation*}
        \forall x \in \partial\Scal, \quad
        \exists \left\{\begin{array}{ll}
            \multicolumn{2}{l}{\Ucal \subseteq \Sbb^n ~\mbox{nonempty, open in the topology induced by}~ \Sbb^n} \\
            \Kcal \defequal \Rbb_+^*\Ucal ~\mbox{the cone generated by}~ \Ucal, & \Kcal_x \defequal \pointplusset{x}{\Kcal} \\
            \Ocal ~\mbox{an open neighborhood of}~ 0 ~\mbox{in}~ \Rbb^n,        & \Ocal_x \defequal \pointplusset{x}{\Ocal}
            \end{array}\right.
        : \qquad
        \left(\Kcal_x \cap \Ocal_x\right) \subseteq \Scal.
    \end{equation*}
    Similarly,~$\Scal$ is said to have the \textit{exterior cone property} (ECP) if~$(\Rbb^n \setminus \Scal)$ has the ICP.
\end{definition}

Assumption~\ref{assumption:AuBoBo} differs from~\cite[Assumption~1]{AuBoBo22} in three aspects.
Let us stress those and argue that they do not spoil the assumption.
First, Assumption~\ref{assumption:AuBoBo} is a global statement, while~\cite[Assumption~1]{AuBoBo22} states a local property but for each~$x \in X$.
Second,~\cite[Assumption~1]{AuBoBo22} requires Lipschitz-continuity of~$f$ on each continuity set, while Assumption~\ref{assumption:AuBoBo} calls for continuity only.
Indeed~\cite{AuBoBo22} requires Lipschitz-continuity only to evaluate some generalized derivatives.
We do not consider those in the current work, so we weaken the assumption accordingly.
Third,~\cite[Assumption~1]{AuBoBo22} requires an \textit{exterior} cone property for~$X \setminus X_i$ for all~$i \in \ll1,N\rr$, while Assumption~\ref{assumption:AuBoBo} demands an \textit{interior} cone property for~$X_i$.
Moreover the exterior cone property required in~\cite[Assumption~1]{AuBoBo22} is~\cite[Definition~4.1]{ViCu2012}, which differs from Definition~\ref{definition:ICP}.
However, these two approaches are equivalent (see Proposition~\ref{proposition:new_ECP_equiv_old}).
Hence, Assumption~\ref{assumption:AuBoBo} and~\cite[Assumption~1]{AuBoBo22} differ only in the nature of the continuity of~$f$ on each continuity set.

\begin{proposition}
    \label{proposition:assumption_current_weaker_AuBoBo}
    Assumption~\refbis{assumption:problem}{X} is strictly weaker than Assumption~\ref{assumption:AuBoBo}.
\end{proposition}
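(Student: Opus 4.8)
The plan is to exploit that Assumption~\ref{assumption:AuBoBo} and Assumption~\refbis{assumption:problem}{X} impose the exact same continuity requirement on each piece of the partition, and differ only in that the former asks each piece to satisfy the interior cone property (ICP of Definition~\ref{definition:ICP}) whereas the latter only asks it to be ample. Consequently the statement splits into two independent facts. First, \emph{every set with the ICP is ample}; this immediately gives the implication Assumption~\ref{assumption:AuBoBo}~$\Rightarrow$~Assumption~\refbis{assumption:problem}{X}, because one may keep the very same partition $X = \partitioncup_{i=1}^{N} X_i$ and only needs to upgrade ``ICP'' to ``ample'' on each $X_i$ while the continuity of $f_{|X_i}$ is untouched. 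Second, a single instance satisfying Assumption~\refbis{assumption:problem}{X} but not Assumption~\ref{assumption:AuBoBo} yields the strictness.

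For the first fact I would argue directly. Let $\Scal$ have the ICP and take $x \in \Scal$. If $x \in \int(\Scal)$ then $x \in \cl(\int(\Scal))$ trivially. Otherwise $x \in \partial\Scal$, and the ICP provides a nonempty open cone $\Kcal \defequal \Rbb_+^*\Ucal$ (with $\Ucal \subseteq \Sbb^n$ open) and an open neighborhood $\Ocal$ of $0$ such that $(\pointplusset{x}{\Kcal}) \cap (\pointplusset{x}{\Ocal}) \subseteq \Scal$. The one genuinely topological point is that $\Kcal$ is open in $\Rbb^n$: since the polar map $(r,u) \mapsto ru$ is a homeomorphism from $\Rbb_+^* \times \Sbb^n$ onto $\Rbb^n \setminus \{0\}$, the cone $\Kcal$ is open in $\Rbb^n \setminus \{0\}$ and, not containing $0$, open in $\Rbb^n$. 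Hence $(\pointplusset{x}{\Kcal}) \cap (\pointplusset{x}{\Ocal})$ is a nonempty open subset of $\Scal$, so it is contained in $\int(\Scal)$; and for any fixed $u \in \Ucal$ the points $\pointplusset{x}{ru}$ belong to it for all small $r > 0$ and converge to $x$, whence $x \in \cl(\int(\Scal))$. This proves $\Scal \subseteq \cl(\int(\Scal))$, i.e. $\Scal$ is ample.

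For strictness I would take $f \equiv 0$ on a domain $X$ that is ample but admits no partition into ICP sets, so that Assumption~\refbis{assumption:problem}{X} holds via the trivial partition $\{X\}$ (constant $f$ is continuous on any subset) while Assumption~\ref{assumption:AuBoBo} fails. A convenient choice is the one-dimensional set $X \defequal \{0\} \cup \bigcup_{n \geq 1} [\tfrac{1}{2n}, \tfrac{1}{2n-1}]$, a sequence of disjoint closed intervals separated by gaps and accumulating at $0$; alternatively one may use the planar cusp $X \defequal \{(x,y) : x \geq 0,\ 0 \leq y \leq x^2\}$. In either case $X = \cl(\int(X))$, so $X$ is ample. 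The crux is to rule out \emph{every} ICP partition, not merely the natural one: in any partition the point $0$ lies in some piece $P \subseteq X$; since $\int(P) \subseteq \int(X)$ and $0 \notin \int(X)$, we have $0 \in \partial P$, so the ICP would force a cone piece $(\pointplusset{0}{\Kcal}) \cap \Ocal_0 \subseteq P \subseteq X$. But every nonempty open cone at $0$ spans a whole arc of directions, each of which leaves $X$ arbitrarily close to $0$ (a gap between intervals in the $1$-D example, or the sub-parabolic pinching $y \leq x^2$ in the cusp), so no such cone piece fits inside $X$. Hence the piece containing $0$ cannot have the ICP, and this holds for every partition, so Assumption~\ref{assumption:AuBoBo} fails.

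The main obstacle is precisely this last non-existence argument, since one must show that refining the partition cannot repair the defect; I handle it by tracking the single bad point $0$ and observing that wherever it is placed, its enclosing piece inherits the tangential pinching of $X$ and therefore cannot contain an open cone. The only other delicate ingredient is the openness of $\Kcal$ in $\Rbb^n$, which is the lemma's one real topological step.
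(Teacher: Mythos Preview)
Your proof is correct and follows the same two-step plan as the paper: first show that the ICP implies ampleness via the openness of the cone~$\Kcal$ (using the polar homeomorphism, exactly as in the paper's Proposition~\ref{proposition:AuBoBo_implies_our_technical}), then exhibit an ample domain with a cusp at which no open cone can fit. The paper's counterexample is~$\mathrm{epi}(\sqrt{\textabs{\cdot}}) \subseteq \Rbb^2$ and its proof of Proposition~\ref{proposition:assumption_current_weaker_AuBoBo} merely says ``the result follows directly''; your counterexamples (the accumulating intervals in~$\Rbb$ and the parabolic cusp in~$\Rbb^2$) are different but rely on the same pinching mechanism, and you are more explicit than the paper in ruling out \emph{every} partition, by tracking the bad point into whichever piece contains it and noting that this piece inherits the obstruction from~$X$.
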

\begin{proof}
    For all~$\Scal \subseteq \Rbb^n$, if~$\Scal$ has the ICP, then~$\Scal$ is ample, but the reciprocal implication may fail (see Proposition~\ref{proposition:AuBoBo_implies_our_technical}).
    The result follows directly.
\end{proof}

\subsection{Tightness of Assumption~\refbis{assumption:problem}{X}}
\label{section:comments_assumption_partition/tight}

In this section, we show that Assumption~\refbis{assumption:problem}{X} is tight, in the sense that the conclusion of Theorem~\ref{theorem:convergence_algo_with_covering} may not hold if Assumption~\refbis{assumption:problem}{X} is not satisfied.

\begin{proposition}
    \label{proposition:assumption_current_tight}
    If Assumption~\refbis{assumption:problem}{X} does not hold, then the conclusion of Theorem~\ref{theorem:convergence_algo_with_covering} may not hold.
\end{proposition}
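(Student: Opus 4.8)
The plan is to establish this tightness result by exhibiting an explicit counterexample: an objective function~$f$ that satisfies Assumptions~\refbis{assumption:problem}{f_bound} and~\refbis{assumption:problem}{f_low} but violates Assumption~\refbis{assumption:problem}{X}, together with a legitimate instance of Algorithm~\ref{algo:algo_with_covering} (a valid \covering oracle, a starting point, an empty \algoname{optional} step) whose accumulation point contradicts the conclusion of Theorem~\ref{theorem:convergence_algo_with_covering}. The mechanism I want to exploit is the one underlying Proposition~\ref{proposition:limit_f_inf_neighborhood}: ampleness of the continuity sets is exactly what lets the dense trial history~$\Hcal$ reach the low values of~$f$ through continuity. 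A \emph{locally thin} continuity set carrying a strictly lower value is therefore invisible to any countable trial set that avoids it, and this is what I will turn into a failure of the theorem.

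First I would construct the function on~$\Rbb$. Fix an irrational point~$p \defequal \frac{\sqrt{2}}{2} \in {]}0,1{[}$ and set~$f(x) \defequal \max\{1,\abs{x}\}$ for all~$x \neq p$, and~$f(p) \defequal 0$. Assumption~\refbis{assumption:problem}{f_bound} holds since~$f \geq 0$ and its sublevel set of level~$c$ is~$\{p\}$ for~$c \in {[}0,1{[}$ and~$\{p\} \cup {[}-c,c{]}$ for~$c \geq 1$, both bounded. Assumption~\refbis{assumption:problem}{f_low} holds because away from~$p$ the function agrees with the continuous map~$\max\{1,\abs{\cdot}\}$, and at~$p$ one has~$f(p)=0 \leq 1 = \liminf_{y \to p} f(y)$. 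To see that Assumption~\refbis{assumption:problem}{X} fails, note that the only discontinuity of~$f$ is at~$p$, so any admissible partition must place~$p$ in some piece; but any ample set containing~$p$ also contains points arbitrarily close to~$p$ in~$\cl(\int(\cdot))$, on which~$f$ jumps from values~$\geq 1$ down to~$0$, so no ample set containing~$p$ is a continuity set, while the singleton~$\{p\}$ is locally thin (by Proposition~\ref{proposition:not_ample_equiv_locally_thin}, since~$\int(\{p\}) = \emptyset$) and hence not ample.

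Next I would specify the algorithmic instance. Take~$x^0 \defequal \frac12$, covering radius~$r \defequal 1$ so that~$p \in \Bcal_1(\frac12)$, an empty \algoname{optional} step at every iteration, and a \covering oracle~$\Obb$ that returns a single \emph{rational} direction~$d \in \Qbb \cap \cl(\Bcal_1)$ achieving at least half of the maximal point-to-history distance, namely~$\dist(x+d,\Scal) \geq \frac12 \max_{d' \in \cl(\Bcal_1)} \dist(x+d',\Scal)$. Such a rational~$d$ always exists by continuity of~$d' \mapsto \dist(x+d',\Scal)$ and density of~$\Qbb$, so~$\Obb$ satisfies Definition~\ref{definition:oracle} with~$\beta \defequal \frac12$. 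I would then argue by induction that the incumbent never moves: since~$x^0$ and every oracle direction are rational, every trial point~$x^k + d$ is rational, hence distinct from the irrational~$p$, so~$f(x^k+d) = \max\{1,\abs{x^k+d}\} \geq 1 = f(\frac12)$; the strict-decrease test of the \covering step fails and~$x^{k+1} = x^k = \frac12$.

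Finally I would conclude. The incumbent sequence is constant, so~$\acc = \{\frac12\}$ with~$\frac12 \in X$, and~$\lim_{k} f(x^k) = 1$, whereas~$\min f(\Bcal_1(\frac12)) = f(p) = 0$ because~$p \in \Bcal_1(\frac12)$. This contradicts the~$x^* \in X$ branch of Theorem~\ref{theorem:convergence_algo_with_covering}, which would demand~$\lim_k f(x^k) = \min f(\Bcal_1(\frac12)) = f(\frac12)$. Moreover the instance is entirely legitimate, and since~$\Obb$ is a genuine \covering oracle, Proposition~\ref{proposition:sufficient_condition_covering} even guarantees~$\Bcal_1(\frac12) \subseteq \cl(\Hcal)$: the algorithm covers the neighborhood densely yet still misses the infimum. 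The main obstacle is the simultaneous requirement that~$\Obb$ satisfy Definition~\ref{definition:oracle} with a \emph{uniform}~$\beta$ while provably never outputting the single bad point; I resolve it by keeping all iterates and directions rational, which excludes the irrational~$p$ automatically, and by invoking density of~$\Qbb$ to preserve the oracle ratio at every step.
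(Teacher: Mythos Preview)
Your counterexample is correct and establishes the tightness claim. Both you and the paper argue via an explicit counterexample built on the same mechanism---a locally thin continuity set carrying a strictly lower value that the algorithm never samples---but the implementations differ. The paper works in~$\Rbb^2$ with the thin part of the continuity set being the half-line~$\Rbb_+^* \times \{0\}$, and it simply asserts the existence of an instance of Algorithm~\ref{algo:algo_with_covering} whose \covering and \algoname{optional} directions avoid that half-line, without constructing the oracle explicitly. You work in~$\Rbb$ with the thin set reduced to a single irrational point, and you build the oracle concretely by forcing all returned directions to be rational, verifying Definition~\ref{definition:oracle} through the continuity of~$d' \mapsto \dist(x+d',\Scal)$ and the density of~$\Qbb$. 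Your route is more elementary (dimension one, a single bad point) and more careful about the oracle's legitimacy; the paper's example is arguably more representative of how Assumption~\refbis{assumption:problem}{X} fails in practice, since its thin set is an uncountable ray rather than a singleton, but it leaves the oracle construction to the reader. One small technicality you skipped: at iteration~$k=0$ the history~$\Hcal^0$ is empty, so Definition~\ref{definition:oracle} imposes no ratio constraint there and you may simply set~$\Obb(x,\emptyset) \defequal \{0\}$; your induction then proceeds unchanged from~$k=1$ onward.
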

\begin{proof}
    Let us develop a counterexample.
    Let~$n = 2$ and~$f: \Rbb^2 \to \Rbb$ be defined by
    \begin{equation*}
        \forall x \in X \defequal \Rbb^2, \qquad
        f(x) \defequal \left\{\begin{array}{ll}
            \textabs{x_1-1}+\textabs{x_2}-1, & \mbox{if}~ x \in X_1 \defequal (\Rbb_-\times\Rbb) \cup (\Rbb^*_+\times\{0\}), \\
            \textabs{x_1}+\textabs{x_2},     & \mbox{if}~ x \in X_2 \defequal X \setminus X_1.
        \end{array}\right.
    \end{equation*}
    Assumptions~\refbis{assumption:problem}{f_bound} and~\refbis{assumption:problem}{f_low} hold, but Assumption~\refbis{assumption:problem}{X} does not since~$X_1$ is locally thin and thus not ample.
    Consider an instance of Algorithm~\ref{algo:algo_with_covering} such that~$(\DcalC^k \cup \DcalO^k) \cap (\Rbb^*_+\times\{0\}) = \emptyset$ for all~$k\in \Nbb$ and starting from the origin.
    This instance remains at the origin, since it evaluates only points in~$\cl(\int(X_1)) \cup X_2 = \Rbb^2 \setminus (\Rbb_+^* \times \{0\})$ and the origin is the global minimizer of the restriction of~$f$ to~$\cl(\int(X_1)) \cup X_2$.
    In that situation, the origin is a refined point lying in~$X$ but is not a local minimizer of~$f$, which contradicts the conclusion of Theorem~\ref{theorem:convergence_algo_with_covering}.
\end{proof}

Assumption~\refbis{assumption:problem}{X} allows a broad class of discontinuous functions, as it only rejects discontinuous functions for which at least one of the continuity sets is locally thin.
A relaxation as a local assumption holding only at the refined points is possible, but such relaxation can be checked only a posteriori since it is impossible to determine the refined points a priori.
Hence our framework cannot be broadened using only information available a priori.
Nevertheless, we leave the associated adaptation of Theorem~\ref{theorem:convergence_algo_with_covering} as Assertion~\ref{assertion:convergence_cdsm_no_assumption_X} below.
Its proof is similar to those provided in Section~\ref{section:main_result_2}.

\begin{assertion}
    \label{assertion:convergence_cdsm_no_assumption_X}
    Under Assumptions~\refbis{assumption:problem}{f_bound} and~\refbis{assumption:problem}{f_low}, all~$x^* \in \Acal \neq \emptyset$ generated by Algorithm~\ref{algo:algo_with_covering} and all sets~$K^* \subseteq \Nbb$ such that~$(x^k)_{k \in K^*}$ converges to~$x^*$ satisfy
    \begin{enumerate}[label=\alph*)]
        \item if~$\Xcal_i$ is ample for all~$i \in \ll1,N\rr$, then~$\lim_{k \in K^*} f(x^k) = \inf f(\Bcal_r(x^*))$;
        \item if moreover~$x^* \in \Xcal_i$ for some~$i \in \ll1,N\rr$, then~$\lim_{k \in K^*} f(x^k) = \min f(\Bcal_r(x^*)) = f(x^*)$;
    \end{enumerate}
    where we denote by~$(\Xcal_i)_{i=0}^{N}$ a partition of~$\Bcal_r(x^*)$, for some~$N \in \Nbb^* \cup \{+\infty\}$, such that~$\Xcal_0 \defequal \Bcal_r(x^*) \setminus X$ and~$\Xcal_i$ is a continuity set of~$f$ for all~$i \in \ll1,N\rr$.
\end{assertion}

\section{General comments and main extensions for future work}
\label{section:general_comments}

Theorem~\ref{theorem:convergence_cdsm} is stronger than most results about the \dsm in three aspects.
First, Theorem~\ref{theorem:convergence_cdsm} ensures the local optimality of all refined points while the literature usually checks necessary optimality conditions.
Second, Theorem~\ref{theorem:convergence_cdsm} involves Assumption~\ref{assumption:problem} on the problem only, while the \dsm~\cite[Theorems~3.1 and~3.2]{ViCu2012} and the \rdsm~\cite[Theorem~1]{AuBoBo22} require also an assumption on the execution of the algorithm (respectively, that~$\lim_{k \in K^*} f(x^k) = f(x^*)$ for all~$K^* \subseteq \Nbb$ indexing a refining subsequence with refined point~$x^*$, and that a unique refined point is generated).
Third, Theorem~\ref{theorem:convergence_cdsm} holds for all refining subsequences.
In contrast, the literature usually considers only refining subsequences such that the set of associated refined directions is dense in~$\Sbb^n$, and no way to identify such a refining subsequence is provided.

The assumption that~$f$ is lower semicontinuous may be relaxed.
The proof of Theorem~\ref{theorem:convergence_cdsm} highlights that, if only Assumption~\refbis{assumption:problem}{f_low} fails, then~$\lim_{k \in K^*} f(x^k) = \inf f(\Bcal_r(x^*))$ holds true for all~$x^* \in \Rcal$ and all~$K^* \subseteq \Nbb$ indexing an associated refining subsequence.
Assuming the lower semicontinuity of~$f$ at~$x^*$ recovers~$\inf f(\Bcal_r(x^*)) = \min f(\Bcal_r(x^*)) = f(x^*)$.
A similar claim holds true for Theorem~\ref{theorem:convergence_algo_with_covering}.

Only the \covering step matters to establish the convergence towards local solutions, in the sense formalized in Section~\ref{section:main_result_2}.
Many methods fit in Algorithm~\ref{algo:algo_with_covering} when enhanced with a \covering step.
Future work may implement a \covering step into Bayesian methods~\cite{ZhZi21GlobalBayesian}, model-based methods~\cite{AuHa20}, and most methods listed in~\cite{LaMeWi2019}.
The use of a few \covering step directions per iteration also shares similarities with \dfo line search algorithms~\cite{BeCaSc21} and reduced space algorithms~\cite{RoRo23ReducedSpaces}.
By coupling the \covering step with others ideas from the literature, several technical requirements in the \cdsm may be alleviated.
For example, the limitation that~$x^0 \in X$ is relaxable when Problem~\eqref{problem:P} is the \textit{extreme barrier} formulation of a constrained problem with quantifiable constraints~\cite{LedWild2015}, using for example the two-phase algorithm~\cite[Algorithm~12.1]{AuHa2017} or the \textit{progressive barrier}~\cite{AuDe09a}.

Although they are optional in Algorithm~\ref{algo:algo_with_covering}, the \search and \poll steps are important in practice.
The first allows for global exploration, and the second usually contributes to many successful iterations.
In contrast, the \covering step aims to ensure the asymptotic Property~\ref{property:DCP}, so a poor instance may be inefficient in finite time.
We conducted a numerical experiment on problems where the objective functions are alterations of~$f \defequal \textnorm{\cdot}$.
It appears that the addition of the \covering step makes negligible difference in performance when the objective function is not too erratic near the local solution the \cdsm approaches, but that the \covering step provides a gain of quality on the returned solution when the objective function has very thin continuity sets.
These experiments are reported to Appendix~\ref{appendix:materials/numerical}.
Future work may investigate these observations and quantify the relevance of each step.
Moreover, we only tested our baseline schemes for the \covering step schemes from Section~\ref{section:comments_covering_step/construction_schemes}.
Further investigations and careful implementations may provide more efficient schemes.
We may also study how well the \cdsm performs when Assumption~\refbis{assumption:problem}{X} holds but its stronger variant Assumption~\ref{assumption:AuBoBo} does not.
Presumably, the interior cone property from Assumption~\ref{assumption:AuBoBo} is important for practical efficiency.

Our schemes for the \covering oracle in Section~\ref{section:comments_covering_step/construction_schemes} ensure only that the trial points are well spread around the current incumbent solution at each iteration.
Another scheme, for example from an expected improvement~\cite{ZhXi20} or model-based techniques~\cite{AuHa20}, may also seek for points that are relevant candidates or that help to gather the shape of~$f$.
Another future work could alter Assumption~\refbis{assumption:problem}{X} so that a partition of~$X$ into continuity sets of~$f$ is explicitly provided.
A \covering oracle designed accordingly could therefore evaluate points in all continuity sets of~$f$, even if some are not ample.

The \covering step is also compatible with the \discomads algorithm~\cite{AuBaKo22}, which designs the original \revealing step for the purpose to detect discontinuities and repel its incumbent solution from those.
This discontinuities detection is more accurate with the \covering step than with the \revealing step, as the latter may fail to detect discontinuities when more than one refined point is eventually generated.
Also, when the \covering step relies on Oracle~\ref{oracle:scheme_covering}, it presumably has better practical guarantees to efficiently detect discontinuities than both the the original \revealing step from~\cite{AuBaKo22} and the adapted \revealing step from~\cite{AuBoBo22}.
Then, for reliability reasons, it may be safer to use the \discomads algorithm with a \covering step instead of a \revealing step.

We conclude this paper with the next Table~\ref{table:summary_cdsm_conceptual}.
It summarizes the conceptual differences between the usual \dsm and our \cdsm, and it highlights the aforementioned ideas to extend our work.

\medskip

\begin{table}[ht]
    \centering
    \setlength{\tabcolsep}{2pt} \renewcommand{\arraystretch}{1.5}
    \begin{tabular}{|| M{0.12\linewidth} || M{0.27\linewidth} | M{0.27\linewidth} | M{0.27\linewidth} ||}
        \hline \hline
        \multirow{2}{*}{step} & \multicolumn{3}{c||}{method} \\
        \cline{2-4}
        & \dsm & \cdsm & \dfo method with involved \covering step design \\ \hline \hline
        \search   &
            \multicolumn{3}{c||}{Optional. Allows the use of heuristics and globalization strategies.}
            \\ \hline
        \poll &
            Required. Converges towards refined points satisfying necessary optimality conditions.
            &
            \multicolumn{2}{c||}{\begin{minipage}{0.5\linewidth}\centering Optional. But in practice, it performs well in converging towards a good refined point. \end{minipage}}
            \\ \hline
        \covering &
            Undefined.
            &
            Required. Asymptotically ensures that all refined points are local solutions; low cost per iteration but lacks efficiency in practice.
            &
            Required. Asymptotically ensures that all refined points are local solutions; exploits information about the objective function.
            \\ \hline \hline
    \end{tabular}
    \caption{Differences between \dsm and \cdsm, and possible new methods with new \covering step goals.}
    \label{table:summary_cdsm_conceptual}
\end{table}

\clearpage\newpage
\appendix

\section{Supplementary materials}
\label{appendix:materials}

\subsection{Variant of Algorithm~\ref{algo:cdsm} with nonstringent definition of its parameters}
\label{appendix:materials/algo}

\medskip

\begin{algorithm}[ht]
    \caption{Generic \cdsm (\covering \dsm) solving Problem~\eqref{problem:P}.}
    \label{algo:cdsm_generic}
    \begin{algorithmic}
    \State \textbf{Initialization}: \vspace{0.1cm}
    \State \hfill \begin{minipage}{0.96\linewidth}
        set a covering radius~$r \in \Rbb_+^*$ and the trial points history as~$\Hcal^0 \defequal \emptyset$; \\
        set the incumbent solution and poll radius as~$(x^0,\delta^0) \in X \times \Rbb_+^*$, and set~$\underline{\delta}^0 \defequal \delta^0$; \\
        set~$\Mcal: \Rbb_+ \to 2^{\Rbb^n}$ and~$\rho: \Rbb_+ \to \Rbb_+$ and~$\Lambda \subset {]}0,1[$ and~$\Upsilon \subset [1,+\infty[$ according to one of
        \begin{itemize}[topsep=0pt,partopsep=0pt]
            \item the \textit{mesh-based \dsm}:~$\Mcal(\nu) \defequal \min\{\nu,\frac{\nu^2}{\delta^0}\}M\Zbb^p$ for all~$\nu \in \Rbb_+$, where~$p > n$ and~$M \in \Rbb^{n\times p}$ positively spans~$\Rbb^n$, and~$\rho(\cdot) \defequal 0$, and~$\Lambda \subseteq \{\tau^\ell : \ell \in \ll1,m\rr\}$ and~$\Upsilon \subseteq \{\tau^\ell : \ell \in \ll-m,0\rr\}$ where~$\tau \in \Qbb \cap {]}0,1[$ and~$m \in \Nbb^*$;
            \item the \textit{sufficient decrease-based \dsm}:~$\Mcal(\cdot) \defequal \Rbb^n$, and~$\rho$ increasing with~$0 < \rho(\nu) \in o(\nu)$ as~$\nu \searrow 0$, and~$\Lambda \subseteq [\underline{\lambda},\overline{\lambda}]$ and~$\Upsilon \subseteq [\underline{\upsilon},\overline{\upsilon}]$ where~$0 < \underline{\lambda} \leq \overline{\lambda} < 1 \leq \underline{\upsilon} \leq \overline{\upsilon} < +\infty$.
        \end{itemize}
    \end{minipage} \vspace{0.2cm}
    \For{$k \in \Nbb$}:
    \State
        \covering \textbf{step}:
        \State \hfill
        \begin{minipage}{0.93\linewidth}
            set~$\DcalC^k \subseteq \Mcal(\underline{\delta}^k) \cap \cl(\Bcal_r)$ nonempty and finite;
            set~$\TcalC^k \defequal \pointplusset{x^k}{\DcalC^k}$;
            set~$\tC^k \in \argmin f(\TcalC^k)$; \\
            if~$f(\tC^k) < f(x^k)-\rho(\underline{\delta}^k)$, then set~$t^k \defequal \tC^k$ and~$\TcalS^k = \TcalP^k \defequal \emptyset$ and skip to the \update step;
        \end{minipage}
    \vspace{0.1cm}
    \State
        \search \textbf{step}:
        \State \hfill
        \begin{minipage}{0.93\linewidth}
            set~$\DcalS^k \subseteq \Mcal(\underline{\delta}^k)$ empty or finite;
            if~$\TcalS^k \defequal \pointplusset{x^k}{\DcalS^k}$ is nonempty, then set~$\tS^k \in \argmin f(\TcalS^k)$; \\
            if also~$f(\tS^k) < f(x^k)-\rho(\underline{\delta}^k)$, then set~$t^k \defequal \tS^k$ and~$\TcalP^k \defequal \emptyset$ and skip to the \update step;
        \end{minipage}
    \vspace{0.1cm}
    \State
        \poll \textbf{step}:
        \State \hfill
        \begin{minipage}{0.93\linewidth}
            set~$\DcalP^k \subseteq \Mcal(\underline{\delta}^k) \cap \cl(\Bcal_{\delta^k})$ a positive basis of~$\Rbb^n$;
            set~$\TcalP^k \defequal \pointplusset{x^k}{\DcalP^k}$;
            set~$\tP^k \in \argmin f(\TcalP^k)$; \\
            if~$f(\tP^k) < f(x^k)-\rho(\underline{\delta}^k)$, then set~$t^k \defequal \tP^k$, otherwise set~$t^k \defequal x^k$;
        \end{minipage}
    \vspace{0.1cm}
    \State \update \textbf{step}:
        \State \hfill
        \begin{minipage}{0.93\linewidth}
            set~$\Hcal^{k+1} \defequal \Hcal^k \cup \Tcal^k$, where~$\Tcal^k \defequal \TcalC^k \cup \TcalS^k \cup \TcalP^k$;
            set~$x^{k+1} \defequal t^k$; \\
            set~$\delta^{k+1}$ as~$\delta^{k+1} \in \delta^k\Upsilon$ if~$t^k \neq x^k$ and~$\delta^{k+1} \in \delta^k\Lambda$ otherwise;
            set~$\underline{\delta}^{k+1} \defequal \min_{\ell \leq k} \delta^\ell$.
        \end{minipage}
    \EndFor
    \end{algorithmic}
\end{algorithm}

\smallskip

\subsection{Report of some numerical experiments regarding the \cdsm}
\label{appendix:materials/numerical}

This appendix compares the numerical performance of the \cdsm with the usual \dsm.
We consider two problems that illustrate typical situations a \dfo algorithm may face in a discontinuous context,
\begin{equation}
    \label{problem:P_numerical_1}
    \tag{$\Pbf^\mathbf{test}_1$}
    \problemoptimfree{\minimize}{x \in \Rbb^2}{
        \left\{\begin{array}{ll}
            \norm{x}_\infty+1 & \mbox{if}~ x_1 > 0, \\
            \norm{x}_\infty   & \mbox{otherwise,}
        \end{array}\right.
        }
\end{equation}
and
\begin{equation}
    \label{problem:P_numerical_2}
    \tag{$\Pbf^\mathbf{test}_2$}
    \problemoptimfree{\minimize}{x \in \Rbb^2}{
        \left\{\begin{array}{ll}
            \norm{x}_\infty+1 & \mbox{if}~ x_1 > 0, \\
            \norm{x}_\infty   & \mbox{if}~ x_1 \leq 0 ~\mbox{and}~ \norm{p(x)}_2 \leq \min\{\norm{q(x)}_2^2, \frac{1}{100}\}, \\
            +\infty           & \mbox{otherwise,}
        \end{array}\right.
        }
\end{equation}
where, for all~$x \in \Rbb^2$,~$p(x) \defequal \frac{\transpose{x}a}{\textnorm{a}^2}a$ denotes the projection of~$x$ onto the line directed by~$a \defequal (-1,1)$ and where~$q(x) \defequal x-p(x)$.
The objective functions of these two problems are represented on Figure~\ref{figure:illustration_problems}.

\begin{figure}[!ht]
    \centering
    \includegraphics[width=0.30\linewidth, trim=30 00 15 00, clip]{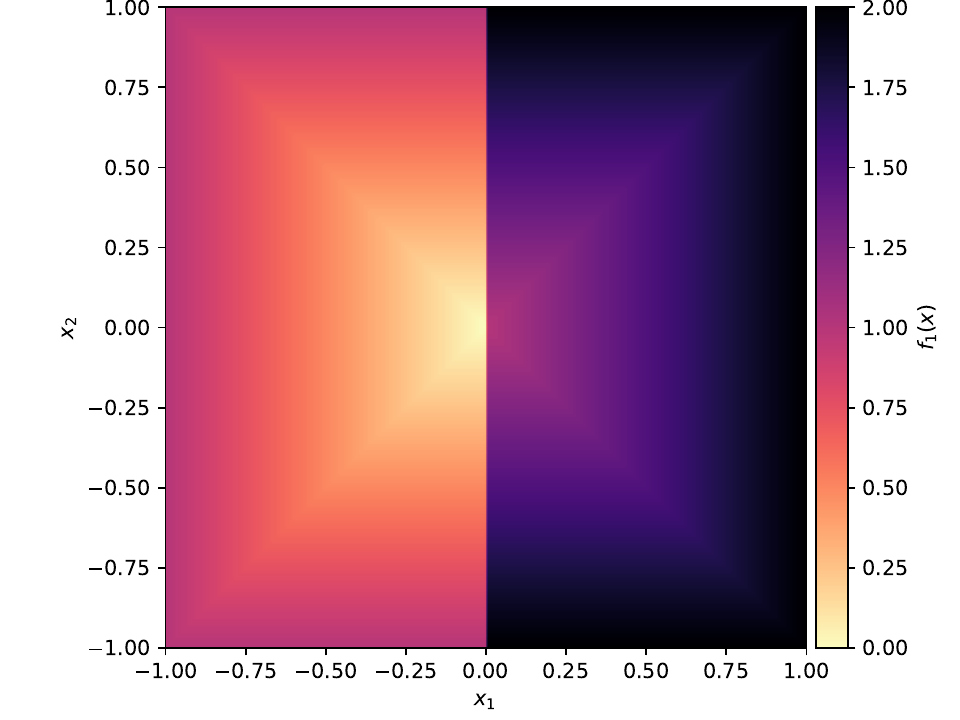}
    \hspace{1cm}
    \includegraphics[width=0.30\linewidth, trim=30 00 15 00, clip]{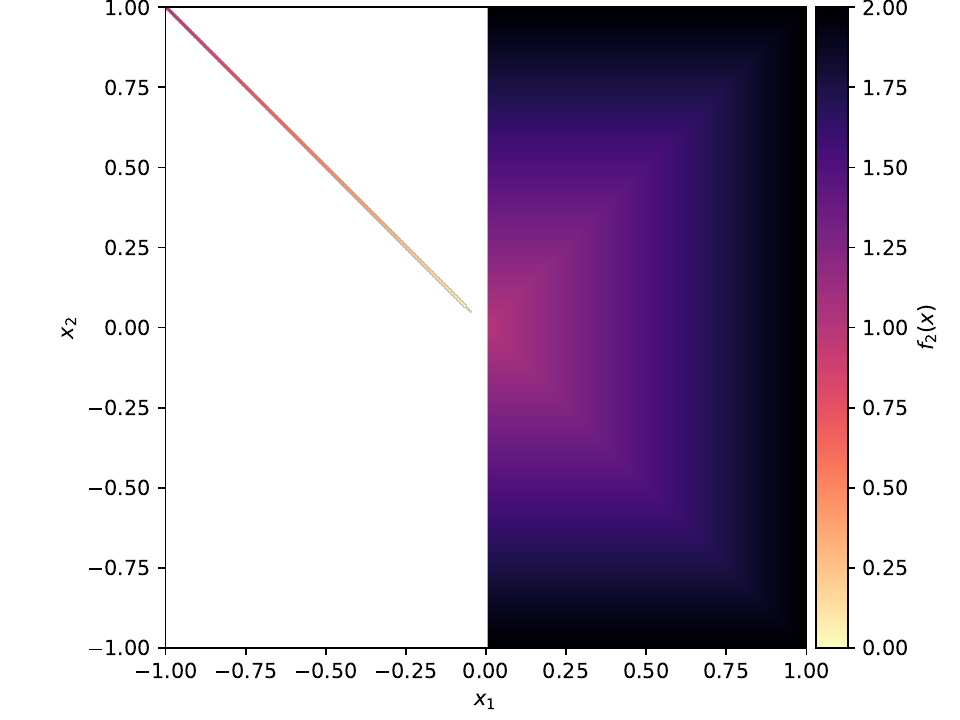}
    \hfill
    \caption{Objective functions of Problems~\eqref{problem:P_numerical_1} (on the left plot) and~\eqref{problem:P_numerical_2} (on the right plot).}
    \label{figure:illustration_problems}
\end{figure}

Our experiments study an instance of Algorithm~\ref{algo:cdsm}.
In agreement with the framework of Theorem~\ref{theorem:convergence_algo_with_covering}, we define~$\Mcal(\nu) \defequal \Rbb^n$ and~$\rho(\nu) \defequal 0$ for all~$\nu \in \Rbb_+$.
The incumbent is~$x^0 \defequal (98.7654321,12.3456789)$, the poll radius is~$\delta^0 \defequal 1$, the \covering radius is~$r \defequal \frac{\delta^0}{10}$, and the shrinking factor is~$\tau \defequal \frac{1}{2}$.
The \covering step considers, for each iteration~$k \in \Nbb$, all directions~$d$ on a grid over~$\cl(\Bcal_r)$ thin enough and selects one maximizing~$\dist(x^k+d,\Hcal^k)$, as in Section~\ref{section:comments_covering_step/construction_schemes}.
The \search step explores a momentum-based strategy with~$\DcalS^k \defequal \{3(x^k-x^{k-1})\}$ for all iteration~$k \in \Nbb^*$.
The \poll step follows the orthogonal polling from~\cite{AbAuDeLe09}.
The stopping criterion is that either~$\delta^k < 10^{-8}$ or~$k \geq 300$.
We compare~$10$ executions of this \cdsm with~$10$ executions of the associated \dsm skipping the \covering step.

For each problem and each execution of each algorithm, we record the smallest objective function value found after the current number of objective function evaluations, and the evolution of the ratio of respectively the number of \search successes, \covering successes, \poll successes and iterations failures over the current number of iterations achieved.

Figure~\ref{figure:numerical_2} shows the results for Problem~\eqref{problem:P_numerical_1}.
The discontinuity is easy to handle, so the \dsm performs well while the \cdsm requires~$5\%$ to~$10\%$ more function evaluations.
For both algorithms, the contribution of the \search step is important at the beginning of the optimization process but declines when the algorithm approaches the solution.
Then, the \poll step starts performing instead.
The \covering step behaves similarly to the \search step.

\begin{figure}[!ht]
    \centering
    \includegraphics[width=0.9\linewidth]{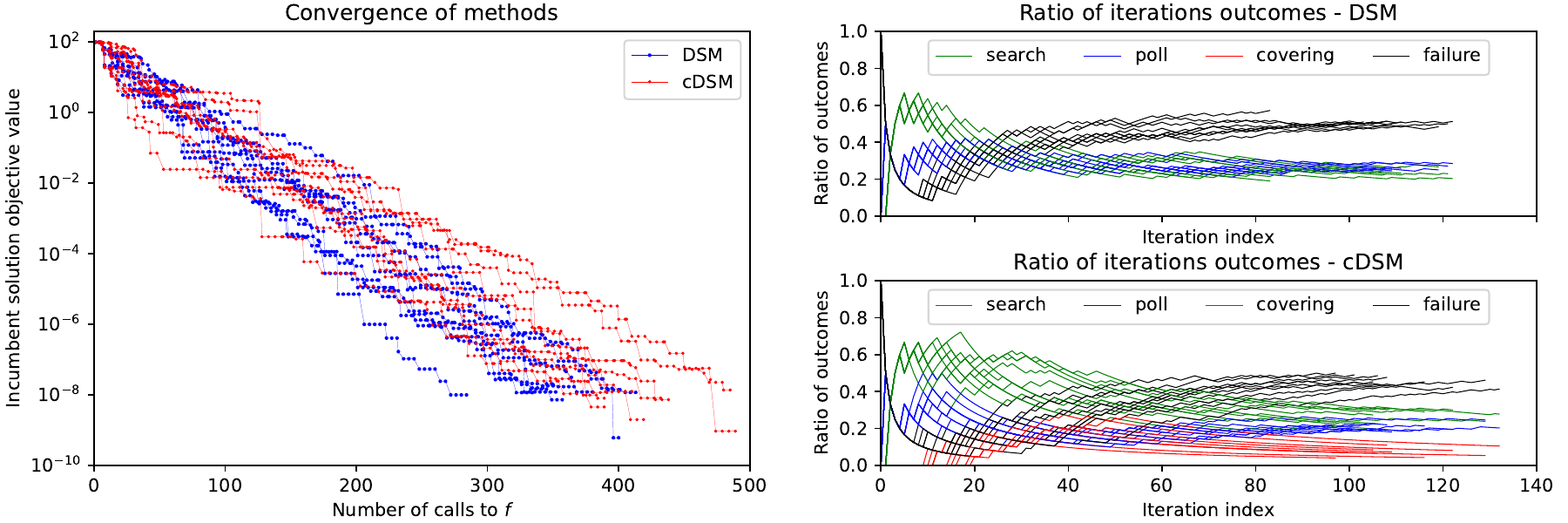}
    \caption{Results of our experiments on Problem~\eqref{problem:P_numerical_1}.}
    \label{figure:numerical_2}
\end{figure}

The results on Problem~\eqref{problem:P_numerical_2} are shown in Figure~\ref{figure:numerical_3}.
The \dsm converges close to the solution, but it fails to reliably find the continuity set of~$f$ containing it.
In~$4$ out of~$10$ cases, the \dsm does not leave the continuity set it started from.
In contrast, the \cdsm systematically converges to the solution from the correct continuity set.
Both algorithms trigger the stopping criterion~$k \geq 300$, but other tests conducted by relaxing this criterion show a similar outcome.
We suspect that this problem is difficult because the continuity set containing the solution does not have the ICP (from Definition~\ref{definition:ICP}).

\begin{figure}[!ht]
    \centering
    \includegraphics[width=0.9\linewidth]{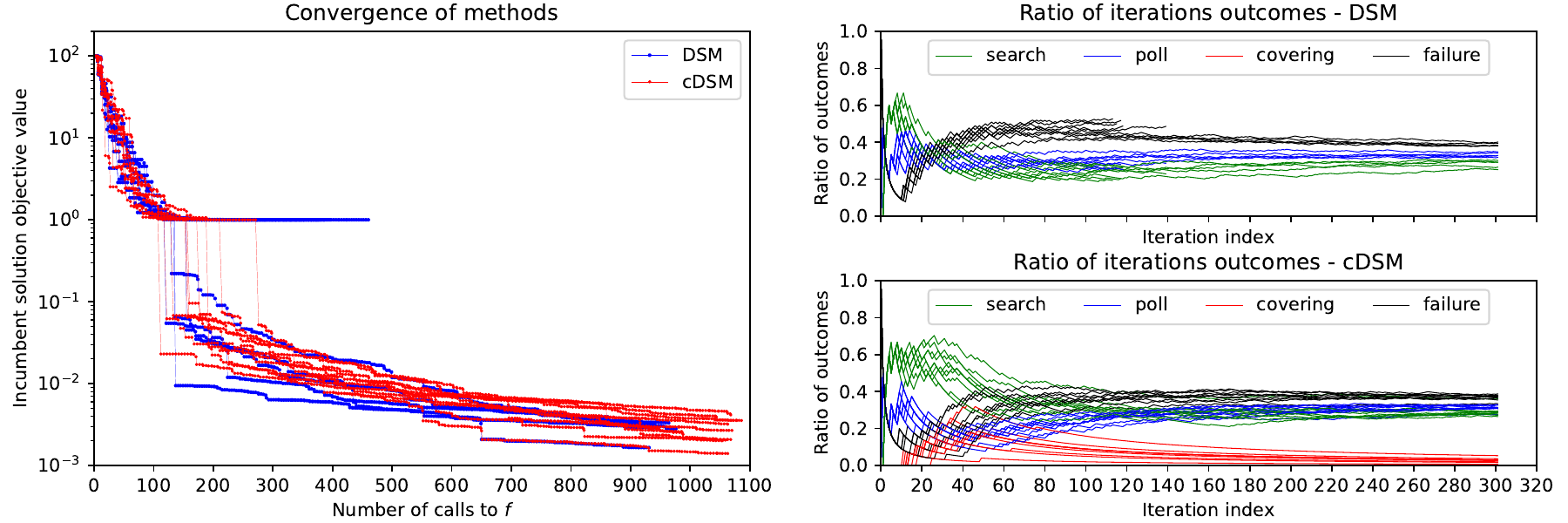}
    \caption{Results of our experiments on Problem~\eqref{problem:P_numerical_2}.}
    \label{figure:numerical_3}
\end{figure}

\section{Proofs of auxiliary results}
\label{appendix:propositions}

\begin{proposition}
    \label{proposition:not_ample_equiv_locally_thin}
    For all~$\Scal \subseteq \Rbb^n$,~$\Scal$ is locally thin if and only if it is not ample.
\end{proposition}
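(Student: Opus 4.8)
The plan is to prove the two implications separately, each being a direct unwinding of the definitions. The only genuine ingredient is the standard topological characterization of the closure: for any $A \subseteq \Rbb^n$ and any $x \in \Rbb^n$, one has $x \in \cl(A)$ if and only if every open neighborhood of $x$ meets $A$; equivalently, $x \notin \cl(A)$ if and only if there exists an open neighborhood $\Ncal$ of $x$ with $\Ncal \cap A = \emptyset$. I would apply this with $A \defequal \int(\Scal)$ throughout.

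For the implication \emph{locally thin} $\implies$ \emph{not ample}, I would start from a witnessing open set $\Ncal$ satisfying $\Scal \cap \Ncal \neq \emptyset = \int(\Scal) \cap \Ncal$, and pick any point $x \in \Scal \cap \Ncal$. Since $\Ncal$ is then an open neighborhood of $x$ disjoint from $\int(\Scal)$, the characterization above yields $x \notin \cl(\int(\Scal))$. As $x \in \Scal$, this shows $\Scal \not\subseteq \cl(\int(\Scal))$, i.e.\ $\Scal$ is not ample.

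For the converse \emph{not ample} $\implies$ \emph{locally thin}, I would negate the inclusion defining ampleness to obtain a point $x \in \Scal$ with $x \notin \cl(\int(\Scal))$. By the characterization, there is an open neighborhood $\Ncal$ of $x$ with $\int(\Scal) \cap \Ncal = \emptyset$. Since $x \in \Scal \cap \Ncal$, this same $\Ncal$ witnesses $\Scal \cap \Ncal \neq \emptyset = \int(\Scal) \cap \Ncal$, so $\Scal$ is locally thin.

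I do not expect any real obstacle here: the two implications are essentially contrapositives of one another, both driven by the same neighborhood $\Ncal$, and the proof reduces to applying the closure characterization to $\int(\Scal)$ once in each direction. The only point requiring the slightest care is to keep the roles of $\Scal$ and $\int(\Scal)$ straight (the neighborhood must avoid $\int(\Scal)$ while still meeting $\Scal$), but this is immediate from the definitions as stated.
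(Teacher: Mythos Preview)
Your proof is correct and follows essentially the same approach as the paper's: both directions hinge on the closure characterization applied to $\int(\Scal)$. The only cosmetic differences are that the paper phrases the first implication as a contradiction rather than directly, and in the converse it exhibits the explicit witness $\Ncal \defequal \Rbb^n \setminus \cl(\int(\Scal))$ in one stroke rather than first choosing a point and then a neighborhood; neither difference is substantive.
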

\begin{proof}
    Let~$\Scal \subseteq \Rbb^n$ be locally thin and let us prove by contradiction that it is not ample.
    Assume that~$\Scal$ is ample.
    Define~$\Ncal \subseteq \Rbb^n$ open such that~$\Scal \cap \Ncal \neq \emptyset = \int(\Scal) \cap \Ncal$, and take~$x \in \Scal \cap \Ncal$.
    Then~$x \in \cl(\int(\Scal)) \cap \Ncal$ and thus there exists~$\varepsilon > 0$ such that~$\Bcal_\varepsilon(x) \subseteq \Ncal$ and~$\Bcal_\varepsilon(x) \cap \int(\Scal) \neq \emptyset$.
    Hence~$\int(\Scal) \cap \Ncal \neq \emptyset$ which raises a contradiction.
    Reciprocally, let~$\Scal \subseteq \Rbb^n$ be not ample.
    Then,~$\Ncal \defequal \Rbb^n \setminus \cl(\int(\Scal))$ is open, and~$\Scal \cap \Ncal \neq \emptyset = \int(\Scal) \cap \Ncal$ by construction.
    Hence~$\Scal$ is locally thin.
\end{proof}

\begin{proposition}
    \label{proposition:ample_sets}
    We say that~$\Scal_1 \subseteq \Rbb^n$ \textit{has a dense intersection with}~$\Scal_2 \subseteq \Rbb^n$ if~$\Scal_2 \subseteq \cl (\Scal_1 \cap \Scal_2)$.
    The following properties hold for all~$\Scal_1 \subseteq \Rbb^n$,~$\Scal_2 \subseteq \Rbb^n$ and~$\Scal_3 \subseteq \Rbb^n$:
    \begin{enumerate}[label=\alph*)]
        \item
            \label{proposition:ample_sets:ample_inter_open}
            if~$\Scal_1$ is ample and~$\Scal_2$ is open, then~$\Scal_1 \cap \Scal_2$ is ample;
        \item
            \label{proposition:ample_sets:inner_dense_open_subset}
            if~$\Scal_1$ has a dense intersection with~$\Scal_2$ and~$\Scal_3$ is an ample subset of~$\Scal_2$, then~$\Scal_1$ has a dense intersection with~$\Scal_3$;
        \item
            \label{proposition:ample_sets:dense_in_ample_nonconstant_sequence}
            if~$\Scal_1$ has a dense intersection with~$\Scal_2$ and~$\Scal_2$ is ample, then, for all~$x \in \Scal_2$,~$\Scal_1 \setminus \{x\}$ has a dense intersection with~$\Scal_2$;
        \item
            \label{proposition:ample_sets:dense_in_ample_minus_point}
            if~$\Scal_1$ is open,~$\Scal_2$ is ample and~$\Scal_3$ has a dense intersection with~$\Scal_1$, then for all~$x \in \Scal_1$, the set~$\Scal_3 \setminus \{x\}$ has a dense intersection with~$\Scal_1 \cap \Scal_2$.
    \end{enumerate}
\end{proposition}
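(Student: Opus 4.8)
The plan is to establish the four parts in the order (a), (b), (c), (d), since each later part is built from the earlier ones and part (d) in particular will be assembled from (a)--(c) together with a short case distinction. Throughout I would use the characterization that $\Scal$ is ample iff $\Scal \subseteq \cl(\int(\Scal))$, and verify every density claim pointwise, i.e.\ by showing that each ball $\Bcal_\varepsilon(y)$ around a target point $y$ meets the relevant intersection. For part (a), take $x \in \Scal_1 \cap \Scal_2$. Since $\Scal_1$ is ample, $x \in \cl(\int(\Scal_1))$, and since $\Scal_2$ is open it contains a ball $\Bcal_\varepsilon(x) \subseteq \Scal_2$. The set $\int(\Scal_1) \cap \Scal_2$ is open and contained in $\Scal_1 \cap \Scal_2$, hence lies in $\int(\Scal_1 \cap \Scal_2)$; thus any point of $\int(\Scal_1)$ inside $\Bcal_\varepsilon(x)$ lies in $\int(\Scal_1 \cap \Scal_2)$, and such points approach $x$. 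This yields $x \in \cl(\int(\Scal_1 \cap \Scal_2))$, so $\Scal_1 \cap \Scal_2$ is ample.

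For part (b), I would first show that $\Scal_1$ has a dense intersection with the open set $\int(\Scal_3)$. Indeed, every $y \in \int(\Scal_3) \subseteq \Scal_3 \subseteq \Scal_2$ is a limit of points of $\Scal_1 \cap \Scal_2$, and since $\int(\Scal_3)$ is open and contains $y$, these approximating points eventually lie in $\int(\Scal_3)$, hence in $\Scal_1 \cap \int(\Scal_3)$. Therefore $\int(\Scal_3) \subseteq \cl(\Scal_1 \cap \int(\Scal_3)) \subseteq \cl(\Scal_1 \cap \Scal_3)$, and taking closures and invoking the ampleness of $\Scal_3$ gives $\Scal_3 \subseteq \cl(\int(\Scal_3)) \subseteq \cl(\Scal_1 \cap \Scal_3)$, which is the claim.

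Part (c) is the delicate one, and I expect it to be the main obstacle. Fix $x \in \Scal_2$ and a target $y \in \Scal_2$; I would show $y \in \cl((\Scal_1 \setminus \{x\}) \cap \Scal_2)$ in two cases. If $y \neq x$, a sequence in $\Scal_1 \cap \Scal_2$ converging to $y$ has only finitely many terms equal to $x$ (otherwise a subsequence would converge to $x \neq y$), so discarding those terms gives an approximating sequence avoiding $x$. The genuinely subtle case is $y = x$, where deleting $x$ could a priori destroy density, and this is exactly where ampleness of $\Scal_2$ enters: since $x \in \cl(\int(\Scal_2))$, for each $\varepsilon > 0$ there is $z \in \int(\Scal_2) \cap \Bcal_\varepsilon(x)$ with $z \neq x$ (if $x \notin \int(\Scal_2)$ this is automatic, and if $x \in \int(\Scal_2)$ one uses nearby points of the surrounding ball). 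Choosing a small ball $\Bcal_\eta(z) \subseteq \int(\Scal_2) \cap \Bcal_\varepsilon(x)$ that excludes $x$, and applying the density of $\Scal_1$ in $\Scal_2$ at $z$, produces $w \in \Scal_1 \cap \Scal_2 \cap \Bcal_\eta(z) \subseteq (\Scal_1 \setminus \{x\}) \cap \Scal_2 \cap \Bcal_\varepsilon(x)$. Hence $x \in \cl((\Scal_1 \setminus \{x\}) \cap \Scal_2)$.

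Finally, for part (d) I would bootstrap from the previous three. By part (a), $\Scal_1 \cap \Scal_2$ is ample; being an ample subset of $\Scal_1$, and since $\Scal_3$ has a dense intersection with $\Scal_1$, part (b) shows that $\Scal_3$ has a dense intersection with $\Scal_1 \cap \Scal_2$. Now fix $x \in \Scal_1$. If $x \in \Scal_1 \cap \Scal_2$, then applying part (c) with the ample set $\Scal_1 \cap \Scal_2$ playing the role of $\Scal_2$ gives that $\Scal_3 \setminus \{x\}$ has a dense intersection with $\Scal_1 \cap \Scal_2$. If instead $x \notin \Scal_2$, then $x \notin \Scal_3 \cap \Scal_1 \cap \Scal_2$, so deleting $x$ leaves this set unchanged and the conclusion is immediate from the dense intersection just established. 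This covers all $x \in \Scal_1$ and completes the proof.
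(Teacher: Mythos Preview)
Your proof is correct and follows essentially the same route as the paper: the same decomposition with (d) assembled from (a)--(c), and the same key use of ampleness to pass through $\int(\Scal_2)$ or $\int(\Scal_3)$ at each step. The only cosmetic differences are that in (c) you split into the cases $y=x$ and $y\neq x$ whereas the paper runs a single sequence argument, and in (d) you make explicit the trivial case $x\notin\Scal_2$ that the paper leaves implicit.
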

\begin{proof}
    Let us prove the first statement.
    Let~$\Scal_1 \subseteq \Rbb^n$ be ample and~$\Scal_2 \subseteq \Rbb^n$ be open, and let~$x \in \Scal_1 \cap \Scal_2$.
    We have~$x \in \Scal_1 \subseteq \cl(\int(\Scal_1))$, so there exists~$(x^k)_{k \in \Nbb}$ converging to~$x$ with~$x^k \in \int(\Scal_1)$ for all~$k \in \Nbb$.
    Since~$x^k \to x \in \Scal_2$ with~$\Scal_2$ open, it holds that~$x^k \in \Scal_2$ for all~$k$ large enough.
    Finally, we have~$x^k \in \int(\Scal_1) \cap \Scal_2 = \int(\Scal_1) \cap \int ( \Scal_2 ) = \int(\Scal_1 \cap \Scal_2)$ for all~$k$ large enough.
    We deduce that~$x \in \cl(\int(\Scal_1 \cap \Scal_2))$.
    
    Next, let us prove the second statement.
    Let~$\Scal_1 \subseteq \Rbb^n$ having a dense intersection with~$\Scal_2 \subseteq \Rbb^n$ and let~$\Scal_3 \subseteq \Scal_2$ be ample, and let~$x \in \Scal_3$.
    We have~$x \in \cl(\int(\Scal_3))$ so there exists~$(x^k)_{k \in \Nbb}$ converging to~$x$ with~$x^k \in \int(\Scal_3) \subseteq \Scal_2 \subseteq \cl(\Scal_1 \cap \Scal_2)$ for all~$k \in \Nbb$.
    Then, for all~$k \in \Nbb$, there exists~$(x^k_\ell)_{\ell \in \Nbb}$ converging to~$x^k$ with~$x^k_\ell \in \Scal_1 \cap \Scal_2 \cap \int(\Scal_3)$ for all~$\ell \in \Nbb$.
    For all~$k \in \Nbb$, let~$\ell(k) \in \Nbb$ be such that~$\textnorm{x^k_{\ell(k)}-x^k} \leq 2^{-k}$.
    It follows that~$(x^k_{\ell(k)})_{k \in \Nbb}$ converges to~$x$ and~$x^k_{\ell(k)} \in \Scal_1 \cap \Scal_2 \cap \int(\Scal_3) \subseteq \Scal_1 \cap \Scal_3$ for all~$k \in \Nbb$.
    Hence, we get that~$x \in \cl(\Scal_1 \cap \Scal_3)$.

    Now, let us prove the third statement.
    Let~$\Scal_1 \subseteq \Rbb^n$ and let~$\Scal_2 \subseteq \Rbb^n$ be ample.
    Assume that~$\Scal_1$ has a dense intersection with~$\Scal_2$ and let~$x \in \Scal_2$.
    Let~$y \in \Scal_2$.
    Then~$y \in \cl(\int(\Scal_2))$, so there exists~$(y^k)_{k \in \Nbb}$ converging to~$y$ with~$y \neq y^k \in \int(\Scal_2) \subseteq \Scal_2$ for all~$k \in \Nbb$.
    Let~$\varepsilon^k \defequal \textnorm{y^k-y} > 0$ for all~$k \in \Nbb$.
    Then~$(\varepsilon^k)_{k \in \Nbb}$ converges to~$0$.
    Now, remark that by dense intersection of~$\Scal_1$ with~$\Scal_2$, for all~$k \in \Nbb$ there exists~$z^k \in \Scal_1 \cap \Scal_2 \cap \Bcal_{\varepsilon^k}(y^k)$.
    Finally, let~$\kappa \defequal \min\{p \in \Nbb : \max_{k \geq p} \textnorm{y^k-y} \leq \frac{1}{2}\textnorm{y-x}\}$ if~$y \neq x$ and~$\kappa \defequal 0$ if~$y = x$.
    Hence, for all~$k \geq \kappa$ we have~$x \notin \Bcal_{\varepsilon^k}(y^k)$, and thus~$z^k \in \Scal_1 \cap \Scal_2 \cap \Bcal_{\varepsilon^k}(y^k) \setminus \{x\}$.
    Since~$(z^k)_{k \in \Nbb}$ converges to~$y$, it follows that~$y \in \cl((\Scal_1 \setminus \{x\}) \cap \Scal_2)$ as desired.

    Finally, let us prove the fourth statement.
    Let~$\Scal_1 \subseteq \Rbb^n$ open,~$\Scal_2$ ample and~$\Scal_3$ having a dense intersection with~$\Scal_1$.
    Then,~$\Scal_1 \cap \Scal_2$ is an ample subset of~$\Scal_1$ via Proposition~\refbis{proposition:ample_sets}{ample_inter_open}.
    Thus,~$\Scal_3$ has a dense intersection with~$\Scal_1 \cap \Scal_2$, from Proposition~\refbis{proposition:ample_sets}{inner_dense_open_subset} applied to~$\Scal_1 \defequal \Scal_3$ and~$\Scal_2 \defequal \Scal_1$ and~$\Scal_3 \defequal \Scal_1 \cap \Scal_2$.
    The claim follows from Proposition~\refbis{proposition:ample_sets}{dense_in_ample_nonconstant_sequence} applied to~$\Scal_1 \defequal \Scal_3$ and~$\Scal_2 \defequal \Scal_1 \cap \Scal_2$.
\end{proof}

\begin{proposition}
    \label{proposition:AuBaKo_implies_our_technical}
    Under Assumption~\ref{assumption:AuBaKo}, consider the family~$(X_i)_{i=1}^{N}$ it provides.
    For all~$i \in \ll1,N\rr$, denote by~$\cl_f(X_i) \defequal \{x \in \cl(X_i) : f_{|X_i \cup \{x\}}$ is continuous$\}$.
    Define~$I(x) \defequal \min \{i \in \ll1,N\rr : x \in \cl_f(X_i)\}$ for all~$x \in X$.
    Define~$Y_i \defequal \{x \in X : I(x) = i\}$ for all~$i \in \ll1,N\rr$.
    Then,~$X = \partitioncup_{i=1}^{N} Y_i$ and~$Y_i$ is an ample continuity set of~$f$ for all~$i \in \ll1,N\rr$.
\end{proposition}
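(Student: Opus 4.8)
The plan is to treat the three conclusions in increasing order of difficulty: the partition property, then ampleness, then the continuity of $f_{|Y_i}$, which I expect to be the only nontrivial step. First I would note that $I$ is well defined on all of $X$: the third clause of Assumption~\ref{assumption:AuBaKo} guarantees that for every $x \in X$ there is some $j$ with $x \in \cl(X_j)$ and $f_{|X_j \cup \{x\}}$ continuous, i.e. $x \in \cl_f(X_j)$, so the index set $\{i \in \ll1,N\rr : x \in \cl_f(X_i)\}$ is nonempty and its minimum $I(x)$ exists by well-ordering of $\ll1,N\rr \subseteq \Nbb^*$. Since each $Y_i$ is exactly the fiber $\{x \in X : I(x) = i\}$ of the genuine function $I : X \to \ll1,N\rr$, the sets $Y_i$ are automatically pairwise disjoint and cover $X$, which is precisely $X = \partitioncup_{i=1}^{N} Y_i$ (empty fibers causing no trouble).

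Next I would prove the key inclusion $X_i \subseteq Y_i$, from which ampleness follows immediately. Fix $x \in X_i$; since $X_i$ is open there is a ball about $x$ contained in $X_i$, and as the $X_j$ are pairwise disjoint this ball misses every $X_j$ with $j \neq i$, so $x \notin \cl(X_j) \supseteq \cl_f(X_j)$ for all $j \neq i$. Because $x \in X_i \subseteq \cl_f(X_i)$, the index set defining $I(x)$ reduces to $\{i\}$, hence $I(x) = i$ and $x \in Y_i$. Since $X_i$ is open, this upgrades to $X_i \subseteq \int(Y_i)$, and ampleness of $Y_i$ then reads off the chain $Y_i \subseteq \cl_f(X_i) \subseteq \cl(X_i) \subseteq \cl(\int(Y_i))$, where the last inclusion is monotonicity of the closure applied to $X_i \subseteq \int(Y_i)$.

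The remaining and most delicate point is the continuity of $f_{|Y_i}$, and this is where I expect the real work via a diagonal approximation argument. Given a sequence $(y^k)_{k \in \Nbb} \subseteq Y_i$ converging to some $y \in Y_i$, I would use that each $y^k \in \cl_f(X_i)$: if $y^k \in X_i$ set $x^k \defequal y^k$, and otherwise $y^k \in \partial X_i$, so continuity of $f_{|X_i \cup \{y^k\}}$ at $y^k$ lets me select $x^k \in X_i$ with $\norm{x^k - y^k} \leq 2^{-k}$ and $\abs{f(x^k) - f(y^k)} \leq 2^{-k}$. The sequence $(x^k)$ then lies in $X_i$ and converges to $y$; since $y \in Y_i \subseteq \cl_f(X_i)$, continuity of $f_{|X_i \cup \{y\}}$ yields $f(x^k) \to f(y)$, and combining with $\abs{f(x^k) - f(y^k)} \leq 2^{-k}$ gives $f(y^k) \to f(y)$, proving continuity at $y$.

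The main obstacle is exactly this last upgrade: Assumption~\ref{assumption:AuBaKo} only provides continuity of $f$ on each individually augmented set $X_i \cup \{x\}$, and the diagonal choice of $x^k$ is what bridges "approaching $y^k$ through $X_i$" with "approaching $y$ through $X_i$", with the two cases $y^k \in X_i$ and $y^k \in \partial X_i$ needing to be handled uniformly. Everything else — well-definedness of $I$, the partition, and ampleness — is short once $X_i \subseteq Y_i$ is established.
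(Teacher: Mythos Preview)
Your proposal is correct and follows essentially the same approach as the paper: you establish the chain $X_i \subseteq Y_i \subseteq \cl_f(X_i) \subseteq \cl(X_i)$ to obtain ampleness, and then run the same diagonal approximation through $X_i$ (choosing $x^k \in X_i$ close to $y^k$ in both space and value, then invoking continuity of $f_{|X_i \cup \{y\}}$) to prove continuity of $f_{|Y_i}$. The only cosmetic differences are that you spell out the well-definedness of $I$ and split the choice of $x^k$ into the cases $y^k \in X_i$ versus $y^k \in \partial X_i$, whereas the paper treats both at once via a full approximating sequence $(x^k_\ell)_\ell \subseteq X_i$ before diagonalizing.
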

\begin{proof}
    Consider the notation from Proposition~\ref{proposition:AuBaKo_implies_our_technical}.
    By design, the sets~$(Y_i)_{i=1}^{N}$ are pairwise disjoint and their union covers~$X$, so~$X = \partitioncup_{i=1}^{N} Y_i$.
    Then we prove that for all~$i \in \ll1,N\rr$,~$Y_i$ is ample and~$f_{|Y_i}$ is continuous.
    Let~$i \in \ll1,N\rr$.
    First, the properties of the sets~$(X_i)_{i=1}^{N}$ and the construction of~$I$ lead to
    \begin{equation*}
        \int(X_i)
        \overbracebelow{X_i ~\text{open}}{=}
        X_i
        \overbracebelow{I(X_i) = \{i\}}{\subseteq}
        Y_i
        \overbracebelow{
            x \notin \cl_f(X_i) \implies I(x) \neq i \\
            x \in \cl_f(X_i) \implies I(x) \leq i
            }{\subseteq}
        \cl_f(X_i) 
        \overbracebelow{\text{by construction}}{\subseteq}
        \cl(X_i).
    \end{equation*}
    Then,~$\int(Y_i) \supseteq \int(X_i)$ and~$Y_i \subseteq \cl(\int(X_i))$ so~$Y_i$ is ample.
    Moreover, let~$x \in Y_i$ and let~$(x^k)_{k \in \Nbb}$ converging to~$x$ with~$x^k \in Y_i$ for all~$k \in \Nbb$.
    For all~$k \in \Nbb$,~$f_{|X_i \cup \{x^k\}}$ is continuous so there exists~$(x^k_\ell)_{\ell \in \Nbb}$ converging to~$x^k$ such that~$x^k_\ell \in X_i$ for all~$\ell \in \Nbb$ and~$(f(x^k_\ell))_{\ell \in \Nbb}$ converges to~$f(x^k)$.
    Let~$\ell(k) \in \Nbb$ such that~$\textabs{f(x^k_{\ell(k)})-f(x^k)} \leq 2^{-k}$ and~$\textnorm{x^k_{\ell(k)}-x^k} \leq 2^{-k}$.
    Then~$(x^k_{\ell(k)})_{k \in \Nbb}$ converges to~$x$ and~$x^k_{\ell(k)} \in X_i$ for all~$k \in \Nbb$, so~$(f(x^k_{\ell(k)}))_{k \in \Nbb}$ converges to~$f(x)$ by continuity of~$f_{|X_i \cup \{x\}}$.
    Hence~$(f(x^k))_{k \in \Nbb}$ converges to~$f(x)$.
    Thus~$f_{|Y_i}$ is continuous at~$x$, as desired.
\end{proof}

\begin{proposition}
    \label{proposition:new_ECP_equiv_old}
    A set has the interior cone property from Definition~\ref{definition:ICP} if and only if its complement has the exterior cone property from~\cite[Definition~4.1]{ViCu2012} (quoted below).
    \begin{quote}\textbf{\cite[Definition~4.1]{ViCu2012}}
        \label{definition:ECP_old}
        A set~$\Scal \subseteq \Rbb^n$ is said to \textit{have the exterior cone property} if at all points~$x \in \partial \Scal$ there exists a cone~$\Kcal_x \defequal \pointplusset{x}{\Rbb_+^*\Ucal}$ (with $\emptyset \neq \Ucal \subseteq \Sbb^n$ open in induced topology) emanating from~$x$, a neighborhood~$\Ocal_x$ of~$x$ and an angle~$\theta > 0$ such that~$\Ecal_x \subseteq \Scal^c$ and~$\Theta(e-x,a-x) \geq \theta$ for all~$(e,a) \in \Ecal_x \times \Scal_x$, where~$\Theta(\cdot,\cdot)$ computes the unsigned internal angle between two vectors and~$\Scal^c \defequal (\Rbb^n \setminus \Scal)$ and~$\Ecal_x \defequal (\Kcal_x \cap \Ocal_x)$ and~$\Scal_x \defequal (\Scal \cap \Ocal_x \setminus \{x\})$.
    \end{quote}
\end{proposition}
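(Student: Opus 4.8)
The plan is to notice that, modulo passing to the complement, the defining inclusion of the exterior cone property in~\cite[Definition~4.1]{ViCu2012} is \emph{verbatim} the inclusion defining the interior cone property of Definition~\ref{definition:ICP}. Write $\Scal^c \defequal \Rbb^n \setminus \Scal$ and recall that $\partial(\Scal^c) = \partial\Scal$, so both properties quantify over the same boundary points. When~\cite[Definition~4.1]{ViCu2012} is applied to $\Scal^c$, its first requirement reads $\Ecal_x \subseteq (\Scal^c)^c = \Scal$, which is exactly the conclusion $(\Kcal_x \cap \Ocal_x) \subseteq \Scal$ of Definition~\ref{definition:ICP}. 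Thus the \emph{only} genuine difference between the two properties is the extra angular separation $\Theta(e-x,a-x) \geq \theta$ demanded by~\cite[Definition~4.1]{ViCu2012}. This immediately settles the reverse implication: if $\Scal^c$ has the exterior cone property, then at each $x \in \partial\Scal$ the cone $\Kcal_x$ and neighborhood $\Ocal_x$ it furnishes already satisfy $\Kcal_x \cap \Ocal_x \subseteq \Scal$, so $\Scal$ has the interior cone property (the angular condition is simply dropped, shrinking $\Ocal_x$ to its interior if openness is needed).

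For the direct implication I would fix $x \in \partial\Scal$, take the open nonempty direction set $\Ucal \subseteq \Sbb^n$ and the neighborhood $\Ocal_x$ provided by Definition~\ref{definition:ICP}, keep $\Ocal_x$ unchanged, and \emph{shrink the cone}: choose an open $\Ucal' \subseteq \Sbb^n$ with $\emptyset \neq \cl(\Ucal') \subseteq \Ucal$, which is possible since $\Ucal$ is open and nonempty. Setting $\Kcal'_x \defequal \pointplusset{x}{\Rbb_+^*\Ucal'}$ and $\Ecal'_x \defequal \Kcal'_x \cap \Ocal_x$, one has $\Ecal'_x \subseteq \Kcal_x \cap \Ocal_x \subseteq \Scal$, so the first requirement of~\cite[Definition~4.1]{ViCu2012} for $\Scal^c$ is met. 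To manufacture the angle, I would define $\theta \defequal \min\{\Theta(u,v) : u \in \cl(\Ucal'),\ v \in \Sbb^n \setminus \Ucal\}$; as $\cl(\Ucal')$ and $\Sbb^n \setminus \Ucal$ are disjoint compact subsets of $\Sbb^n$ and $\Theta$ is continuous and vanishes only on coincident unit vectors, this minimum is attained and strictly positive.

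It then remains to verify the angular condition on $\Ecal'_x \times (\Scal^c \cap \Ocal_x \setminus \{x\})$. For $e \in \Ecal'_x$ one has $e - x \in \Rbb_+^*\Ucal'$ (in particular $e \neq x$, since $x \notin \Kcal'_x$), so $(e-x)/\norm{e-x} \in \Ucal' \subseteq \cl(\Ucal')$. For $a \in \Scal^c \cap \Ocal_x \setminus \{x\}$, the point $a$ lies in $\Ocal_x$ but not in $\Scal$, hence $a \notin \Kcal_x \cap \Ocal_x$ and therefore $a \notin \Kcal_x$, which forces $(a-x)/\norm{a-x} \in \Sbb^n \setminus \Ucal$. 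The definition of $\theta$ then gives $\Theta(e-x,a-x) \geq \theta$, completing the exterior cone property for $\Scal^c$.

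I expect the heart of the argument to be this last construction of the angle $\theta$: one must observe that, seen from a boundary point $x$, every complement point in $\Ocal_x$ points in a direction lying outside $\Ucal$, and then quantify the angular gap between the shrunken cone $\Ucal'$ and $\Sbb^n \setminus \Ucal$ by a compactness-and-continuity argument. The two minor technical points to keep in sight are that $x \notin \Kcal_x$ (ensuring every relevant unit direction is well defined) and the elementary identity $\partial\Scal = \partial(\Scal^c)$ (ensuring the quantification over boundary points matches on both sides); everything else is a routine verification of the inclusions.
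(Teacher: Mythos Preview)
Your proof is correct and follows the same strategy as the paper's: the reverse implication simply drops the angular condition, and the forward implication shrinks the direction set $\Ucal$ to an open $\Ucal'$ with $\cl(\Ucal') \subseteq \Ucal$ so as to manufacture a positive angle $\theta$ by compactness. Your verification of the angular bound is in fact slightly more direct than the paper's---the paper introduces an auxiliary point $y \in \partial\Kcal_x \cap [e,a]$ and appeals to additivity of angles along the segment, whereas you observe immediately that $(a-x)/\norm{a-x} \in \Sbb^n \setminus \Ucal$ and conclude from the definition of $\theta$.
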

\begin{proof}
    Denote by~$\mathrm{(ICP)}$ and~$\mathrm{(ECP)}$ the interior and exterior cone properties stated in Definition~\ref{definition:ICP}, and by~$\mathrm{[ECP]}$ the exterior cone property stated in~\cite[Definition~4.1]{ViCu2012}.
    A set has the~$\mathrm{(ICP)}$ if and only if its complement has the~$\mathrm{(ECP)}$, so we only need to prove that the~$\mathrm{(ECP)}$ is equivalent to the~$\mathrm{[ECP]}$.
    Denote by~$[x,y] \defequal \{x+t(y-x) : t \in [0,1]\}$, for all~$(x,y) \in (\Rbb^n)^2$.
    Let~$\Scal \subseteq \Rbb^n$ and~$\Scal^c \defequal (\Rbb^n \setminus \Scal)$.
    
    Assume that~$\Scal$ has the~$\mathrm{[ECP]}$.
    For all~$x \in \partial\Scal^c = \partial\Scal$, the~$\mathrm{[ECP]}$ of~$\Scal$ applied to~$x$ provides the cone~$\Kcal_x$ and the neighborhood~$\Ocal_x$ such that~$(\Kcal_x \cap \Ocal_x) \subseteq \Scal^c$.
    Then~$\Scal^c$ has the~$\mathrm{(ICP)}$, so~$\Scal$ has the~$\mathrm{(ECP)}$.
    Thus the~$\mathrm{[ECP]}$ implies the~$\mathrm{(ECP)}$.
    
    Assume that~$\Scal$ has the~$\mathrm{(ECP)}$.
    Let~$x \in \partial\Scal$.
    The~$\mathrm{(ECP)}$ of~$\Scal$ applied to~$x$ provides~$\emptyset \neq \Ucal \subsetneq \Sbb^n$ open in~$\Sbb^n$ and~$\Kcal \defequal \Rbb_+^*\Ucal$ and the neighborhood~$\Ocal$ of~$0$ such that~$(\Kcal_x \cap \Ocal_x) \subseteq \Scal^c$, where~$\Kcal_x \defequal \pointplusset{x}{\Kcal}$ and~$\Ocal_x \defequal \pointplusset{x}{\Ocal}$.
    Let~$\theta > 0$ small enough so that the open set~$\Ucal' \defequal \{u \in \Ucal : \Theta(u,v) > \theta,~ \forall v \in \Sbb^n \setminus \Ucal\}$ is nonempty.
    Define~$\Kcal' \defequal \Rbb_+^*\Ucal'$ and~$\Kcal'_x \defequal \pointplusset{x}{\Kcal'} \subseteq \Kcal_x$ and~$\Ecal_x \defequal (\Kcal'_x \cap \Ocal_x)$ and~$\Scal_x \defequal (\Scal \cap \Ocal_x \setminus \{x\})$.
    Then~$\Ecal_x \subseteq \Scal^c$ and~$\theta$ satisfies the requirement in the~$\mathrm{[ECP]}$.
    Indeed,~$\Ecal_x \subseteq \Kcal_x$ while~$\Scal_x \cap \Kcal_x = \emptyset$, thus for all~$(e,a) \in \Ecal_x \times \Scal_x$ there exists~$y \in \partial\Kcal_x \cap [e,a] \neq \emptyset$.
    Since~$e$ and~$y$ and~$a$ belong to the same line, we get~$\Theta(e-x,a-x) = \Theta(e-x,y-x) + \Theta(y-x,a-x) = \Theta(\frac{e-x}{\textnorm{e-x}},\frac{y-x}{\textnorm{y-x}}) + \Theta(y-x,a-x) \geq \theta + 0$.
    The first term is greater than~$\theta$ since~$\frac{e-x}{\textnorm{e-x}} \in \Ucal'$ while~$\frac{y-x}{\textnorm{y-x}} \in \Sbb^n \setminus \Ucal$, and the second term is positive by definition of~$\Theta$.
    Thus,~$\Scal$ has the~$\mathrm{[ECP]}$ at~$x$.
    Hence the~$\mathrm{(ECP)}$ implies the~$\mathrm{[ECP]}$.
\end{proof}

\begin{proposition}
    \label{proposition:AuBoBo_implies_our_technical}
    If~$\Scal \subseteq \Rbb^n$ has the ICP from Definition~\ref{definition:ICP}, then~$\Scal$ is ample.
    The reciprocal is not true.
\end{proposition}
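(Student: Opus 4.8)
The plan is to prove the two halves separately. For the forward implication (ICP $\implies$ ample), I would take an arbitrary $x \in \Scal$ and show $x \in \cl(\int(\Scal))$. If $x \in \int(\Scal)$ this is immediate, so the only case of interest is $x \in \partial\Scal$. There I would invoke the ICP to obtain a nonempty $\Ucal \subseteq \Sbb^n$ open in the induced topology, its generated cone $\Kcal \defequal \Rbb_+^*\Ucal$, and an open neighborhood $\Ocal$ of $0$ such that $(\Kcal_x \cap \Ocal_x) \subseteq \Scal$, where $\Kcal_x \defequal \pointplusset{x}{\Kcal}$ and $\Ocal_x \defequal \pointplusset{x}{\Ocal}$.

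The first key step is the topological observation that $\Kcal$ is open in $\Rbb^n$: via the homeomorphism $(t,u) \mapsto tu$ between $\Rbb_+^*\times\Sbb^n$ and $\Rbb^n\setminus\{0\}$, the set $\Rbb_+^*\times\Ucal$ is open and maps onto $\Kcal$, so $\Kcal$ is open in $\Rbb^n\setminus\{0\}$; since $0 \notin \Kcal$, it is open in $\Rbb^n$. Consequently $\Kcal_x \cap \Ocal_x$ is open, and being contained in $\Scal$ it lies in $\int(\Scal)$. The second step is to produce interior points converging to $x$: fixing any $u \in \Ucal$, one has $tu \in \Kcal$ for every $t > 0$ and $tu \in \Ocal$ once $t$ is small, so $x + tu \in \Kcal_x \cap \Ocal_x \subseteq \int(\Scal)$; letting $t \to 0^+$ yields $x + tu \to x$, whence $x \in \cl(\int(\Scal))$. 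This establishes ampleness.

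For the reciprocal, I would exhibit an ample set failing the ICP, the natural candidate being a cusp, e.g.\ $\Scal \defequal \{(x_1,x_2) \in \Rbb^2 : \textabs{x_2} \leq x_1^2\}$. Ampleness is easy: the interior is $\{\textabs{x_2} < x_1^2\}$, and every boundary point—including the origin, reached along the interior points $(t,0)$ as $t \to 0^+$—is a limit of interior points. The ICP fails at the origin: any admissible $\Ucal$ is nonempty and open in $\Sbb^1$, hence contains a direction $u = (\cos\phi,\sin\phi)$ with $\sin\phi \neq 0$; along this direction $tu$ violates $\textabs{t\sin\phi} \leq t^2\cos^2\phi$ for all small $t > 0$, so every neighborhood $\Ocal$ of the origin contains points $tu \in \Kcal \cap \Ocal$ outside $\Scal$, contradicting $(\Kcal_x \cap \Ocal_x) \subseteq \Scal$.

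The routine parts are the two ampleness verifications and the elementary inequality in the counterexample. The step demanding the most care is the topological claim that the cone generated by a set open in $\Sbb^n$ is open in $\Rbb^n$, which underlies the passage from $\Kcal_x \cap \Ocal_x \subseteq \Scal$ to $\Kcal_x \cap \Ocal_x \subseteq \int(\Scal)$; once this is secured, the convergence $x + tu \to x$ closes the forward direction without difficulty.
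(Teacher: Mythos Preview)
Your proof is correct and follows essentially the same approach as the paper: the forward direction hinges on the same key observation that $\Kcal$ is open in $\Rbb^n$ via the homeomorphism $(t,u) \mapsto tu$ from $\Rbb_+^*\times\Sbb^n$ to $\Rbb^n\setminus\{0\}$, yielding $\Kcal_x\cap\Ocal_x\subseteq\int(\Scal)$ and hence $x\in\cl(\int(\Scal))$. Your counterexample $\{\textabs{x_2}\leq x_1^2\}$ differs from the paper's $\mathrm{epi}(\sqrt{\textabs{\cdot}})$ but exploits the same cusp phenomenon at the origin, and your explicit treatment of the case $x\in\int(\Scal)$ is slightly more careful than the paper's, which invokes the ICP directly for all $x\in\Scal$ without separating that trivial case.
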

\begin{proof}
    Let~$\Scal \subseteq \Rbb^n$ having the ICP.
    Let~$x \in \Scal$.
    The ICP provides~$\Kcal_x$ and~$\Ocal_x$ satisfying~$(\Kcal_x \cap \Ocal_x) \subseteq \Scal$ and~$x \in \cl(\Kcal_x \cap \Ocal_x)$.
    Moreover,~$\Kcal$ is open as the image of~$\Rbb_+^* \times \Ucal$ (an open subset of~$\Rbb_+^* \times \Sbb^n$) by the homeomorphism~$(\lambda,u) \in \Rbb_+^* \times \Sbb^n \mapsto \lambda u \in \Rbb^n \setminus \{0\}$.
    Thus,~$\Kcal_x \cap \Ocal_x$ is also open, so~$(\Kcal_x \cap \Ocal_x) \subseteq \int(\Scal)$.
    Hence,~$x \in \cl(\int(\Scal))$, which proves the direct implication.
    To observe that the reciprocal implication fails, consider~$n \defequal 2$ and~$\Scal \defequal \mathrm{epi}(\sqrt{\textabs{\cdot}})$.
    Then~$\Scal$ is ample but the ICP fails at~$x = (0,0) \in \cl(\Scal)$.
\end{proof}

\clearpage\newpage
\bibliography{bibliography.bib}

\end{document}